\newcolumntype{d}[1]{D{.}{.}{#1}}
\theoremstyle{plain}
\newtheorem{theorem}{Theorem}[section]
\newtheorem{lemma}[theorem]{Lemma}
\newtheorem{proposition}[theorem]{Proposition}
\theoremstyle{nonumberplain}
\newtheorem{proof}{Proof}
\titlespacing*{\paragraph}{0pt}{0.5em}{0.5em}
\titleformat{\paragraph}[runin]
{\normalfont\normalsize\bfseries}{\theparagraph}{1em}{}
\newcommand{\bbN}{\mathbb{N}}
\newcommand{\bbZ}{\mathbb{Z}}
\newcommand{\bbR}{\mathbb{R}}
\newcommand{\I}{\mathrm{i}} 
\DeclarePairedDelimiter{\floor}{\lfloor}{\rfloor}
\newcommand{\distTo}[1][]{\xrightarrow[#1]{\:\smash{\textup{d}}\:}}
\newcommand{\fidiTo}[1][]{
    \xrightarrow[#1]{%
        \smash{%
            \mathcal{L}-\textup{f}%
        }
    }
}
\newcommand{\probTo}[1][]{%
    \xrightarrow{\:\p\:}
}
\newcommand{\1}{\mathbf{1}}
\newcommand{\asTo}[1][]{\xrightarrow[#1]{\textup{a.s.}}}
\newcommand{\p}{\mathbb{P}}
\newcommand{\E}{\mathbb{E}}
\DeclareMathOperator{\cov}{Cov}
\DeclareMathOperator{\var}{Var}
\newcommand{\isp}{\,} 
\newcommand{\di}{\textup{d}} 
\newcommand{\Di}{\isp\di}
\DeclareMathOperator*{\argmin}{argmin}
\DeclarePairedDelimiter{\abs}{\lvert}{\rvert}
\DeclarePairedDelimiter{\norm}{\lVert}{\rVert}
\newcommand{\cG}{\mathcal{G}}
\newcommand{\cL}{\mathcal{L}}
\begin{document}
\title{A Minimal Contrast Estimator for the Linear Fractional Stable
  Motion}%
\author{%
  Mathias Mørck Ljungdahl%
  \thanks{%
    Department of Mathematics, Aarhus University, 
    ljungdahl@math.au.dk.%
  }%
  \and%
  Mark Podolskij%
  \thanks{%
    Department of Mathematics, Aarhus University,
    mpodolskij@math.au.dk.%
  }%
}

\maketitle

\begin{abstract}
  \noindent In this paper we present an estimator for the
  three-dimensional parameter $(\sigma, \alpha, H)$ of the linear
  fractional stable motion, where $H$ represents the self-similarity
  parameter, and $(\sigma, \alpha)$ are the scaling and stability
  parameters of the driving symmetric L\'evy process $L$.  Our
  approach is based upon a minimal contrast method associated with the
  empirical characteristic function combined with a ratio type
  estimator for the self-similarity parameter $H$. The main result
  investigates the strong consistency and weak limit theorems for the
  resulting estimator.  Furthermore, we propose several ideas to
  obtain feasible confidence regions in various parameter settings.
  Our work is mainly related to \cite{LjunPara,MazuEsti}, in which
  parameter estimation for the linear fractional stable motion and
  related L\'evy moving average processes has been studied.

\bigskip

\noindent\textit{Keywords:} linear fractional processes, Lévy
processes, limit theorems, parametric estimation, bootstrap,
subsampling, self-similarity, low frequency.

\bigskip

\noindent\textit{AMS 2010 subject classifications:} Primary 60G22,
62F12, 62E20; secondary 60E07, 60F05, 60G10.
\end{abstract}

\section{Introduction}

\noindent During the last sixty years fractional stochastic processes
have received a great deal of attention in probability, statistics and
integration theory. One of the most prominent examples of a fractional
model is the (scaled) fractional Brownian motion (fBm), which gained a
lot of popularity in science since the pioneering work of Mandelbrot
and van Ness \cite{MandFrac}. The scaled fBm is the unique zero mean
Gaussian process with stationary increments and self-similarity
property.  As a building block in stochastic models it found numerous
applications in natural and social sciences such as physics, biology
and economics.  From the statistical perspective the scaled fBm is
fully determined by its scale parameter $\sigma >0$ and the
self-similarity parameter (or Hurst index) $H \in (0, 1)$. Nowadays,
the estimation of $(\sigma, H)$ is a well understood problem. We refer
to \cite{DahlEffi} for efficient estimation of the Hurst parameter $H$
in the low frequency setting, and to \cite{BF,CI,IL} for the
estimation of $(\sigma, H)$ in the high frequency setting, among many
others. In more recent papers \cite{BS,LP} statistical inference for
the multifractional Brownian motion has been investigated, which
accounts for the time varying nature of the Hurst parameter.

This paper focuses on another extension of the fBm, the \textit{linear
  fractional stable motion} (lfsm).  The lfsm $(X_t)_{t \geq 0}$ is a
three-parameter statistical model defined by
\begin{equation}
\label{eq:LFM}
X_t = \int_{\bbR} \{ (t - s)_+^{H - 1/\alpha} - (-s)_+^{H - 1/\alpha}\} \Di L_s, 
\end{equation}
where $x_+ = x \vee 0$ denotes the positive part and we set
$x_+^a = 0$ for all $a \in \bbR$, $x \leq 0$. Here
$(L_t)_{t \in \bbR}$ is a symmetric $\alpha$-stable L\'evy process
with $\alpha \in (0, 2)$ and scale parameter $\sigma > 0$, and
$H \in (0,1)$ represents the Hurst parameter. In some sense the lfsm
is a non-Gaussian analogue of fBm.  The process $(X_t)_{t \geq 0}$ has
symmetric $\alpha$-stable marginals, stationary increments and it is
self-similar with parameter $H$. It is well known that the process $X$
has continuous paths when $H - 1/\alpha > 0$, see
e.g. \cite{BenaRoug}. We remark that the class of stationary
increments self-similar processes becomes much larger if we drop the
Gaussianity assumption, cf. \cite{PT,R}, but the lfsm is one of its
most famous representatives due to the ergodicity property.  Linear
fractional stable motions are often used in natural sciences, e.g. in
physics or Internet traffic, where the process under consideration
exhibits stationarity and self-similarity along with heavy tailed
marginals, see e.g. \cite{GLT} for the context of Ethernet and solar
flare modelling.

The limit theory for statistics of lfsm, which is indispensable for
the estimation of the parameter $\xi = (\sigma, \alpha, H)$, turns out
to be of a quite complex nature. First central limit theorems for
partial sums of bounded functions of L\'evy moving average processes,
which in particular include the lfsm, have been discussed in
\cite{PT2} and later extended in \cite{PTA} to certain unbounded
functions. In a more recent work \cite{BassPowe} the authors presented
a rather complete asymptotic theory for power variations of stationary
increments L\'evy moving average processes. Finally, the results of
\cite{BassPowe} have been extended to general functions in
\cite{BassOnLi}, who demonstrated that the weak limit theory crucially
depends on the \textit{Appell rank} of the given function and the
parameters of the model (all functions considered in this paper have
Appell rank 2).  More specifically, they obtained three different
asymptotic regimes, a normal and two stable ones, depending on the
particular setting.  It is this phase transition that depends on the
parameter $(\alpha, H)$ which makes the statistical inference for lfsm
a rather complicated matter.

Since the probabilistic theory for functionals of lfsm was not well
understood until the recent work \cite{BassOnLi,BassPowe}, the
statistical literature on estimation of lfsm is rather scarce. The
articles \cite{AyacLine,PTA} investigate the asymptotic theory for a
wavelet-based estimator of $H$ when $\alpha \in (1, 2)$. In
\cite{BassPowe, SPT} the authors use power variation statistics to
obtain an estimator of $H$, but this method also requires the a priori
knowledge of the lower bound for the stability parameter $\alpha$. The
work \cite{DI} suggested to use negative power variations to get a
consistent estimator of $H$, which applies for any $\alpha \in (0,2)$,
but this article does not contain a central limit theorem for this
method. The paper \cite{MazuEsti} was the first instance, where
estimation of the full parameter $\xi = (\sigma, \alpha, H)$ has been
studied in low and high frequency settings. Their idea is based upon
the identification of $\xi$ through power variation statistics and the
empirical characteristic function evaluated at two different values.

In this paper we aim at extending the approach of \cite{MazuEsti} by
determining the asymptotic theory for the minimal contrast estimator
of the parameter $\xi = (\sigma, \alpha, H)$, which is based upon the
comparison of the empirical characteristic function with its
theoretical counterpart under low frequency sampling. Indeed, the
choice of the two evaluation points for the empirical characteristic
function in \cite{MazuEsti} is rather ad hoc and we will show in the
empirical study that the minimal contrast estimator exhibits better
finite sample properties and robustness in various settings.
Similarly to \cite{MazuEsti}, we will show that the weak limit theory
for our estimator has a normal and a stable regime, and the asymptotic
distribution depends on the interplay between the parameters $\alpha$
and $H$. At this stage we remark that the minimal contrast approach
has been investigated in \cite{LjunPara} in the context of certain
L\'evy moving average models, which do not include the lfsm or its
associated noise process, but only in the asymptotically normal
regime. Another important contribution of our paper is the subsampling
procedure, which provides confidence regions for the parameters of the
model irrespectively of the unknown asymptotic regime.

The article is organized as follows. In
Section~\ref{sec:weak-limit-thm} we introduce the necessary notation
and formulate a new weak limit result related to \cite{MazuEsti},
which is central to their parameter estimation for the linear
fractional stable motion. The aforementioned theorem will be our
starting point, where the aim is to extend the convergence of the
finite dimensional distributions to convergence of integral
functionals appearing in the minimal contrast
method. Section~\ref{sec:results} introduces the estimator and
presents the main results of strong consistency and asymptotic
distribution. Section~\ref{sec:simulation} is devoted to a simulation
study, which tests the finite sample performance of the minimal
contrast estimator. We also discuss the parametric bootstrap and the
subsampling method that are used to construct feasible confidence
regions for the true parameters of the model. All proofs are collected
in Section~\ref{sec:proofs} and all larger tables are in
Section~\ref{sec:tables}.

\section{Notation and recent results}
\label{sec:weak-limit-thm}

We start out with introducing the main notation and statistics of
interest. We consider low frequency observations $X_1, X_2, \ldots, X_n$
from the lfsm $(X_t)_{t \geq 0}$ introduced in \eqref{eq:LFM}. We denote by
$\Delta_{i, k}^{r} X$ ($i, k, r \in \bbN$) the $k$th order increment
of $X$ at stage $i$ and rate $r$, i.e.
\begin{equation*}
\Delta_{i, k}^{r} X = \sum_{j = 0}^{k} (-1)^j \binom{k}{j} X_{i - rj}, \qquad i \geq r k.
\end{equation*}
The order $k$ plays a crucial role in determining the asymptotic
regime for statistics that we introduce below. We let the function
$h_{k, r} : \bbR \to \bbR$ denote the $k$th order increment at rate
$r$ of the kernel in \eqref{eq:LFM}, specifically
\begin{equation*}
h_{k, r}(x) = \sum_{j = 0}^{k} (-1)^{j} \binom{k}{j} (x - rj)_+^{H - 1/\alpha}, \qquad x \in \bbR.
\end{equation*}
We note that
$\Delta_{i, k}^{r} X = \int_{\smash{\bbR}} h_{k, r} (i-s) \Di L_s$.
For less cumbersome notation we drop the index $r$ if $r = 1$, so
$\Delta_{i, k} X \coloneqq \smash{\Delta_{i, k}^{1}} X$ and
$h_k = h_{k, 1}$. Throughout this paper we write
$\theta = (\sigma, \alpha)$ and $\xi = (\sigma, \alpha, H)$. The main
probabilistic tools are statistics of the type
\begin{equation*}
V_n(f, k , r) = \frac{1}{n} \sum_{i = rk}^{n} f(\Delta_{i, k}^{r} X),
\end{equation*}
where $f : \bbR \to \bbR$ is a Borel function satisfying
$\E[|f(\Delta_{rk, k}^{r} X)|]<\infty$. We will specifically focus on
two classes of functions, namely $f_p(x) = |x|^p$ for $p \in (-1,1)$
and $\delta_t(x) = \cos(t x)$ for $t\geq 0$. They correspond to power
variation statistics and the real part of the empirical characteristic
function respectively, and we use the notation
\begin{equation}%
\label{statistics}
\varphi_n(t) = V_n(\delta_t, k, 1) \qquad \text{and} \qquad \psi_n(r) = V_n(f_p, k, r).
\end{equation}
We note that Birkhoff's
ergodic theorem implies the almost sure convergence 
\begin{equation}
\label{eq:empirical-characteric-conv}
\varphi_n(t) \asTo \varphi_{\xi}(t) \coloneqq \exp(-| \sigma \|h_k\|_\alpha t|^{\alpha})
\qquad \text{where} \qquad
\|h_k\|_\alpha^{\alpha} \coloneqq \int_{\mathbb R} |h_k(x)|^{\alpha} \Di x.
\end{equation}
An important coefficient in our context is 
\begin{equation} \label{defbeta}
  \beta = 1 + \alpha (k - H).
\end{equation}
The rate of convergence and the asymptotic distribution of statistics
defined at \eqref{statistics} crucially depend on whether the
condition $k > H + 1/\alpha$ is satisfied or not. Hence, we define the
normalised versions of our statistics as
\begin{equation*}
  W_n^1(r) = \sqrt{n} (\psi_n(r) - r^H m_{p, k}), \quad
  W_n^2(t) = \sqrt{n} (\varphi_n(t) - \varphi_{\xi}(t)) \quad \text{when $k > H + 1 / \alpha$}
\end{equation*}
and
\begin{equation*}
  S_n^1(r) = n^{1 - 1/\beta} (\psi_n(r) - r^H m_{p, k}), \quad
  S_n^2(t) = n^{1 - 1/\beta} (\varphi_n(t) - \varphi_{\xi}(t)) \quad \text{when $k < H + 1 / \alpha$}.
\end{equation*}
Here $m_{p, k} = \E[|\Delta_{k, k} X|^p]$, which is finite for any
$p \in (-1, \alpha)$, and $\varphi_{\xi}$ is given at
\eqref{eq:empirical-characteric-conv}. Note that
$\E[|\Delta_{\smash{r k, k}}^r X|^p] = r^H m_{\smash{p, k}}$, which
explains the centring of $W^1$ and $S^1$.

It turns out that the finite dimensional limit of the statistics
$(W_n^1, W_n^2)$ is Gaussian while the corresponding limit of
$(S_n^1, S_n^2)$ is $\beta$-stable (see Theorem~\ref{thm:weak-limit}
below). We now introduce several notations to describe the limiting
distribution. We start with the Gaussian case. For random variables
$X = \int_{\bbR} g(s) \Di L_s$ and $Y = \int_{\bbR} h(s) \Di L_s$ with
$\|g\|_{\alpha}, \|h\|_{\alpha} < \infty$ we define a dependence
measure $U_{g, h} : \bbR^2 \to \bbR$ as
\begin{equation}
\label{eq:dependence-measure}
\begin{aligned}
  U_{g, h}(u, v) &= \E[\exp(\I (u X + v Y))] - \E[\exp(\I u X)] \E[\exp(\I v Y)]
  \\
                 &= \exp(-\sigma^{\alpha}\| u g + v h\|_{\alpha}^{\alpha}) - \exp(-\sigma^{\alpha}(\| u g \|_{\alpha}^{\alpha} + \| v h\|_{\alpha}^{\alpha})).
\end{aligned}
\end{equation}
Next, for $p \in (-1,1) \setminus \{0\}$, we introduce the constant
\begin{equation}%
\label{ap}%
a_p \coloneqq 
\begin{cases}
  \int_{\bbR} (1 - \cos(y)) |y|^{-1 - p} \Di y: & p \in (0,1)
  \\[1.5 ex]
  \sqrt{2 \pi} \Gamma(-p/2)/ 2^{p + 1/2} \Gamma((p+1)/2): & p \in
  (-1,0),
\end{cases}
\end{equation}
where $\Gamma$ denotes the Gamma function. Now, for each
$t \in \bbR_+ $, we set
\begin{equation}
  \label{Sigmadef}%
\begin{aligned}%
  \Sigma_{11} (g, h) &= a_p^{-2}
                       \int_{\bbR^2} |xy|^{-1-p}U_{g,h}(x,y) \Di x \Di y,
  \\
  \Sigma_{12} (g, h;t) &= a_p^{-1} \int_{\bbR^2}
                         |y|^{-1-p}U_{g,h}(x,t) \Di x
\end{aligned}
\end{equation}
whenever the above integrals are finite. As it has been shown in
\cite{MazuEsti}, the following identities hold for any
$p \in (-1/2, 1/2) \setminus \{0\}$ with $p<\alpha/2$ and
$t \in \bbR_+$:
\begin{equation*}
  \cov(|X|^p, |Y|^p) = \Sigma_{11} (g, h)
  \qquad \text{and} \qquad
  \cov(|X|^p, \exp(\I tY))= \Sigma_{12} (g, h;t).
\end{equation*}
Obviously, the quantities $\Sigma_{11} (g, h)$ and
$\Sigma_{12} (g, h; t)$ will appear in the asymptotic covariance
kernel of the vector $(\psi_n(r), \varphi_n(t))$ in the normal regime.

Now, we introduce the necessary notations for the stable case. First,
we define the functions
\begin{equation}
  \label{eq:Phi-map}
\begin{aligned}
  \Phi_r^{1}(x) &\coloneqq a_p^{-1} \int_{\bbR} (1 - \cos(u x) ) \exp(-|\sigma \|h_{k,r}\|_{\alpha} u|^{\alpha}) |u|^{-1-p} \Di u, \qquad r \in \bbN,
  \\
  \Phi_t^{2}(x) &\coloneqq (\cos(t x) - 1) \exp(-|\sigma \|h_k\|_{\alpha} t|^{\alpha}), \qquad t \geq 0,
\end{aligned}
\end{equation}
and set
\begin{equation*}
  q_{H, \alpha, k}\coloneqq \prod_{i = 0}^{k - 1} (H - 1 / \alpha - i).
\end{equation*}
Next, we introduce the functions $\kappa_1 : \bbN \to \bbR_+$ and
$\kappa_2 : \bbR \to \bbR_-$ via
\begin{equation}
\label{eq:defkappas}
  \kappa_1(r) \coloneqq \frac{\alpha}{\beta} \int_{0}^{\infty} \Phi_r^{1}(q_{H, \alpha, k} z) z^{-1 - \alpha/\beta} \Di z, \quad 
  \kappa_2(t) \coloneqq \frac{\alpha}{\beta} \int_{0}^{\infty} \Phi_t^{2}(  q_{H, \alpha, k} z) z^{-1 - \alpha/\beta} \Di z.
\end{equation}
In the final step we will need to define two L\'evy measures $\nu_1$
on $(\bbR_+)^2$ and $\nu_2$ on $\bbR_+$ that are necessary to
determine the asymptotic distribution of $(S_n^1, S_n^2)$. Let us
denote by $\nu$ the L\'evy measure of the symmetric $\alpha$-stable
L\'evy motion $L$ (i.e. $\nu(\di x)=c(\sigma) |x|^{-1-\alpha} \Di x$)
and define the mappings $\tau_1: \bbR \to (\bbR_+)^2$ and
$\tau_2 : \bbR \to \bbR_+$ via
\begin{equation*}
  \tau_1(x) = |x|^{\alpha/\beta} (\kappa_1(1), \kappa_1(2)), \qquad \tau_2(x) = |x|^{\alpha/\beta}.
\end{equation*}
Then, for any Borel sets $A_1 \subseteq (\bbR_+)^2$ and
$A_2 \subseteq \bbR_{+}$ bounded away from $(0, 0)$ and $0$,
respectively, we introduce
\begin{equation}
\label{defnu}
\nu_l(A_l)\coloneqq \nu(\tau_l^{-1}(A_l)), \qquad l=1,2.  
\end{equation} 
In the weak limit theorem below we write $Z^n \fidiTo Z$ to denote the
convergence of finite dimensional distributions, i.e. the convergence
in distribution
\begin{equation*}
  (Z^n_{t_1}, \ldots, Z^n_{t_d}) \distTo (Z_{t_1}, \ldots, Z_{t_d})
\end{equation*}
for any $d \in \bbN$ and $t_j \in \bbR_+$. The following theorem is
key for statistical applications.

\begin{theorem}
\label{thm:weak-limit}
Assume that either $p \in (-1/2, 0)$ or $p \in (0, 1/2)$ together with
$p < \alpha / 2$.
\begin{enumerate}[label = (\roman{*})]
\item\label{it:thm:weak-limit:1} If $k > H + 1/\alpha$ then as
  $n \to \infty$
  \begin{equation*}
    (W_n^1(1), W_n^1(2), W_n^2(t)) \fidiTo (W^1_1, W^1_2, W_t),
  \end{equation*}
  where $W^1 = (W^1_1, W^1_2)$ is a centered $2$-dimensional normal
  distribution and $(W_t)_{t \geq 0}$ is a centred Gaussian process
  with
  \begin{align*}
    \cov(W^1_j, W^1_{j'}) &=  \sum_{l \in \bbZ} \Sigma_{11}(h_{k,j}, h_{k,j'}(\cdot+l)) \qquad j,j'=1,2,
    \\
    \cov(W^1_j, W_t) &=  \sum_{l \in \bbZ} \Sigma_{12}(h_{k,j}, h_{k}(\cdot+l);t) \qquad j=1,2,~ t \in \bbR_+,
    \\
    \cov(W_s, W_{t}) &=  \frac{1}{2} \sum_{l \in \bbZ} \bigl(U_{h_{k}, h_{k}(\cdot + l)} (s,t)
                       + U_{h_{k},-h_{k}(\cdot+l)} (s, t) \bigr) \qquad s,t \in \bbR,
  \end{align*}
  where the quantity $\Sigma_{ij}$ has been introduce at
  \eqref{Sigmadef}. Moreover, the Gaussian process $W$ exhibits a
  modification (denoted again by $W$), which is locally H\"older
  continuous of any order smaller than $\alpha/4$.

\item\label{it:thm:weak-limit:2} If $k < H + 1/\alpha$ then as
  $n \to \infty$
  \begin{equation*}
    (S_n^1(1), S_n^1(2), S_n^2(t)) \fidiTo (S^1_1, S^1_2, \kappa_2(t)S).
  \end{equation*}
  where $S^1 = (S^1_1, S^1_2)$ is a $\beta$-stable random vector with
  L\'evy measure $\nu_1$ independent of the totally right skewed
  $\beta$-stable random variable $S$ with L\'evy measure $\nu_2$, and
  $\nu_1,\nu_2$ have been defined in \eqref{defnu}.
\end{enumerate}
\end{theorem}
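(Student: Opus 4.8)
The plan is to deduce both parts from the general limit theory for the ergodic averages $V_n(f,k,r)$ established in \cite{BassPowe,BassOnLi}, together with the constant computations of \cite{MazuEsti} that identify $\Sigma_{11}$, $\Sigma_{12}$, $\kappa_1$, $\kappa_2$; the genuinely new elements are the joint treatment of the three coordinates (and of arbitrarily many evaluation points $t$) and the path regularity of the limiting process $W$. First I would record that the functions in play fall under that theory: $f_p$ with $p\in(-1/2,0)\cup(0,1/2)$, $p<\alpha/2$, and $\delta_t=\cos(t\,\cdot)$ are even, so their first Appell coefficient relative to the symmetric $\alpha$-stable law vanishes while the second does not, i.e.\ they have Appell rank $2$; $\delta_t$ is bounded and smooth, $\E|f_p(\Delta_{rk,k}^{r}X)|<\infty$ because $p<\alpha$, and for some $\delta>0$ also $\E|f_p(\Delta_{rk,k}^{r}X)|^{2+\delta}<\infty$ (since $p(2+\delta)<\alpha$ for $p>0$, resp.\ $|p|(2+\delta)<1$ for $p<0$). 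This gives the one-dimensional convergence of each coordinate, and the joint convergence follows by the Cram\'er--Wold device: a generic linear combination $\lambda_1 W_n^1(1)+\lambda_2 W_n^1(2)+\sum_m\mu_m W_n^2(t_m)$ is, up to $O(1)$ boundary terms coming from the unequal summation ranges $i\ge rk$ (negligible after dividing by $\sqrt n$, resp.\ $n^{1/\beta}$), a single normalised ergodic average of a stationary functional of $L$ built from the kernels $h_{k,1}$, $h_{k,2}$, $h_k$, to which the arguments of \cite{BassPowe,BassOnLi} apply unchanged; the covariance, resp.\ L\'evy measure, of the limit is then read off term by term.

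For part \ref{it:thm:weak-limit:1} one has $\beta=1+\alpha(k-H)>2$. The central limit theorem is obtained through an $m$-dependent approximation: truncating every kernel $h_{k,r}$ to $[0,m]$ makes $(\Delta_{i,k}^{r}X)_i$ an $m$-dependent stationary array (distinct increments then involve disjoint parts of $L$), a Lindeberg/Lyapunov CLT applies by the $(2+\delta)$-th moment bound, and one lets $m\to\infty$ using the tail asymptotics $h_{k,r}(y)\sim q_{H,\alpha,k}\,r^{k}\,y^{H-1/\alpha-k}$ as $y\to\infty$ (the leading term of the $k$-th finite difference of the power kernel) to control the error. The limiting covariance is $\sum_{l\in\bbZ}$ of the lag-$l$ covariances of the summands; these are evaluated via the identities $\cov(|X|^p,|Y|^p)=\Sigma_{11}(g,h)$ and $\cov(|X|^p,\exp(\I tY))=\Sigma_{12}(g,h;t)$ recalled before the theorem, with $(g,h)=(h_{k,j},h_{k,j'}(\cdot+l))$ etc., while the $\cos$--$\cos$ covariance is reduced, via $\cos(sX)\cos(tY)=\tfrac12[\cos(sX-tY)+\cos(sX+tY)]$, to $\tfrac12(U_{h_k,h_k(\cdot+l)}(s,t)+U_{h_k,-h_k(\cdot+l)}(s,t))$ with $U$ as in \eqref{eq:dependence-measure}. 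Summability of the series in $l$ uses the same tail asymptotics: each lag-$l$ term is $O(|l|^{1-\beta})$ and $1-\beta<-1$ precisely when $\beta>2$. For the H\"older continuity: $W$ being centred Gaussian, by Kolmogorov--Chentsov it suffices to bound $\E[(W_s-W_t)^2]=\var(W_s)-2\cov(W_s,W_t)+\var(W_t)$; estimating the resulting differences of $U$-terms by elementary inequalities for $|\exp(-a)-\exp(-b)|$ and for $\bigl||u|^\alpha-|v|^\alpha\bigr|$ together with the triangle inequality in $L^\alpha$, and balancing these against the decay in $l$, yields $\E[(W_s-W_t)^2]\le C|s-t|^{\alpha/2}$, whence a modification locally H\"older of every order $<\alpha/4$.

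For part \ref{it:thm:weak-limit:2} one has $1<\beta<2$, and the heavy-tailed mechanism takes over. Here I would use the jump decomposition $L=L^{(\epsilon)}+\tilde L^{(\epsilon)}$, with $L^{(\epsilon)}$ collecting the jumps of size $>\epsilon$, which splits $\Delta_{i,k}^{r}X=\Delta_{i,k}^{r}X^{(\epsilon)}+\Delta_{i,k}^{r}\tilde X^{(\epsilon)}$. The steps are: (a) the normalised statistics built from $\tilde X^{(\epsilon)}$ are asymptotically negligible, uniformly as $\epsilon\to0$; (b) conditionally on the Poisson configuration, a jump of size $x$ at $s$ changes $\sum_i(f(\Delta_{i,k}^{r}X)-\E f)$ by $\sum_i\Phi_r^1(x\,h_{k,r}(i-s))$ for $f=f_p$ and by $\sum_i\Phi_t^2(x\,h_k(i-s))$ for $f=\delta_t$, to leading order, the maps $\Phi_r^1$, $\Phi_t^2$ of \eqref{eq:Phi-map} being exactly $\E[f(Z+a)-f(Z)]$ where $Z$ is distributed as the relevant increment ($\Delta_{rk,k}^{r}X$, resp.\ $\Delta_{k,k}X$) and $a$ is the deterministic shift; (c) these sums behave like $\int_{\bbR}\Phi(x\,h_{k,r}(y))\Di y$, which equals $|x|^{\alpha/\beta}$ times a constant multiple of $\kappa_1(r)$, resp.\ $\kappa_2(t)$, the exponent $\alpha/\beta=1/(k+1/\alpha-H)$ and the constant $q_{H,\alpha,k}=\prod_{i=0}^{k-1}(H-1/\alpha-i)$ arising from the change of variables $z=|h_{k,r}(y)|$ and the tail asymptotics above; thus a jump of size $x$ contributes, in the limit, $\tau_1(x)$ to the power-variation pair and $|x|^{\alpha/\beta}\kappa_2(t)$ to $\varphi_n(t)$, which is why the limiting L\'evy measures are $\nu\circ\tau_l^{-1}$, i.e.\ \eqref{defnu}, and why the third coordinate factors as $\kappa_2(t)S$ with a single scalar $S$. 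Summing over the Poisson points, rescaling time by $n$ and letting $\epsilon\to0$, the resulting infinitesimal triangular array converges, by the classical criteria for convergence of row sums to an infinitely divisible law, to the asserted $\beta$-stable vector. The main obstacle is exactly step (b)--(c): making the leading-order replacement of $\sum_i(f(\Delta_{i,k}^{r}X)-\E f)$ by $\sum_i\Phi(x\,h_{k,r}(i-s))$ rigorous---controlling the interaction between the big-jump shift and the small-jump remainder $\tilde X^{(\epsilon)}$ uniformly in $\epsilon$ and in the random number of jumps in $[0,n]$---and then reading off the exact joint L\'evy measure on $(\bbR_+)^3$, hence the asserted independence of $(S^1_1,S^1_2)$ and $S$, from the triangular-array limit. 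This is the analysis carried out for the individual statistics in \cite{BassPowe,BassOnLi} (and for power variation together with the characteristic function at fixed points in \cite{MazuEsti}); the task here is to push it through the Cram\'er--Wold reduction of the first paragraph.
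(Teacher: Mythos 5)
Your plan reconstructs far more than the paper actually proves. The paper cites \cite[Theorem~2.2]{MazuEsti} wholesale for the finite-dimensional convergence in \emph{both} regimes, including the joint law of $(W^1_1,W^1_2,W_t)$, the law of $(S^1_1,S^1_2)$, and its asymptotic independence from $S_n^2(t)$; the only new content is (a) the H\"older continuity of $W$ and (b) the identification of the limit of $(S_n^2(t_1),\dots,S_n^2(t_d))$ as $(\kappa_2(t_1),\dots,\kappa_2(t_d))S$. For (a) your argument is essentially the paper's: Kolmogorov's criterion for a Gaussian process reduced to $\E[(W_s-W_t)^2]\leq C_T|s-t|^{\alpha/2}$, proved by splitting each lag-$l$ term of the covariance series, using the bounds on $U_{g,h}$ from \cite{PTA}, the mean value theorem, the inequality $\abs{x^\alpha-y^\alpha}\leq\abs{x^2-y^2}^{\alpha/2}$, and the summability of $\rho_l$ and $\mu_l$ under $k>H+1/\alpha$. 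For (b) your route differs: you work with the big-jump/Poisson decomposition and triangular-array criteria, whereas the paper imports the decomposition $S_n^2(t)=n^{-1/\beta}\sum_i R_i(t)+(n^{-1/\beta}\sum_i Q_i(t)-\overline{S}_n(t))+\overline{S}_n(t)$ from \cite{BassOnLi}, kills the first two terms by moment bounds uniform in $t$ (Propositions~\ref{prop:R-moment} and~\ref{prop:R-moment2}, built on Lemma~\ref{lem:bounds-Phi}), and then treats $\overline{S}_n(t)$ as an i.i.d.\ heavy-tailed sum in the unit increments $L_{i+1}-L_i$, whose limit is read off from the uniform regular-variation asymptotics $\abs{x}^{-\alpha/\beta}\overline{\Phi}_t(x)\to\kappa_2(t)$. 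Your step (b)--(c) computes exactly the same function $\overline{\Phi}_t$ and the same index $\alpha/\beta$, so the two routes are morally equivalent for identifying the marginal law of the $S_n^2$ block.

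The genuine gap is the asserted independence of $(S^1_1,S^1_2)$ and $S$. You claim it can be ``read off from the triangular-array limit'' of the joint L\'evy measure on $(\bbR_+)^3$, but your own heuristic assigns to a single big jump of size $x$ the simultaneous contribution $\bigl(\abs{x}^{\alpha/\beta}\kappa_1(1),\,\abs{x}^{\alpha/\beta}\kappa_1(2),\,\abs{x}^{\alpha/\beta}\kappa_2(t)\bigr)$ to the three coordinates. Pushed through the triangular-array machinery, that concentrates the joint L\'evy measure on a single ray in $\bbR^3$ and yields \emph{complete dependence} of all three coordinates, i.e.\ a limit of the form $(\kappa_1(1)S',\kappa_1(2)S',\kappa_2(t)S')$ with one common totally skewed variable --- the opposite of the block independence stated in the theorem. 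So either your leading-order replacement is missing the mechanism that decouples the power-variation block from the characteristic-function block, or the independence must be established by a separate argument; it does not follow from the computation you describe. The paper sidesteps this entirely by citing \cite[Theorem~2.2]{MazuEsti} for the joint law of the two blocks and only proving the within-block structure of the $S_n^2$ coordinates. If you want a self-contained proof along your lines, this decoupling is the one step you must supply, and as sketched your argument would deliver the wrong joint law.
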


\noindent
The finite dimensional asymptotic distribution demonstrated in
Theorem~\ref{thm:weak-limit} is a direct consequence of
\cite[Theorem~2.2]{MazuEsti}, which even contains a more general
multivariate result. However, the smoothness property of the limiting
Gaussian process $W$ and the particular form of the limit of $S_n^2$
have not been investigated in \cite{MazuEsti}. Both properties are
crucial for the statistical analysis of the minimal contrast
estimator.

We observe that from a statistical perspective it is
more favourable to use
Theorem~\ref{thm:weak-limit}\ref{it:thm:weak-limit:1} to estimate the
parameter $\xi = (\sigma, \alpha, H)$, since the convergence rate
$\sqrt{n}$ in \ref{it:thm:weak-limit:1} is faster than the rate
$n^{1-1/\beta}$ in \ref{it:thm:weak-limit:2}. However, the phase
transition happens at the point $k=H+1/\alpha$, which depends on
unknown parameters $\alpha$ and $H$. This
poses major difficulties in statistical applications and we will
address this issue in the forthcoming discussion.

\section{Main results}
\label{sec:results}

In this section we describe our minimal contrast approach and present
the corresponding asymptotic theory. Before stating our the main
result we define a power variation based estimator of the parameter
$H \in (0, 1)$. Since the increments of the process $(X_t)_{t \geq 0}$
are strongly ergodic (cf. \cite{CambErgo}), we deduce by Birkhoff's
ergodic theorem the almost sure convergence
\begin{equation*}
  \psi_n(r) = \frac 1n \sum_{i = rk}^{n} |\Delta_{i, k}^r X|^p
  \asTo \E[|\Delta_{rk, k}^r X|^p] = r^{pH} m_{p, k}
\end{equation*}
for any $p \in (-1,\alpha)$. In
particular, we have that
\begin{equation*}
R_n(p, k) \coloneqq \frac{\psi_n(2)}{\psi_n(1)} \asTo 2^{p H},
\end{equation*}
consequently yielding a consistent estimator $H_n(p, k)$ of
$H$ as
\begin{equation}
\label{eq:estimator-H}
H_n(p, k) = \tfrac{1}{p} \log_2(R_n(p, k)) \asTo H
\end{equation}
for any $p \in (-1,\alpha)$. The idea of using negative powers
$p \in (-1,0)$ to estimate $H$, which has been proposed in \cite{DI}
and applied in \cite{MazuEsti}, has the obvious advantage that it does
not require knowledge of the parameter
$\alpha$. From Theorem~\ref{thm:weak-limit}\ref{it:thm:weak-limit:1}
and the $\delta$-method applied to the function
$v_p(x, y) = \frac{1}{p}(\log_2(x) - \log_2(y))$ we immediately deduce
the convergence
\begin{equation}
\label{eq:limit-Hn-normal}
\bigl( \sqrt{n}(H_n(p, k) - H),  W_n^2(t) \bigr) \fidiTo (M_1, W_t)
\end{equation}
for $k>H+1/\alpha$, where $M_1$ is a centred Gaussian random variable. 
Similarly, when $k < H + 1/\alpha$, we deduce the convergence
\begin{equation}
\label{eq:limit-H_n-stable}
\bigl( n^{1 - 1/\beta}(H_n(p, k) - H) ,  S_n^2(t)\bigr) \fidiTo (M_2, \kappa_2(t) S)
\end{equation}
from Theorem~\ref{thm:weak-limit}\ref{it:thm:weak-limit:2}.  

We will now introduce the minimal contrast estimator of the parameter
$\theta = (\sigma, \alpha)$. Let $w \in \cL^1(\bbR_+)$ denote a
positive weight function. Define for $r > 1$ the norm
\begin{equation*}
\| h\|_{w, r} = \Bigl(\int_{0}^{\infty} |h(t)|^r w(t) \Di t\Bigr)^{1/r},
\end{equation*}
where $h : \bbR_+ \to \bbR$ is a  Borel function. Denote by
$\cL_w^r(\bbR_+)$ the space of functions $h$ with $\|h\|_{w, r} < \infty$. Let
$(\theta_0, H_0) = (\sigma_0, \alpha_0, H_0) \in (0, \infty) \times
(0, 2) \times (0, 1)$ be  the true parameter of the model
\eqref{eq:LFM} and consider an open neighbourhood
$\Theta_0 \subseteq (0, \infty) \times (0, 2)$ around $\theta_0$
bounded away from $(0, 0)$. Define the map
$F : \cL_w^2(\bbR_+) \times (0, 1) \times \Theta_0 \to \bbR$ as
\begin{equation}
  \label{Fdef}
F(\varphi, H, \theta) = \|\varphi - \varphi_{\theta, H}\|_{w, 2}^2,
\end{equation}
where $\varphi_{\theta, H}$ is the limit introduced at
\eqref{eq:empirical-characteric-conv}. We define the minimal contrast
estimator $\theta_n$ of $\theta_0$ as
\begin{equation}
\label{eq:minimal-contrast-estimator}
\theta_n \in \argmin_{\theta \in \Theta_0} F(\varphi_n, H_n(p, k), \theta).
\end{equation}
We remark that by e.g. \cite[Theorem~2.17]{StinSome} it is possible to
choose $\theta_n$ universally measurable with respect to the
underlying probability space. The
joint estimator of $(\theta_0, H_0)$ is then given as
\begin{equation*}
  \xi_n = (\theta_n, H_n(p,k))^{\top}.
\end{equation*}
Below we denote by $\nabla_\theta$ the gradient with respect to the
parameter $\theta$ and similarly by $\nabla_\theta^2$ the Hessian. We
further write $\partial_z f$ for the partial derivative of $f$ with
respect to $z \in \{\sigma, \alpha, H\}$. The main theoretical result
of this paper is the following theorem.

\begin{theorem}
\label{thm:main-result}
Suppose $\xi_0 = (\theta_0, H_0)$ is the true parameter of the linear
fractional stable motion $(X_t)_{t \geq 0}$ at \eqref{eq:LFM} and that
the weight function $w \in \cL^1(\bbR_+)$ is continuous.
\begin{enumerate}[label = (\roman{*})]
\item\label{it:thm:main-result:1} $\xi_n \asTo \xi_0$ as
  $n \to \infty$.

\item\label{it:thm:main-result:2} If $k > H + 1/\alpha$ then
  \begin{align*}
    \sqrt{n} (\xi_n - \xi_0)
 &\distTo
   \Bigl[-2 \nabla_{\theta}^2 F(\varphi_{\xi_0}, \xi_0)^{-1} \Bigl(\int_{0}^{\infty} W_t \nabla_{\theta} \varphi_{\xi_0}(t) w(t) \Di t
    \\
 &\quad + \partial_H \nabla_{\theta} F(\varphi_{\xi_0}, \xi_0) M_1 \Bigr), M_1 \Bigr]^{\top}.
  \end{align*}

\item\label{it:thm:main-result:3} If $k < H + 1/\alpha$
  then
  \begin{align*}
    n^{1 - 1/\beta} (\xi_n - \xi_0)
    &\distTo
      \Bigl[-2 \nabla_{\theta}^2 F(\varphi_{\xi_0}, \xi_0)^{-1} \Bigl(S \int_{0}^{\infty} \kappa_2(t)  \nabla_{\theta} \varphi_{\xi_0}(t) w(t) \Di t
    \\
    &\quad + \partial_H \nabla_{\theta} F(\varphi_{\xi_0}, \xi_0) M_2\Bigr), M_2\Bigr]^{\top}.
  \end{align*}
\end{enumerate}
\end{theorem}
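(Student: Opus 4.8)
The plan is to prove the three assertions in order, treating the minimal contrast estimator via the standard M-estimation route, but carefully accounting for the plug-in of $H_n(p,k)$ and for the two possible asymptotic regimes. First I would establish consistency \ref{it:thm:main-result:1}. Since $H_n(p,k) \asTo H_0$ by \eqref{eq:estimator-H}, it suffices to show $\theta_n \asTo \theta_0$. The idea is to show that the contrast function $F(\varphi_n, H_n(p,k), \cdot)$ converges almost surely, uniformly on the compact closure of $\Theta_0$, to the limiting contrast $\theta \mapsto F(\varphi_{\xi_0}, H_0, \theta) = \|\varphi_{\theta_0,H_0} - \varphi_{\theta,H_0}\|_{w,2}^2$, which is nonnegative and vanishes only at $\theta=\theta_0$ provided the parametrisation $\theta \mapsto \varphi_{\theta,H_0}$ is injective on $\Theta_0$ (an identifiability fact one reads off from the explicit form $\varphi_{\theta,H}(t) = \exp(-|\sigma\|h_k\|_\alpha t|^\alpha)$, using $\Theta_0$ bounded away from the origin). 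Uniform convergence follows from the pointwise a.s.\ convergence $\varphi_n(t) \asTo \varphi_{\xi_0}(t)$ in \eqref{eq:empirical-characteric-conv}, the joint continuity of $(t,\theta,H)\mapsto \varphi_{\theta,H}(t)$, the bound $0\le\varphi_n\le 1$, a dominated convergence argument using $w\in\cL^1(\bbR_+)$, and a standard compactness/stochastic-equicontinuity argument; then the argmax theorem gives $\theta_n\to\theta_0$ a.s.

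Next, for the central limit behaviour in \ref{it:thm:main-result:2}, I would linearise the first-order condition. On the event that $\theta_n$ lies in the interior of $\Theta_0$ (which happens eventually a.s.\ by consistency), $\nabla_\theta F(\varphi_n, H_n(p,k), \theta_n) = 0$. A Taylor expansion of $\theta\mapsto\nabla_\theta F(\varphi_n,H_n,\theta)$ around $\theta_0$ gives
\begin{equation*}
  0 = \nabla_\theta F(\varphi_n, H_n, \theta_0) + \nabla_\theta^2 F(\varphi_n, H_n, \bar\theta_n)(\theta_n-\theta_0)
\end{equation*}
for an intermediate point $\bar\theta_n$. By consistency and continuity of the Hessian, $\nabla_\theta^2 F(\varphi_n,H_n,\bar\theta_n)\to \nabla_\theta^2 F(\varphi_{\xi_0},\xi_0)$ a.s., so it remains to find the limit of $\sqrt n\,\nabla_\theta F(\varphi_n, H_n, \theta_0)$. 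Here I expand in the two arguments that move: writing $\nabla_\theta F(\varphi,H,\theta) = -2\int_0^\infty (\varphi(t)-\varphi_{\theta,H}(t))\nabla_\theta\varphi_{\theta,H}(t)w(t)\Di t$, a first-order expansion in $\varphi$ around $\varphi_{\xi_0}$ and in $H$ around $H_0$ yields
\begin{equation*}
  \sqrt n\,\nabla_\theta F(\varphi_n, H_n, \theta_0)
  = -2\int_0^\infty \sqrt n(\varphi_n(t)-\varphi_{\xi_0}(t))\nabla_\theta\varphi_{\xi_0}(t)w(t)\Di t
  + \partial_H\nabla_\theta F(\varphi_{\xi_0},\xi_0)\sqrt n(H_n-H_0) + o_{\p}(1).
\end{equation*}
The second term converges jointly with $\sqrt n(H_n-H_0)\to M_1$ by \eqref{eq:limit-Hn-normal}. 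For the first term I would use that $W_n^2 = \sqrt n(\varphi_n-\varphi_{\xi_0})$ converges in the finite-dimensional sense to the Gaussian process $W$, and then upgrade this to convergence of the integral functional $\int_0^\infty W_n^2(t)\nabla_\theta\varphi_{\xi_0}(t)w(t)\Di t$ — this is exactly where the H\"older-continuity modification of $W$ from Theorem~\ref{thm:weak-limit}\ref{it:thm:weak-limit:1} enters, giving tightness in a suitable function space (e.g.\ $C$ on compacts plus a tail estimate using the exponential decay of $\nabla_\theta\varphi_{\xi_0}$ and integrability of $w$) so that the continuous-mapping theorem applies. The joint convergence of $(W_n^2, \sqrt n(H_n-H))$ to $(W, M_1)$ from \eqref{eq:limit-Hn-normal} then delivers the stated joint limit; solving the linearised equation and appending the trivial last coordinate $\sqrt n(H_n-H_0)\to M_1$ gives the displayed formula.

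Part \ref{it:thm:main-result:3} is structurally identical: the same first-order expansion and Hessian-convergence argument applies verbatim, but now one rescales by $n^{1-1/\beta}$ and invokes Theorem~\ref{thm:weak-limit}\ref{it:thm:weak-limit:2} together with \eqref{eq:limit-H_n-stable}, using that $S_n^2(t)\fidiTo \kappa_2(t)S$ is a one-dimensional stable limit multiplied by a deterministic function; then $\int_0^\infty S_n^2(t)\nabla_\theta\varphi_{\xi_0}(t)w(t)\Di t \distTo S\int_0^\infty\kappa_2(t)\nabla_\theta\varphi_{\xi_0}(t)w(t)\Di t$. The main obstacle, in both regimes, is the passage from finite-dimensional convergence of $W_n^2$ (resp.\ $S_n^2$) to convergence of the weighted integral functional: one must argue tightness/uniform-integrability of the tails $\int_T^\infty |W_n^2(t)|\,|\nabla_\theta\varphi_{\xi_0}(t)|w(t)\Di t$ uniformly in $n$, and on compacts exploit the explicit modulus-of-continuity control — for the stable regime this requires a little more care since $S_n^2$ need not be tight in the uniform topology, so I would instead work directly with the structure $S_n^2(t)\approx \kappa_2(t)\cdot(\text{one-dimensional object})$ established in the proof of Theorem~\ref{thm:weak-limit}\ref{it:thm:weak-limit:2} and reduce the functional convergence to the scalar case. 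A secondary technical point is justifying the interchange of $\nabla_\theta$ (and $\partial_H$, $\nabla_\theta^2$) with the integral defining $F$, and the differentiability and non-degeneracy (invertibility) of $\nabla_\theta^2 F(\varphi_{\xi_0},\xi_0)$, which I would verify from the explicit formula for $\varphi_{\theta,H}$ and the assumed identifiability.
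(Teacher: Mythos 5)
Your overall architecture matches the paper's: consistency via uniform convergence of the contrast plus identifiability, then linearisation of the first-order condition $\nabla_\theta F(\varphi_n,H_n,\theta_n)=0$ around $(\varphi_{\xi_0},\xi_0)$, joint convergence with $\sqrt n(H_n-H_0)$, and finally convergence of the weighted integral functional of $W_n^2$ (resp.\ $S_n^2$). The paper phrases the linearisation through an infinite-dimensional implicit function theorem for $G$ with $\nabla_\theta F(\psi,H,G(\psi,H))=0$ rather than a Taylor expansion with an intermediate Hessian, but these are equivalent, and your computation of the two partial derivatives agrees with the paper's $D^{1,1}$ and $D^{1,2}$.

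The genuine gap is in your treatment of the normal regime, where you assert that the H\"older-continuity modification of the limit $W$ ``gives tightness in a suitable function space'' for $W_n^2$ so that the continuous mapping theorem applies. Continuity of the \emph{limit} process says nothing about tightness of the \emph{prelimit} sequence; the paper explicitly flags that tightness of $(W_n^2(t))_{t\ge 0}$ ``is by far not a trivial issue'' and deliberately avoids it. Moreover, the natural uniform bound one can extract from the covariance structure \eqref{eq:covariance-W2} is $\E[(W_n^2(t)-W_n^2(s))^2]\le C|t-s|^{\alpha/2}$ with exponent $\alpha/2<1$; for the Gaussian limit $W$ this can be boosted to higher moments and yields a H\"older modification via Kolmogorov's criterion, but $W_n^2$ is not Gaussian, so this second-moment increment bound is \emph{not} sufficient for $C$-tightness, and higher-moment increment bounds for these heavy-tailed statistics are out of reach. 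The paper's way around this is Proposition~\ref{prop:fidi-int-conv}: for convergence of integral functionals one only needs finite-dimensional convergence, a uniform first-moment tail control $\lim_l\limsup_n\int_l^\infty\E[|Y^n_u|]\Di u=0$, and control in probability of the Riemann-discretisation error $X_{n,m,l}-X_{n,l}$ --- all of which follow from uniform second-moment bounds (via \eqref{eq:var-bound-W2} and Lemmas~\ref{lem:dependence-measure} and~\ref{lem:rho-bound}), together with continuity of the limit $W$, which is where the H\"older modification actually enters. Your proposal is missing this substitute for tightness, and as stated the step ``upgrade finite-dimensional convergence to convergence of the integral via tightness'' would fail. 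In the stable regime your fallback of reducing to the scalar structure $S_n^2(t)\approx\kappa_2(t)\cdot(\text{scalar})$ is closer to workable; the paper instead proves uniform $r$-th moment bounds, $r<\beta$, for the dominating term $\overline S_n(t)$ (Proposition~\ref{finalprop}) and again applies Proposition~\ref{prop:fidi-int-conv}.
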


\noindent In principle, the statement of Theorem~\ref{thm:main-result}
follows from \eqref{eq:limit-Hn-normal}, \eqref{eq:limit-H_n-stable}
and an application of the implicit function theorem. For general
infinite dimensional functionals of our statistics we would usually
need to show tightness of the process $(W_n^2(t))_{t \geq 0}$ (or
$(S_n^2(t))_{t \geq 0}$), which is by far not a trivial
issue. However, in the particular setting of integral functionals, it
suffices to show a weaker condition that is displayed in
Proposition~\ref{prop:fidi-int-conv}. Indeed, this is the key step of
the proof.

\section{Simulations, parametric bootstrap and subsampling}
\label{sec:simulation}

The theoretical results of Theorem~\ref{thm:main-result} are far from
easy to apply in practice. There are a number of issues, which need to
be addressed. First of all, since the parameters $H_0$ and $\alpha_0$
are unknown, we do not know whether we are in the regime of
Theorem~\ref{thm:main-result}\ref{it:thm:main-result:2}
or~\ref{it:thm:main-result:3}. Furthermore, even if we could determine
whether the condition $k>H_0+1/\alpha_0$ holds or not, the exact
computation or a reliable numerical simulation of the quantities
defined in \eqref{Sigmadef} and \eqref{eq:Phi-map} seems to be out of
reach. Below we will propose two methods to overcome these
problems. In the setting where the lfsm $(X_t)_{t\geq 0}$ is
continuous, which corresponds to the condition $H_0-1/\alpha_0>0$, we
will see that it suffices to choose $k=2$ to end up in the normal
regime of Theorem~\ref{thm:main-result}\ref{it:thm:main-result:2}. The
confidence regions are then constructed using the parametric bootstrap
approach. In the general setting we propose a novel subsampling method
which, in some sense, automatically adapts to the unknown limiting
distribution.

For comparison reasons we include the estimation of the parameter
$H_0$ using $H_n(p, k)$ defined at \eqref{eq:estimator-H}, even though
its properties have already been studied in \cite{MazuEsti}. Moreover,
we pick our weight function in the class of Gaussian kernels:
\begin{equation*}
  w_\nu(t) = \exp(-\tfrac{t^2}{2 \nu^2}) \qquad (t, \nu > 0).
\end{equation*}
In particular we can use Gauss-Hermite quadrature, see
\cite{SteeGaus}, to estimate the integral
\begin{equation*}
  \|\varphi_n - \varphi_{\xi} \|_{w, 2}^2
  = \int_{0}^{\infty} (\varphi_n(t) - \varphi_{\xi}(t))^2 w_\nu(t) \Di t.
\end{equation*}
This procedure is based on a number of weights, unless otherwise
stated we pick 12~weights. We mentioned that it is possible to choose
other weight functions and standard numerical procedures exits for
these. We restrict our simulation study to three different values of
$\nu \in \{0.05, 0.1, 1\}$ for the bootstrap method in
Section~\ref{sec:bootstrap}. Additionally, while the theoretical
characteristic function $\varphi_\xi$ has an explicit form it depends
on the norm $\norm{h_k}_\alpha$ which is not readily computable, hence
needs to be approximated.

To produce the simulation study we generate observations from the lfsm
using the recent \texttt{R} package \texttt{rlfsm}, which implements
an algorithm based on \cite{StoeSimu}; this package already includes
an implementation of the minimal contrast estimator. To compute the
estimator a minimisation
\begin{equation*}
  \argmin_{\sigma, \alpha} \int_{0}^{\infty} (\varphi_n(t) - \varphi_{\sigma, \alpha, H_n(p, k)}(t))^2 w_\nu(t) \Di t
\end{equation*}
has to be carried out. For this purpose we use \cite{NeldASim}, which in
particular entails picking a starting point for the algorithm. For
$\sigma$ no immediate choice exists, so we simply pick $\sigma = 2$,
while $\alpha = 1$ seems obvious. 

\subsection{Empirical bias and variance}

In this section we will check the bias and variance performance of our
minimal contrast estimator. First, we consider the empirical bias and
standard deviation, which are summarized in
Tables~\ref{tab:abs-bias-est-1000} and~\ref{tab:std-est-1000} for
$n = \num{1000}$ and Tables~\ref{tab:abs-bias-est}
and~\ref{tab:std-est} for $n = \num{10000}$. These are based on Monte
Carlo simulation with at least $\num{1000}$ repetitions. We fix the
parameters $k = 2$, $p = -0.4$, $\nu = 0.1$ and $\sigma_0 = 0.3$ and
perform the estimation procedure for various values of $\alpha$ and
$H$.

At this stage we recall that due to
Theorem~\ref{thm:main-result}\ref{it:thm:main-result:3} we obtain a
slower rate of convergence when $H_0 + 1/\alpha_0 > 2$; the stable
regime is indicated in bold in
Tables~\ref{tab:abs-bias-est-1000}--\ref{tab:std-est}. This explains a
rather bad estimation performance for $\alpha_0=0.4$. The effect is
specifically pronounced for the parameter $\sigma$, which has the
worst performance when $\alpha_0=0.4$. This observation is in line
with the findings of \cite{MazuEsti}, who concluded that the scale
parameter $\sigma$ is the hardest to estimate in practice. Also the
starting point $\sigma = 2$ of the minimisation algorithm, which is
not close to $\sigma_0 = 0.3$, might have a negative effect on the
performance. The estimation performance for values $\alpha_0 > 0.4$ is
quite satisfactory for all parameters, improving from $n = \num{1000}$
to $n = \num{10000}$. We remark the superior performance of our method
around the value $\alpha=1$, which is explained by the fact that
$\alpha = 1$ is the starting point of the minimisation procedure.

For comparison, we display the bias and standard deviation of our
estimator for $k = 1$ based on $n = \num{10000}$ observations in
Tables~\ref{tab:abs-bias-est-1} and~\ref{tab:std-est-1}, where the
stable regime is again highlighted in bold. We see a better finite
sample performance for $\alpha_0 = 0.4$, but in most other cases we
observe a larger bias and standard deviation compared to $k = 2$. This
is explained by slower rates of convergence in the setting of the
stable regime and $k = 1$.

We will now compare the minimal contrast estimator with the estimator
proposed in \cite{MazuEsti}. To recall the latter estimator we observe
the following identities due to \eqref{eq:empirical-characteric-conv}:
\begin{equation*}
  \sigma = \frac{(-\log\varphi_\xi(t_1;k))^{1/\alpha}}{t_1 \norm{h_k}_\alpha}, \quad
  \alpha = \frac{\log \abs{ \log \varphi_\xi(t_2; k)} - \log \abs{\log \varphi_\xi(t_1; k)}}{\log t_2 - \log t_1},
  \quad \xi = (\sigma, \alpha, H)
\end{equation*}
for fixed values $0 < t_1 < t_2$. Since $h_k$ depends on $\alpha$ and
$H$ we immediately obtain a function $G : \bbR^3 \to \bbR^2$ such that
\begin{equation*}
  (\sigma, \alpha) = G(\varphi_\xi(t_1; k), \varphi_\xi(t_2; k), H), \qquad \xi = (\sigma, \alpha, H).
\end{equation*}
Hence an estimator for $(\sigma, \alpha, H)$ is obtained by insertion
of the empirical characteristic function and $H_n(p; k)$ from
\eqref{eq:estimator-H}:
\begin{equation}
  \label{eq:old-estimator}
  (\widetilde{\sigma}_{\textrm{low}}, \widetilde{\alpha}_{\textrm{low}}, H_n(p; k))
  = G(\varphi_n(t_1; k), \varphi_n(t_2; k), H_n(p; k)).
\end{equation}%
%
%
The first comparison of the estimators is between
Tables~\ref{tab:abs-bias-est} and~\ref{tab:std-est} for the minimal
contrast estimator with Tables~\ref{tab:abs-bias-est-old}
and~\ref{tab:std-est-old}, all of which are based on at least
$\num{1000}$ Monte Carlo repetitions and the same parameter
choices. We see that the minimal contrast estimator outperforms the
old estimator for values $\alpha \leq 0.8$, where the latter is
completely unreliable in most cases. But for larger values of $\alpha$
the old estimator might be slightly better. However, the results for
the estimation of the scale parameter $\sigma$ in the case
$\alpha = 0.4$ and $H_0 \in \{0.2, 0.4\}$ are hard to interpret, since
$\widetilde{\sigma}_{\textrm{low}}$ often delivers values that are
indistinguishable from $0$ yielding a small variance and relatively
small bias for $\sigma_0 = 0.3$.

Another point is the instability of the old
estimator. Table~\ref{tab:failure-rate-old}, which
is based on at least $\num{1200}$ simulations,  shows the rate at which
the estimators fail to return a value $\alpha \in (0,2)$. In this
regard the minimal contrast estimator is far superior in most cases
and actually in the case $k = 1$ the estimator almost never fails, although
we dispense with the simulation results. We remark that in
theory the minimal contrast estimator should never return $\alpha$'s
not in the interval $[0, 2]$. However, we apply the minimisation
procedure from \cite{NeldASim} that does not allow constrained
optimisation. One could instead use e.g. the procedure from
\cite{ByrdALim}; we choose the first method as it is hailed as very
robust.

An additional advantage of the minimal contrast estimator is that it
allows incorporation of a priori knowledge of the parameters $\sigma$
and $\alpha$, using the weight function but also the starting point
for the minimisation algorithm.

\subsection{Bootstrap inference in the continuous case}
\label{sec:bootstrap}

In this section we only consider the continuous case, which
corresponds to the setting $H_0 - 1/\alpha_0 > 0$. In this setup
$H_0 \in (1/2, 1)$ and $\alpha_0 \in (1, 2)$ must hold. We can in
particular choose $k = 2$ to ensure that
Theorem~\ref{thm:main-result}\ref{it:thm:main-result:2} applies, thus
yielding the faster convergence rate $\sqrt{n}$. We are interested in
obtaining feasible confidence regions for all parameters, but, as we
mentioned earlier, the computation or reliable numerical approximation
of the asymptotic variance in
Theorem~\ref{thm:main-result}\ref{it:thm:main-result:2} is out of
reach. Instead we propose the following parametric bootstrap procedure
to estimate the confidence regions for the true parameters.

\begin{enumerate}[label = (\arabic*)]
\item Compute the minimal contrast $\xi_n$ estimator for given
  observations $X_1, \ldots, X_n$.

\item\label{it:bootstrap:2} Generate new samples
  $X_1^j, \ldots, X_n^j$ for $j = 1, \ldots, N$ using the parameter
  $\xi_n$.

\item Compute new estimators $\xi_n^{(j)}$ from the samples generated
  in \ref{it:bootstrap:2} for each $j = 1, \ldots, N$.

\item Calculate the empirical variance $\hat{\Sigma}_n$ of $\xi_n$
  based on the estimators $\xi_n^{(1)}, \ldots, \xi^{(N)}_n$.

\item For each parameter construct 95\%-confidence regions based on
  the relation
  \begin{equation*}
    \sqrt{n} (\xi_n - \xi_0) \approx \mathcal N(0, n \hat{\Sigma}_n).
  \end{equation*}
\end{enumerate}

\noindent
To test this we repeated the above procedure for \num{200} Monte Carlo
simulations with $N =
\num{200}$. Tables~\ref{tab:bootstrap-hitting-prob-1}
and~\ref{tab:bootstrap-hitting-prob-2} report acceptance rates for
$\sqrt{n} (\xi_n - \xi_0)$ for an approximate \SI{95}{\%}-confidence
interval. We observe a good performance for all estimators with the
exception of $n = \num{1000}$ for $\sigma$ with $\nu = 0.05, 0.1$ and
$\alpha$ for $\nu = 0.05$ in
Table~\ref{tab:bootstrap-hitting-prob-1}. The estimator is fairly
stable under changes in $\nu$ in this parameter regime, but it should
be mentioned that a smaller $\nu$-value does lead to a larger failure
rate, we dispense with the numerics.

\begin{table}[htpb!]
  \centering
  \caption{Acceptance rates for the true parameter
    $(\sigma_0, \alpha_0, H_0) = (0.3, 1.8, 0.8)$ and power $p = 0.4$}
  \label{tab:bootstrap-hitting-prob-1}
  \begin{tabular}{p{7mm} *{3}{S[table-format = 2.1, round-precision
        = 1]}}
    \multicolumn{4}{c}{$\nu = 0.1$}
    \\
    \toprule
    $n$ & \multicolumn{1}{c}{$\sigma$} & \multicolumn{1}{c}{$\alpha$} & \multicolumn{1}{c}{$H$}
    \\
    \midrule
    \num{1000} & 69.15888 & 95.79439 & 96.26168
    \\
    \num{2500} & 96.60194 & 98.54369 & 95.63107
    \\
    \num{5000} & 99.02439 & 99.5122 & 96.09756
    \\
    \bottomrule
  \end{tabular}%
\quad%
  \begin{tabular}{p{7mm} S[table-format = 3, round-precision
        = 2] *{2}{S[table-format = 2.1, round-precision
        = 1]}}
    \multicolumn{4}{c}{$\nu = 1$}
    \\
    \toprule
    $n$ & \multicolumn{1}{c}{$\sigma$} & \multicolumn{1}{c}{$\alpha$} & \multicolumn{1}{c}{$H$}
    \\
    \midrule
    \num{1000} & 100 & 95.54731 & 94.80519
    \\
    \num{2500} & 100 & 96.2963 & 93.05556
    \\
    \num{5000} & 100 & 93.54839  & 93.54839
    \\
    \bottomrule
  \end{tabular}%
  \quad%
  \begin{tabular}{p{7mm} *{2}{S[table-format = 3.1, round-precision =
        1]} S[table-format = 2.1, round-precision = 1]}
    \multicolumn{4}{c}{$\nu = 0.05$}
    \\
    \toprule
    $n$ & \multicolumn{1}{c}{$\sigma$} & \multicolumn{1}{c}{$\alpha$} & \multicolumn{1}{c}{$H$}
    \\
      \midrule
      \num{1000} & 81.73077 & 44.23077 & 99.51923
      \\
      \num{2500} & 99.57265 & 97.86325 & 97.86325
      \\
      \num{5000} & 100 & 99.54545 & 96.81818
      \\
    \bottomrule
  \end{tabular}%
\end{table}

\begin{table}[htpb!]
  \centering
  \caption{Acceptance rates for the true parameter
    $(\sigma_0, \alpha_0, H_0) = (0.3, 1.3, 0.8)$ and power $p = 0.4$}
  \label{tab:bootstrap-hitting-prob-2}
  \begin{tabular}{p{7mm} *{3}{S[table-format = 2.1, round-precision
        = 1]}}
    \multicolumn{4}{c}{$\nu = 0.1$}
    \\
    \toprule
    $n$ & \multicolumn{1}{c}{$\sigma$} & \multicolumn{1}{c}{$\alpha$} & \multicolumn{1}{c}{$H$}
    \\
    \midrule
    \num{1000} & 87.09677 & 95.39171 & 93.08756
    \\
    \num{2500} & 90.09901 & 91.58416 & 97.52475
    \\
    \num{5000} & 91.62562 & 91.62562 & 95.07389
    \\
    \bottomrule
  \end{tabular}
  \quad%
  \begin{tabular}{p{7mm} *{3}{S[table-format = 2.1, round-precision
        = 1]}}
    \multicolumn{4}{c}{$\nu = 1$}
    \\
    \toprule
    $n$ & \multicolumn{1}{c}{$\sigma$} & \multicolumn{1}{c}{$\alpha$} & \multicolumn{1}{c}{$H$}
    \\
    \midrule
    \num{1000} & 93.93939 & 95.671 & 93.93939
    \\
    \num{2500} & 95.71429 & 96.66667 & 94.28571
    \\
    \num{5000} & 93.54839 & 93.08756 & 93.54839
    \\
    \bottomrule
  \end{tabular}%
  \quad%
    \begin{tabular}{p{7mm} *{3}{S[table-format = 2.1, round-precision
        = 1]}}
      \multicolumn{4}{c}{$\nu = 0.05$}
      \\
      \toprule
      $n$ & \multicolumn{1}{c}{$\sigma$} & \multicolumn{1}{c}{$\alpha$} & \multicolumn{1}{c}{$H$}
      \\
      \midrule
      \num{1000} & 91.8552 & 98.64253 & 95.9276
      \\
      \num{2500} & 89.7561 & 96.58537 & 95.12195
      \\
      \num{5000} & 91.42857 & 94.28571 & 97.14286
      \\
    \bottomrule
  \end{tabular}%
\end{table}

\FloatBarrier

\subsection{Subsampling method in the general case}
\label{sec:subsampling}

In contrast to the continuous setting $H_0 - 1/\alpha_0 > 0$, there
exists no a priori choice of $k$ in the general case, which ensures
the asymptotically normal regime of
Theorem~\ref{thm:main-result}\ref{it:thm:main-result:2}. This problem
was tackled in \cite{MazuEsti} via the following two stage approach.
In the first step they obtained a preliminary estimator
$\alpha_n^0(t_1,t_2)$ of $\alpha_0$ using \eqref{eq:old-estimator} for
$k = 1$, $t_1 = 1$ and $t_2 = 2$. In the second step they defined the
random number
\begin{equation}
\label{eq:k-hat}
  \hat{k} = 2 + \lfloor \alpha_n^0(t_1, t_2)^{-1} \rfloor,
\end{equation}
and computed the estimator
$(\widetilde{\sigma}_{\textrm{low}},
\widetilde{\alpha}_{\textrm{low}}, H_n(p, k))$ based on
$k=\hat{k}$. They showed that the resulting estimator is
$\sqrt n$-consistent and derived the associated weak limit
theory. However, this approach does not completely solve the original
problem, since they obtained four different convergence regimes
according to whether $1 > H_0 + 1/\alpha_0$ or not, and whether
$\alpha_0^{-1} \in \bbN$ or not.

Nevertheless, we apply their idea to propose a new subsampling method
to determine feasible confidence regions for the parameters of the
model. For our procedure it is crucial that the convergence rate is
known explicitly and the weak convergence of the involved statistics
is insured. We proceed as follows:

\begin{enumerate}[label = (\arabic*)]
\item\label{it:subsamp:1} Given observations $X_1, \ldots, X_n$ 
  compute $\hat{k}$ from \eqref{eq:k-hat} and construct the minimal
  contrast estimator $\xi_n = (\sigma_n, \alpha_n, H_n(p, \hat{k}))$.

\item Split $X_1, \ldots, X_n$ into $L$ groups such that the $l$th
  group contains the data $(X_{(l-1)n/L + j})_{j = 1}^{n/L}$ ($n/L$ is
  assumed to be an integer). For each $l = 1, \ldots, L$ calculate
  $\hat{k}_l$ from \eqref{eq:k-hat}.
  
\item\label{it:subsamp:3} For each $l = 1, \ldots, L$ construct the
  minimal contrast estimators
  $(\sigma_n^{\smash{(l)}}, \alpha_n^{\smash{(l)}})$ and
  $H_n^{\smash{(l)}}(p, \hat{k}_l)$ based on the $l$th group. For the
  estimation of $(\sigma, \alpha)$ use $H_n(p, \hat{k})$ from
  \ref{it:subsamp:1} as plug-in.

\item Compute the $97.5\%$ and $2.5\%$ quantiles for each of the
  distribution functions
\end{enumerate}
\begin{equation*}
  \frac{1}{L} \sum_{l = 1}^{L} \1_{\bigl\{\sqrt{\frac{n}{L}}(\sigma_n^{(l)} - \sigma_n) \leq x\bigr\}},
  \quad
  \frac{1}{L} \sum_{l = 1}^{L} \1_{\bigl\{\sqrt{\frac{n}{L}}(\alpha_n^{(l)} - \alpha_n) \leq x\bigr\}},
  \quad     \frac{1}{L} \sum_{l = 1}^{L} \1_{\bigl\{\sqrt{\frac{n}{L}}(H_n^{\smash{(l)}}(p, \hat{k}_l) - H_n(p, \hat{k})) \leq x\bigr\}}.
\end{equation*}

\noindent Let us explain the intuition behind the proposed subsampling
procedure. First of all, similarly to the theory developed in
\cite{MazuEsti}, the minimal contrast estimator $\xi_n$ obtained
through a two step method described in the beginning of the section
leads to four different limit regimes for $\sqrt{n} (\xi_n - \xi_0)$
(although we leave out the theoretical derivation here). Using this
knowledge we may conclude that, for each $l=1,\ldots, L$,
$\sqrt{n/L} (\xi_n^{\smash{(l)}} - \xi_0)$ has the same (unknown)
asymptotic distribution as the statistic $\sqrt{n} (\xi_n - \xi_0)$ as
long as $n/L \to \infty$. Since the true parameter $\xi_0$ is unknown,
we use its approximation $\xi_n$, which has a much faster rate of
convergence than $\sqrt{n/L}$ when $L\to \infty$. Finally, the
statistics constructed on different blocks are asymptotically
independent, which follows along the lines of the proofs in
\cite{MazuEsti}. Hence, the law of large numbers implies that the
proposed subsampling statistics converge to the unknown true
asymptotic distributions when $L\to \infty$ and $n/L \to \infty$.

In Tables~\ref{tab:subsamp-1} and~\ref{tab:subsamp-2} we report the
empirical $95\%$-confidence regions for the parameters of the model
using the subsampling approach. We perform $500$ Monte Carlo
simulations and choose $n = 12.5 \times L^2$.

\begin{table}[htbp!]
  \centering
  \caption{Acceptance rates for the true parameter
    $(\sigma_0, \alpha_0, H_0) = (0.3, 0.8, 0.8)$. Here $p = -0.4$ and
    $\nu = 0.1$}
  \label{tab:subsamp-1}
  \begin{tabular}{p{0.8cm} p{1.2cm} *{3}{S[round-precision = 2, table-format = 2.2]}}
    \toprule
    $L$ & $n/L$ & \multicolumn{1}{c}{$\sigma$} & \multicolumn{1}{c}{$\alpha$} & \multicolumn{1}{c}{$H$}
    \\
    \midrule
    80 & \num{1000} & 90.65421 & 94.39252 & 89.71963
    \\
    100 & \num{1250} & 87.71593 & 94.24184 & 89.25144
    \\
    \bottomrule
  \end{tabular}
\end{table}

\begin{table}[htbp!]
  \centering
  \caption{Acceptance rates for the true parameter
    $(\sigma_0, \alpha_0, H_0) = (0.3, 1.8, 0.8)$. Here $p = -0.4$ and
    $\nu = 0.1$}
  \label{tab:subsamp-2}
 \begin{tabular}{p{0.8cm} p{1.2cm} *{3}{S[round-precision = 2, table-format = 2.2]}}
    \toprule
    $L$ & $n/L$ & \multicolumn{1}{c}{$\sigma$} & \multicolumn{1}{c}{$\alpha$} & \multicolumn{1}{c}{$H$}
    \\
    \midrule
    80 & \num{1000} & 60.69789 & 67.30946 & 92.19467
    \\
    100 & \num{1250} & 68.88218 & 74.92447 & 94.56193
    \\
    \bottomrule
  \end{tabular}
\end{table}

Table~\ref{tab:subsamp-1} shows a satisfactory performance for all
estimators, while the results of Table~\ref{tab:subsamp-2} are quite
unreliable for the parameters $\sigma$ and $\alpha$. The reason for
the latter finding is the suboptimal finite sample performance of the
estimators in the case of $(\alpha, H) = (1.8, 0.8)$, which is
displayed in Tables~\ref{tab:abs-bias-est-1000}
and~\ref{tab:std-est-1000}.

We conclude this section by remarking the rather satisfactory
performance of our estimator in the continuous setting
$H_0 - 1 / \alpha_0 > 0$. On the other hand, when using
the subsampling method in the general setting, a further careful
tuning seems to be required. In particular, the choice of the weight
function $w$ and the group number $L$ plays an important role in
estimator's performance. We leave this study for future research.

\section{Proofs}
\label{sec:proofs}

We denote by $C$ a finite, positive constant which may differ from
line to line. Moreover, any important dependence on other constants
warrants a subscript. To simplify notations we set $H_n = H_n(p,k)$. 

\subsection{Proof of
  Theorem~\ref{thm:weak-limit}\ref{it:thm:weak-limit:1}}
\label{sec:gaussian-case}

As we mentioned earlier, the convergence of finite dimensional
distributions has been shown in \cite[Theorem~2.2]{MazuEsti}, and thus
we only need to prove the smoothness property of the limit $W$. We
recall the definition of the quantity $U_{g,h}$ at
\eqref{eq:dependence-measure} and start with the following lemma.

\begin{lemma}[{{\cite[Eqs.~(3.4)--(3.6)]{PTA}}}]
\label{lem:dependence-measure}
Let $g, h \in \cL^\alpha(\bbR)$. Then for any $u, v \in \bbR$
\begin{align*}
  |U_{g,h}(u, v)| &\leq 2 | u v|^{\alpha/2} \int_0^{\infty} | g(x) h(x) |^{\alpha/2} \Di x
  \\
                  &\times \exp\Bigl( -2|u v|^{\alpha/2} \bigl(\|g\|_{\alpha}^{\alpha} \|h\|_{\alpha}^{\alpha} - \int_{0}^{\infty} |g(x)h(x)|^{\alpha/2} \Di x \bigr) \Bigr),
  \\
  |U_{g, h}(u, v)| &\leq 2 |u v|^{\alpha /2} \int_{0}^{\infty} |g(x) h(x)|^{\alpha/2} \Di x
  \\
                  &\times \exp\bigl(-\bigl(\|u g\|_{\alpha}^{\alpha/2} - \|v h\|_{\alpha}^{\alpha/2}\bigr)^2 \bigr).
  \end{align*}
  In particular, it holds that
  $|U_{g, h}(u, v)| \leq 2 |u v|^{\alpha/2} \int_{0}^{\infty} |g(x)
  h(x)|^{\alpha/2} \Di x$.
\end{lemma}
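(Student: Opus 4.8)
The plan is to reduce the lemma to two deterministic estimates. Absorbing the scale $\sigma$ into $g$ and $h$, write $U_{g,h}(u,v) = \exp(-A) - \exp(-B)$ with $A = \|ug+vh\|_\alpha^\alpha$ and $B = \|ug\|_\alpha^\alpha + \|vh\|_\alpha^\alpha$, both nonnegative. The mean value theorem for $\exp$ gives $\lvert \exp(-A) - \exp(-B)\rvert \le \exp(-\min(A,B))\,\lvert A - B\rvert$, so everything reduces to an upper bound on $\lvert A-B\rvert$ --- which produces the common prefactor $2\lvert uv\rvert^{\alpha/2}\int|gh|^{\alpha/2}$ --- together with two lower bounds on $\min(A,B)$, one for each of the two displayed exponential factors. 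The ``in particular'' estimate is then immediate, since $\exp(-\min(A,B)) \le 1$.

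For the prefactor I would use the elementary pointwise inequality: for $\alpha\in(0,2)$ and all real $a,b$,
\[
\bigl\lvert \lvert a+b\rvert^{\alpha} - \lvert a\rvert^{\alpha} - \lvert b\rvert^{\alpha}\bigr\rvert \le 2\,\lvert ab\rvert^{\alpha/2}.
\]
By homogeneity one reduces to $\lvert a\rvert=1$, $\lvert b\rvert=\rho\in[0,1]$ and checks the sign of $ab$ and the range of $\alpha$ separately: for $\alpha\le 1$ both sign cases follow at once from subadditivity of $t\mapsto t^{\alpha}$; for $1<\alpha<2$ the opposite-sign case reduces, after the substitution $x=\rho^{\alpha/2}$, to $(1-x)^2\le (1-x^{2/\alpha})^{\alpha}$, and the equal-sign case to a short mean-value-theorem estimate. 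Applying this with $a=ug(x)$, $b=vh(x)$ and integrating yields $\lvert A-B\rvert \le 2\lvert uv\rvert^{\alpha/2}\int_{\bbR}\lvert g(x)h(x)\rvert^{\alpha/2}\,\Di x$, which by Cauchy--Schwarz is at most $2\lvert uv\rvert^{\alpha/2}\|g\|_{\alpha}^{\alpha/2}\|h\|_{\alpha}^{\alpha/2}<\infty$; in particular all quantities are well defined.

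For the exponential factors, the bound $B = \|ug\|_{\alpha}^{\alpha}+\|vh\|_{\alpha}^{\alpha} \ge \bigl(\|ug\|_{\alpha}^{\alpha/2}-\|vh\|_{\alpha}^{\alpha/2}\bigr)^2$ is trivial. For $A$ the key point is that $f\mapsto \|f\|_{\alpha}^{\alpha/2}=(\int\lvert f\rvert^{\alpha})^{1/2}$ is subadditive on $\cL^{\alpha}(\bbR)$ for every $\alpha\in(0,2)$ --- for $\alpha\le 1$ by subadditivity of $t\mapsto t^{\alpha}$, and for $1<\alpha<2$ by the triangle inequality for $\|\cdot\|_{\alpha}$ followed by subadditivity of $t\mapsto t^{\alpha/2}$. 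Hence the reverse triangle inequality gives $\|ug+vh\|_{\alpha}^{\alpha/2}\ge \bigl\lvert \|ug\|_{\alpha}^{\alpha/2}-\|vh\|_{\alpha}^{\alpha/2}\bigr\rvert$, so $A\ge \bigl(\|ug\|_{\alpha}^{\alpha/2}-\|vh\|_{\alpha}^{\alpha/2}\bigr)^2$ as well; combined with the trivial bound on $B$ this yields the second displayed estimate. For the first displayed estimate I would instead combine $\min(A,B)\ge \max(A,B)-\lvert A-B\rvert\ge B-\lvert A-B\rvert$ with the bound on $\lvert A-B\rvert$ from the previous step and one more use of Cauchy--Schwarz, keeping track of constants to match \cite[Eqs.~(3.4)--(3.6)]{PTA}.

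The only genuinely delicate point is the sharp pointwise inequality, specifically the equal-sign subcase for $1<\alpha<2$, where one must verify that $\rho\mapsto (1+\rho)^{\alpha}-1-\rho^{\alpha}-2\rho^{\alpha/2}$ is nonpositive on $[0,1]$; its endpoint values are $0$ and $2^{\alpha}-4<0$, and the sign of its derivative (which tends to $-\infty$ as $\rho\downarrow 0$) is controlled by an elementary argument. Everything else --- subadditivity of $\|\cdot\|_{\alpha}^{\alpha/2}$, Cauchy--Schwarz, and the mean value theorem for $\exp$ --- is routine bookkeeping.
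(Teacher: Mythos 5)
The paper itself offers no proof of this lemma --- it is imported verbatim from \cite[Eqs.~(3.4)--(3.6)]{PTA} --- so there is no internal argument to compare against; your reconstruction follows what is essentially the standard proof from that source, and it is correct. The skeleton all checks out: the mean value theorem for $\exp$ reduces everything to $|A-B|$ and $\min(A,B)$; the pointwise inequality $\bigl||a+b|^{\alpha}-|a|^{\alpha}-|b|^{\alpha}\bigr|\le 2|ab|^{\alpha/2}$ gives the common prefactor; subadditivity of $\|\cdot\|_{\alpha}^{\alpha/2}$ together with $x^{2}+y^{2}\ge (x-y)^{2}$ gives the second exponential factor; and $\min(A,B)\ge B-|A-B|$ plus AM--GM gives the first. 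Two remarks. First, your route yields the exponent $-2|uv|^{\alpha/2}\bigl(\|g\|_{\alpha}^{\alpha/2}\|h\|_{\alpha}^{\alpha/2}-\int|gh|^{\alpha/2}\bigr)$ in the first bound, \emph{not} the $\|g\|_{\alpha}^{\alpha}\|h\|_{\alpha}^{\alpha}$ printed in the statement; do not try to ``keep track of constants'' to match the printed form, because that form is a typo --- it is not homogeneous under scaling of $g$ and fails outright for $g=h$ with $\|g\|_{\alpha}^{\alpha}$ large --- whereas the exponent your argument produces is exactly what the paper uses later at \eqref{eq:lleqk}. Likewise your $\int_{\bbR}$ is the honest bound for general $g,h\in\cL^{\alpha}(\bbR)$; the $\int_{0}^{\infty}$ in the statement merely reflects that $h_k$ vanishes on the negative half-line in the intended application. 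Second, the pointwise inequality is less delicate than you suggest: in the same-sign case with $1<\alpha<2$ you can avoid the derivative analysis entirely, since $(1+\rho)^{\alpha}-1-\rho^{\alpha}=\alpha\int_{0}^{\rho}\bigl((1+s)^{\alpha-1}-s^{\alpha-1}\bigr)\Di s\le\alpha\rho\le 2\rho^{\alpha/2}$ for $\rho\in[0,1]$, using $(1+s)^{\alpha-1}\le 1+s^{\alpha-1}$; and your opposite-sign reduction $(1-x)^{2}\le(1-x^{2/\alpha})^{\alpha}$ is just $x^{p}+(1-x)^{p}\le 1$ for $p=2/\alpha>1$.
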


\noindent Next, we define for each $l \in \bbZ$ 
\begin{equation*}
  \rho_l = \int_{0}^{\infty} |h_k(x) h_k(x + l)|^{\alpha /2} \Di x
\end{equation*}
and recall the following lemma from \cite{MazuEsti}.

\begin{lemma}[{{\cite[Lemma~6.2]{MazuEsti}}}]
\label{lem:rho-bound}
If $k > H + 1/\alpha$ and $l > k$ then
\begin{equation*}
\rho_l \leq l^{(\alpha(H - k) - 1) / 2}.
\end{equation*}
\end{lemma}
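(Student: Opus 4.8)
The plan is to deduce the bound from a sharp pointwise decay estimate for $h_k$ at infinity, together with the observation that the hypothesis $k>H+1/\alpha$ is exactly the condition making $h_k\in\cL^{\alpha/2}(\bbR)$. Write $\gamma=H-1/\alpha$, so $\gamma-k<0$. The first step is the pointwise bound
\[
  |h_k(x)|\le|q_{H,\alpha,k}|\,(x-k)^{\gamma-k},\qquad x>k.
\]
Indeed, for $x>k$ all nodes in $h_k(x)=\sum_{j=0}^{k}(-1)^j\binom{k}{j}(x-j)_+^{\gamma}$ are positive, so $h_k(x)$ is the $k$-th finite difference (step $1$) over the nodes $x-k,\dots,x$ of the $C^\infty$ function $t\mapsto t^{\gamma}$; by the mean value theorem for finite differences it equals $q_{H,\alpha,k}\,\zeta^{\gamma-k}$ for some $\zeta\in(x-k,x)$, and $\gamma-k<0$ gives the displayed estimate. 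Since $(\gamma-k)\tfrac{\alpha}{2}=\tfrac12(\alpha(H-k)-1)<-1$ precisely under the hypothesis, while the only other singularities of $|h_k|^{\alpha/2}$ are the integrable ones $\sim(x-j)^{(\alpha H-1)/2}$ at $j=0,\dots,k$, we get $c_0\coloneqq\int_0^\infty|h_k(x)|^{\alpha/2}\Di x<\infty$.

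Next I would split $\rho_l=\int_0^{l}+\int_l^{\infty}$ (recall $l>k$). On $x\in(0,l)$ one has $x+l>l>k$, so the pointwise bound gives $|h_k(x+l)|^{\alpha/2}\le|q_{H,\alpha,k}|^{\alpha/2}(l-k)^{(\gamma-k)\alpha/2}$ uniformly, and this part of $\rho_l$ is $\le c_0\,|q_{H,\alpha,k}|^{\alpha/2}(l-k)^{(\alpha(H-k)-1)/2}$ — already of the claimed order. On $x>l$ I would use the pointwise bound on both factors together with $x+l-k\ge x-k$, reducing this part to $|q_{H,\alpha,k}|^{\alpha}\int_l^\infty(x-k)^{\alpha(H-k)-1}\Di x$, a finite constant times $(l-k)^{\alpha(H-k)}$ (the exponent being $<-1$). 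Finally, $\alpha(H-k)<-1$ forces $\alpha(H-k)<\tfrac12(\alpha(H-k)-1)$, so for $l>k$ the far-region term is dominated by $(l-k)^{(\alpha(H-k)-1)/2}$; adding the two pieces and absorbing the harmless shift $l\mapsto l-k$ into the constant yields $\rho_l\le C\,l^{(\alpha(H-k)-1)/2}$.

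The two elementary integrations and the bookkeeping of constants are routine. The one step that genuinely matters — and the reason why a naive Cauchy--Schwarz bound $\rho_l\le\|h_k\|_{\alpha}^{\alpha/2}\bigl(\int_l^\infty|h_k|^{\alpha}\bigr)^{1/2}$, giving only the exponent $\alpha(H-k)/2$, misses by a factor $l^{1/2}$ — is that on the near region $x\in(0,l)$ one must use that $h_k(x+l)$ is uniformly of the small order $l^{\gamma-k}$ rather than merely $\cL^{\alpha}$-integrable; the sharp pointwise tail estimate from the first step is precisely what supplies this, and it is the source of the extra $-1/2$ in the exponent.
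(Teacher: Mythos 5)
Your argument is sound, and it fills a real gap: the paper itself gives no proof of Lemma~\ref{lem:rho-bound}, importing it verbatim from \cite[Lemma~6.2]{MazuEsti}, so any self-contained derivation is by construction a different route. Your key tool --- the pointwise bound $|h_k(x)|\le |q_{H,\alpha,k}|\,(x-k)^{H-1/\alpha-k}$ for $x>k$, obtained from the mean value theorem for $k$th finite differences of $t\mapsto t^{H-1/\alpha}$ --- is exactly the quantitative form of the asymptotic \eqref{eq:asymp-hk} that the paper states without proof and uses elsewhere, so your approach is consistent with the paper's toolkit. The split of $\rho_l$ at $x=l$, the uniform bound $|h_k(x+l)|\le |q_{H,\alpha,k}|(l-k)^{H-1/\alpha-k}$ on the near region, and your closing observation that a naive Cauchy--Schwarz bound only yields the exponent $\alpha(H-k)/2$ and thus loses a factor $l^{1/2}$, are all correct and well judged. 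Two caveats. First, your comparison of the far-region exponent $\alpha(H-k)$ with $(\alpha(H-k)-1)/2$, and the final absorption of the shift $l\mapsto l-k$ into a constant, both require $l-k\ge 1$; this holds here because $l$ and $k$ are integers ($\rho_l$ is indexed by $l\in\bbZ$), but you should say so explicitly, since for real $l\downarrow k$ both steps break down. Second, what your argument delivers is $\rho_l\le C\,l^{(\alpha(H-k)-1)/2}$ with a constant depending on $\int_0^\infty|h_k|^{\alpha/2}$, $|q_{H,\alpha,k}|$ and $\alpha(k-H)$, whereas the statement as transcribed carries constant $1$. This discrepancy is immaterial for the paper --- both uses of the lemma, the summability of $\sum_{l}\rho_l$ and the estimate \eqref{eq:lgeqk}, already absorb generic constants --- but you should flag that you are proving the bound only up to a multiplicative constant.
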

To prove that the process $W$ is locally H\"older continuous of any
order smaller than $\alpha/4$, we use Kolmogorov's
criterion.
Since $W$ is a Gaussian process it suffices to prove that for each
$T > 0$ there exists a constant $C_T \geq 0$ such that
\begin{equation}
\label{eq:holder-inequality-W2}
  \E[(W_t - W_s)^2] \leq C_T |t - s|^{\alpha/2} \qquad \text{for all $s, t \in [0, T]$.}
\end{equation}
This is performed in a similar fashion as in
\cite[Section~4.1]{LjunPara}. First, we reduce the problem. Using
$\cos(t x) = (\exp(\I t x) + \exp(-\I t x)) / 2$ and the symmetry of
the distribution of $X$, we observe the identity
\begin{align*}
  \cov(\cos(t\Delta_{j,k} X), \cos(s\Delta_{j+l,k} X))
  = \frac{1}{2} \Bigl( U_{h_k, -h_k(l+\cdot)}(t, s) + U_{h_k, h_k(l+\cdot)}(t, s) \Bigr). 
\end{align*}
In the following we focus on the first term in the above decomposition
(the second term is treated similarly). More specifically, we will
show the inequality \eqref{eq:holder-inequality-W2} for the quantity
$\overline{r}(t, s)$, which is given as
\begin{align*}
  \overline{r}(t, s) &= \sum_{l \in \bbZ} \overline{r}_l(t, s) \quad \text{where}
  \\
  \overline{r}_l(t, s) &=U_{h_k, -h_k(l+\cdot)}(t, s)
  \\
                     &= \exp(-\|t h_k - s h_k(l + \cdot)\|_\alpha^\alpha) - \exp(-(t^\alpha + s^\alpha)\|h_k\|_\alpha^\alpha).
\end{align*}
Moreover, since
$\overline{r}(t, t) + \overline{r}(s, s) - 2 \overline{r}(t, s) \leq
|\overline{r}(t, t) - \overline{r}(t, s)| + |\overline{r}(s, s) -
\overline{r}(t, s)|$ it is by symmetry enough to prove that
\begin{equation*}
  |\overline{r}(t, t) - \overline{r}(t, s)| \leq C_T |t - s|^{\alpha/2}
  \qquad \text{for all $s, t \in [0, T]$.}
\end{equation*}
For $l \in \bbZ$ decompose now as follows:
\begin{align*}
  \MoveEqLeft[0] \overline{r}_l(t, t) - \overline{r}_l(t, s)
  = \exp(-2 t^\alpha \|h_k\|_\alpha^\alpha) \Bigl[\exp(-\|t(h_k - h_k(\cdot + l))\|_\alpha^\alpha + 2 t^\alpha \|h_k\|_\alpha^\alpha ) - 1\Bigr]
  \\
  &- \exp(-(t^\alpha + s^\alpha)\|h_k\|_\alpha^\alpha) \Bigl[ \exp(-\|t h_k - s h_k(l + \cdot)\|_\alpha^\alpha + (t^\alpha + s^\alpha)\|h_k\|_\alpha^\alpha) - 1 \Bigr]
  \\
  &= \Bigl[\exp(-2 t^\alpha \|h_k\|_\alpha^\alpha) - \exp(-(t^\alpha + s^\alpha) \|h_k\|_\alpha^\alpha)\Bigr]
  \\
  &\times \Bigl[\exp(-\|t(h_k - h_k(l + \cdot))\|_\alpha^\alpha + 2 t^\alpha \|h_k\|_\alpha^\alpha) - 1\Bigr] + \exp(-(t^\alpha + s^\alpha)\|h_k\|_\alpha^\alpha)
  \\
  &\times \Bigl[\exp(-\|t(h_k - h_k(l + \cdot))\|_\alpha^\alpha + 2 t^\alpha \|h_k\|_\alpha^\alpha) - \exp(-\|t h_k - s h_k(l + \cdot)\|_\alpha^\alpha + (t^\alpha + s^\alpha) \|h_k\|_\alpha^\alpha)\Bigr]
  \\
  &\eqqcolon \overline{r}_l^{(1)}(t, s) + \overline{r}_l^{(2)}(t, s).
\end{align*}
Applying the second inequality of Lemma~\ref{lem:dependence-measure}
and the mean value theorem we deduce the estimate:
\begin{equation}
  \label{eq:bound-r1}
  |\overline{r}^{(1)}(t, s)| \leq C_T \rho_l |t^\alpha - s^\alpha| \leq C_T \rho_l |t - s|^{\alpha/2}
  \qquad \text{for all $s, t \in [0, T]$.}
\end{equation}
Again by the mean value theorem we find that
\begin{equation*}
|\overline{r}^{(2)}(t, s)| \leq C_T \Bigl| \|t h_k - s h_k(l + \cdot)\|_\alpha^\alpha - \|t (h_k - h_k(l + \cdot))\|_\alpha^\alpha + (t^\alpha - s^\alpha) \|h_k\|_\alpha^\alpha \Bigr|.
\end{equation*}
The last term can be rewritten as
\begin{align*}
  \MoveEqLeft\|t h_k - s h_k(l + \cdot)\|_\alpha^\alpha - \|t(h_k - h_k(l + \cdot))\|_\alpha^\alpha + (t^\alpha - s^\alpha) \|h_k\|_\alpha^\alpha
  \\
&= \int_{0}^{\infty} |t h_k(x) - s h_k(x + l)|^\alpha - |t(h_k(x) - h_k(l + x))|^\alpha + (t^\alpha - s^\alpha) |h_k(x + l)|^\alpha \Di x.
\end{align*}
As $ \alpha \in (0, 2)$ we have
$|x^\alpha - y^\alpha| \leq |x^2 - y^2|^{\alpha/2}$ for all
$x, y \geq 0$. In particular
\begin{align*}
  \bigl||t h_k(x) - s h_k(x + l)|^\alpha - |t(h_k(x) - h_k(x + l))|^\alpha \bigr|
  &\leq C_T |t - s|^{\alpha/2}
  \\
  &\times (|h_k(x + l)|^\alpha + |h_k(x) h_k(x + l)|^{\alpha/2}).
\end{align*}
It then follows that
\begin{equation}
  \label{eq:bound-r2}
  |\overline{r}_l^{(2)}(t, s)| \leq C_T |t - s|^{\alpha/2} (\rho_l + \mu_l)
  \qquad \text{for all $s, t \in [0, T]$}
\end{equation}
where $\mu_l$ is the quantity defined as
\begin{equation*}
  \mu_l = \int_{0}^{\infty} |h_k(x + l)|^\alpha \Di x.
\end{equation*}
It remains to prove that
\begin{equation}
  \label{eq:summability-rho-mu}
  \sum_{l \in \bbZ} \rho_l < \infty \qquad \text{and} \qquad \sum_{l \in \bbZ} \mu_l < \infty.
\end{equation}
The first claim is a direct consequence of
Lemma~\ref{lem:rho-bound}. The second convergence can equivalently be
formulated as
\begin{equation*}
  \sum_{l = 1}^{\infty} l \int_{l}^{l + 1} |h_k(x)|^\alpha \Di x < \infty.
\end{equation*}
Recall that $|h_k(x)| \leq C |x|^{H - 1/\alpha - k}$ for large $x$,
hence
\begin{equation*}
  \sum_{l = 1}^{\infty} l \int_{l}^{l + 1} |h_k(x)|^\alpha \Di x
  \leq C \sum_{l = 1}^{\infty} l \int_{l}^{l + 1} x^{\alpha(H - k) - 1} \Di x
  \leq C \sum_{l = 1}^{\infty} l^{\alpha(H - k)} < \infty,
\end{equation*}
where we used the assumption $k > H + 1/\alpha$. Combining
\eqref{eq:bound-r1} and~\eqref{eq:bound-r2} with
\eqref{eq:summability-rho-mu} we can conclude
\eqref{eq:holder-inequality-W2}, and hence the proof of
Theorem~\ref{thm:weak-limit}\ref{it:thm:weak-limit:1} is complete.

\subsection{Proof of
  Theorem~\ref{thm:weak-limit}\ref{it:thm:weak-limit:2} }
\label{stable-case}

We recall that the asymptotic distribution of the vector
$(S_n^1(1), S_n^1(2))$ and its asymptotic independence of $S_n^2(t)$
have been shown in \cite[Theorem~2.2]{MazuEsti}. Hence, we only need
to determine the functional form of the limit of the statistic
$S_n^2(t)$.

In the following we will recall a number of estimates and
decompositions from \cite[Theorem~2.2]{MazuEsti}, which will be also
helpful in the proof of
Theorem~\ref{thm:main-result}\ref{it:thm:main-result:3}. We start out
with a series of estimates on the function $\Phi_t^2$ given at
\eqref{eq:Phi-map}, but for a general scale parameter $\eta > 0$. Let
$\Phi_{t, \eta}$ denote the function
\begin{equation*}
\Phi_{t, \eta}(x) = \E[\cos(t (Y + x))] - \E[\cos(t Y)] \qquad x \in \bbR,
\end{equation*}
where $Y$ is an S$\alpha$S distributed random variable with scale
parameter $\eta$. We obviously have the representation
\begin{equation}
\label{eq:Phit-representation}
\Phi_{t, \eta}(x) = (\cos(x t) - 1) \exp(-\abs{\eta t}^{\alpha}).
\end{equation}
The next lemma gives some estimates on the function $\Phi_{t, \eta}$. 

\begin{lemma}
\label{lem:bounds-Phi}
For $\eta > 0$ set $g_{\eta}(t) = \exp(-\abs{\eta t}^{\alpha})$ and
let $\Phi_{t, \eta}^{(v)}(x)$ denote the $v$th derivative at
$x \in \bbR$. Then there exists a constant $C > 0$ such that for all
$t \geq 0$ it holds that
\begin{enumerate}[label = (\roman*)]
\item\label{it:lem:bounds-Phi:1}
  $\abs{\Phi_{t, \eta}^{(v)}(x)} \leq C t^v g_{\eta}(t)$ for all
  $x \in \bbR$ and $v \in \{0, 1, 2\}$.

\item\label{it:lem:bounds-Phi:2}
  $\abs{\Phi_{t, \eta}(x)} \leq g_{\eta}(t) (1 \wedge \abs{x t}^2)$.

\item\label{it:lem:bounds-Phi:3}
  $\abs{\Phi_{t, \eta}(x) - \Phi_{t, \eta}(y)} \leq t^2 g_{\eta}(t)
  ((1 \wedge \abs{x} + 1 \wedge \abs{y}) \abs{x - y} \1_{\{\abs{x -
        y} \leq 1\}} + \1_{\{\abs{x - y} > 1\}})$.

\item\label{it:lem:bounds-Phi:4} For any $x, y > 0$ and $a \in \bbR$
  then
  \begin{equation*}
    F(a, x, y) \coloneqq \abs[\bigg]{\int_{0}^{y} \int_{0}^{x} \Phi_{t, \eta}^{(v)}(a + u + v) \Di u \Di v} \leq C g_{\eta}(t) (t + 1)^2 (1 \wedge x) (1 \wedge y).
  \end{equation*}
\end{enumerate}
\end{lemma}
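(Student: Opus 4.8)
The plan is to reduce every estimate to the closed form \eqref{eq:Phit-representation}, $\Phi_{t,\eta}(x) = (\cos(xt)-1)\,g_\eta(t)$, treating $g_\eta(t)$ as an inert multiplicative factor that is carried through unchanged, and then to invoke only elementary trigonometric inequalities. For \ref{it:lem:bounds-Phi:1} I differentiate \eqref{eq:Phit-representation} in $x$, obtaining $\Phi_{t,\eta}'(x) = -t\sin(xt)g_\eta(t)$ and $\Phi_{t,\eta}''(x) = -t^2\cos(xt)g_\eta(t)$, so that $|\Phi_{t,\eta}^{(v)}(x)| \le C t^v g_\eta(t)$ for $v\in\{0,1,2\}$ is immediate from $|\cos|,|\sin|\le 1$ and $|\cos\theta-1|\le 2$. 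For \ref{it:lem:bounds-Phi:2} I use the half-angle identity $\cos\theta-1 = -2\sin^2(\theta/2)$ together with $|\sin(\theta/2)|\le\min\{1,|\theta|/2\}$, which yields $|\cos\theta-1| \le C(1\wedge\theta^2)$, applied with $\theta = xt$.

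For \ref{it:lem:bounds-Phi:3} I write $\Phi_{t,\eta}(x)-\Phi_{t,\eta}(y) = (\cos(xt)-\cos(yt))g_\eta(t)$ and use the product-to-sum identity $\cos(xt)-\cos(yt) = -2\sin\!\bigl(\tfrac{(x+y)t}{2}\bigr)\sin\!\bigl(\tfrac{(x-y)t}{2}\bigr)$. On $\{|x-y|>1\}$ the crude bound $|\Phi_{t,\eta}(x)-\Phi_{t,\eta}(y)|\le 4g_\eta(t)$ is dominated by the $\1_{\{|x-y|>1\}}$ term after absorbing a constant. On $\{|x-y|\le1\}$ I bound $|\sin(\tfrac{(x-y)t}{2})|\le\tfrac{t}{2}|x-y|$ and estimate $|\sin(\tfrac{(x+y)t}{2})|\le\min\{1,\tfrac{t}{2}(|x|+|y|)\}$, then compare this last quantity with $t\,(1\wedge|x|+1\wedge|y|)$ by distinguishing the case $|x|,|y|\le1$ from the case that one of them exceeds $1$ (where instead the $1$-Lipschitz bound $|\cos(xt)-\cos(yt)|\le t|x-y|$ is the relevant one); assembling the sub-cases gives the claimed estimate.

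The step I expect to require the most care is \ref{it:lem:bounds-Phi:4}. The idea is to write the double integral as an iterated one and integrate the variables out one at a time, exploiting that the antiderivative of $u\mapsto\cos((a+u)t)$ is $t^{-1}\sin((a+u)t)$: each integration against the oscillatory factor of $\Phi_{t,\eta}^{(2)}(x)=-t^2\cos(xt)g_\eta(t)$ produces a bounded term of size $\le 2/t$, and two such integrations cancel the $t^2$ prefactor, which is what keeps the bound uniform in $t$ (up to the harmless factor $(1+t)^2$). Concretely I would combine three estimates for $F(a,x,y)$: the trivial $F\le t^2 g_\eta(t)\,xy$ from bounding the integrand; and, after performing both integrations and recognising the result as a mixed second difference of $\Phi_{t,\eta}$, the bounds $F\le Cg_\eta(t)\min\{2,|xt|\}\le Cg_\eta(t)(1\wedge|xt|)$ and, symmetrically, $F\le Cg_\eta(t)(1\wedge|yt|)$, obtained from $|\cos A-\cos B|\le\min\{2,|A-B|\}$. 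Using $1\wedge|zt|\le(1+t)(1\wedge z)$ and $t^2\le(1+t)^2$, the first estimate handles $x,y\le1$ and the other two handle the remaining regions ($x\le1<y$, $y\le1<x$, $x,y>1$), yielding $F(a,x,y)\le Cg_\eta(t)(1+t)^2(1\wedge x)(1\wedge y)$. The only genuinely technical point is this case-by-case bookkeeping and the matching of powers of $t$.
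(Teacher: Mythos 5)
Your proof is correct and follows essentially the same route as the paper's: every part is read off from the closed form \eqref{eq:Phit-representation} by elementary trigonometric inequalities, and for \ref{it:lem:bounds-Phi:4} you combine the crude bound $t^2g_\eta(t)xy$ with two bounds extracted from the mixed second difference and split into the same four regions (your intermediate bounds $Cg_\eta(t)(1\wedge|xt|)$ and $Cg_\eta(t)(1\wedge|yt|)$ are interchangeable with the paper's $Cg_\eta(t)tx$ and $Cg_\eta(t)ty$). The one caveat, which you share with the paper's own argument, concerns part \ref{it:lem:bounds-Phi:3}: in the subcases where you fall back on $|\cos(xt)-\cos(yt)|\le t|x-y|$ (one of $|x|,|y|$ exceeding $1$) or on the trivial bound $\le 2g_\eta(t)$ (the region $|x-y|>1$), the resulting quantity is dominated by the stated prefactor $t^2g_\eta(t)$ only when $t\ge 1$; the inequality as written fails for small $t$, exactly as does the paper's step $|\Phi^{(1)}_{t,\eta}(x)|\le t^2g_\eta(t)(1\wedge|x|)$, so the prefactor should be read as $C(1+t)^2g_\eta(t)$ --- a correction that is immaterial wherever the lemma is applied, since only $\sup_{t\ge0}(1+t)^2g_\eta(t)<\infty$ is ever used.
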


\begin{proof}
  \noindent\textbf{\ref{it:lem:bounds-Phi:1}:} This follows directly
  from \eqref{eq:Phit-representation}.

 \paragraph{\ref{it:lem:bounds-Phi:2}:} This is straightforward using
 the standard inequality $1 - \cos(y) \leq y^2$.

\paragraph{\ref{it:lem:bounds-Phi:3}:} \ref{it:lem:bounds-Phi:1}
  implies that
  $\abs{\Phi_{t, \eta}^{(1)}(x)} \leq t^2 g_{\eta}(t) (1 \wedge
  \abs{x})$ and note that
\begin{equation*}
\abs{\Phi_{t, \eta}(x) - \Phi_{t, \eta}(y)} = \abs[\Big]{\int_{y}^{x} \Phi_{t, \eta}^{(1)}(u) \Di u}.
\end{equation*}
If $\abs{x - y} > 1$ we simply bound the latter by $t^2
g_{\eta}(t)$. If $\abs{x - y} \leq 1$, then by the mean value theorem
there exists a number $s$ with $|x - s|\leq |x-y|$ such that
\begin{equation*}
\abs[\Big]{\int_{y}^{x} \Phi_{t, \eta}^{(1)}(u) \Di u} = \abs{\Phi_{t, \eta}^{(1)}(s)} \abs{x - y} .
\end{equation*}
Observe then
\begin{equation*}
  \abs{\Phi_{t, \eta}^{(1)}(s)} \leq t^2 g_{\eta}(t) (1 \wedge \abs{s})
  \leq t^2 g_{\eta}(t) (1 \wedge (\abs{x} + \abs{y})).
\end{equation*}
This completes the proof of the inequality.

\paragraph{\ref{it:lem:bounds-Phi:4}:} Let $a$, $x$ and $y$ be
given. Observe that
\begin{align*}
  \int_{0}^{y} \int_{0}^{x} \Phi_{t, \eta}^{(2)}(a + u + v) \Di u \Di v
  &= \int_{0}^{y} \Phi_{t, \eta}^{(1)}(a + x + v) - \Phi_{t, \eta}^{(1)}(a + v) \Di v
  \\
  &= \Phi_{t, \eta}(a + x + y) - \Phi_{t, \eta}(a + y) - (\Phi_{t, \eta}(a + x) - \Phi_{t, \eta}(a)).
\end{align*}
The last equality implies that $F(a, x, y) \leq C g_{\eta}(t)$. The
first equality implies that $F(a, x, y) \leq C g_{\eta}(t) t
y$. Reversing the order of integration we get a similar expression as
the first equality with $x$ replaced by $y$. Hence,
$F(a, x, y) \leq C g_{\eta}(t) t x$. Lastly, using
\ref{it:lem:bounds-Phi:1} on the first integral yields
$F(a, x, y) \leq C g_{\eta}(t) t^2 x y$. Splitting into the four cases
completes the proof.
\end{proof}

\noindent We will consider the asymptotic decomposition of the
statistic $S_n^2(t)$ given in \cite[Section~5]{BassOnLi} (see also
\cite{BassPowe,MazuEsti}). We set
\begin{equation*}
S_n^2(t) = n^{-1/\beta} \sum_{i = k}^{n} (\cos(t \Delta_{i, k} X) - \varphi_{\xi}(t)) \eqqcolon n^{-1/\beta} \sum_{i = k}^{n} V_i(t).
\end{equation*}
Define for each $s \geq 0$ we define the $\sigma$-algebras
\begin{equation*}
  \cG_s = \sigma (L_v - L_u : v, u \leq s) \qquad \text{and} \qquad \cG_s^1 = \sigma (L_v - L_u : s \leq v, u \leq  s + 1).
\end{equation*}
We also set  for all $n \geq k$, $i \in \{k, \ldots, n\}$ and
$t \geq 0$
\begin{equation*}
  R_i(t) = \sum_{j = 1}^{\infty} \zeta_{i, j}(t) \qquad \text{and} \qquad
  Q_i(t) = \sum_{j = 1}^{\infty} \E[V_i(t) \mid \cG_{i - j}^1],
\end{equation*}
where
\begin{equation*}
  \zeta_{i, j}(t) = \E[V_i(t) \mid \cG_{i - j + 1}]
  - \E[ V_i(t) \mid \cG_{i - j}] - \E[V_i(t) \mid \cG_{i - j}^1].
\end{equation*}
Then the following decomposition holds:
\begin{equation}
\label{eq:decomp-Sn}
S_n^2(t) = n^{-1/\beta} \sum_{i = k}^{n} R_i(t) + \Bigl( n^{-1/\beta} \sum_{i = k}^{n} Q_i(t) - \overline{S}_n(t) \Bigr) +  \overline{S}_n(t),
\end{equation}
where
\begin{equation}\label{Phibar}
  \begin{aligned}[b]
  \overline{S}_n(t) ={}& n^{-1/\beta} \sum_{i = k}^{n} (\overline{\Phi}_t(L_{i + 1} - L_{i}) - \E[\overline{\Phi}_t(L_{i + 1} - L_{i})]),
  \\
  \overline{\Phi}_t(x) \coloneqq{}& \sum_{i = 1}^{\infty} \Phi_t^2
                         (h_k(i) x).
\end{aligned}
\end{equation}
It turns out that the first two terms in \eqref{eq:decomp-Sn} are
negligible while $\overline{S}_n$ is the dominating term. More
specifically, we can use similar arguments as in
\cite[Eq.~(5.22)]{BassPowe} and deduce the following proposition from
Lemma~\ref{lem:bounds-Phi}.

\begin{proposition}
\label{prop:R-moment}
For any $\varepsilon > 0$ there exists a constant $C > 0$ such that
for all $n \in \bbN$
\begin{equation*}
  \sup_{t \geq 0} \E\biggl[\Bigl(n^{-1/\beta}\sum_{i = k}^{n} R_i(t) \Bigr)^2\biggr]
  \leq C n^{2(2 - \beta - 1/\beta) + \varepsilon}.
\end{equation*}
\end{proposition}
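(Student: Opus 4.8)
The plan is to exploit the (reverse) martingale structure of the summands $\zeta_{i,j}(t)$ and then reduce everything to the pointwise estimates on functions of type $\Phi_{t,\eta}$ collected in Lemma~\ref{lem:bounds-Phi}. The first observation is that, for fixed $i$, the family $(\zeta_{i,j}(t))_{j\ge1}$ is orthogonal: $\zeta_{i,j}(t)$ is $\cG_{i-j+1}$-measurable, and since $\cG_{i-j}$ and $\cG_{i-j}^1$ are independent and $\E[V_i(t)]=0$ one checks $\E[\zeta_{i,j}(t)\mid\cG_{i-j}]=0$ (telescoping the sum over $j$ also recovers the identity $R_i(t)=V_i(t)-Q_i(t)$ underlying \eqref{eq:decomp-Sn}, using that $\Delta_{i,k}X=\int_{-\infty}^{i}h_k(i-s)\Di L_s$ is $\cG_i$-measurable). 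Re-indexing the double sum $\sum_{i=k}^{n}\sum_{j\ge1}\zeta_{i,j}(t)$ by the innovation block $m=i-j$, I would write $\sum_{i=k}^{n}R_i(t)=\sum_{m}\Xi_m(t)$ with $\Xi_m(t)=\sum_{m<i\le n,\ i\ge k}\zeta_{i,i-m}(t)$; then $\Xi_m(t)$ is $\cG_{m+1}$-measurable with $\E[\Xi_m(t)\mid\cG_m]=0$, so $(\Xi_m(t))_m$ is a martingale difference sequence and
\begin{equation*}
  \E\Bigl[\Bigl(\sum_{i=k}^{n}R_i(t)\Bigr)^2\Bigr]=\sum_m\E[\Xi_m(t)^2].
\end{equation*}
All the cancellation between the $R_i(t)$'s is now packaged block by block — which is essential, since a naive triangle-inequality bound on $\sum_iR_i(t)$ only gives $O(n^2)$ and is far too weak.

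The second step is the single-term estimate for $\E[\zeta_{i,j}(t)^2]$. Conditioning $V_i(t)=\cos(t\Delta_{i,k}X)-\varphi_\xi(t)$ successively on $\cG_{i-j+1}$, $\cG_{i-j}$ and $\cG_{i-j}^1$, and using independence and symmetry of the increments of $L$, $\zeta_{i,j}(t)$ becomes an explicit mixed second difference: writing $A=\int_{-\infty}^{i-j}h_k(i-s)\Di L_s$ for the $\cG_{i-j}$-measurable tail of $\Delta_{i,k}X$ (an S$\alpha$S variable of scale $\sigma(\int_j^{\infty}|h_k(x)|^{\alpha}\Di x)^{1/\alpha}$) and $B=\int_{i-j}^{i-j+1}h_k(i-s)\Di L_s$ for the block part (scale $\sigma(\int_{j-1}^{j}|h_k(x)|^{\alpha}\Di x)^{1/\alpha}$), one gets $\zeta_{i,j}(t)$ as a combination of $\Phi_{t,\eta}(A+B)-\Phi_{t,\eta}(A)-\Phi_{t,\eta}(B)$-type terms plus corrections produced by the three (all comparable) scale parameters of the averaged-out pieces. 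I would then apply Lemma~\ref{lem:bounds-Phi}, mainly part~\ref{it:lem:bounds-Phi:4} with $a,x,y$ encoding the $k$-th increments of the kernel over the relevant unit intervals (and parts~\ref{it:lem:bounds-Phi:2}--\ref{it:lem:bounds-Phi:3} for the corrections), together with the elementary truncated-moment bound $\E[(1\wedge|cZ|)^2]\le C(1\wedge|c|^{\alpha})$ for symmetric $\alpha$-stable $Z$, to obtain a bound of the shape $\E[\zeta_{i,j}(t)^2]\le C\,g_{\eta_j}(t)^2(1+t)^{c_0}\,\psi(j)$, with $\psi(j)$ decaying like a fixed negative power of $j$; the precise power is dictated by the kernel tail $|h_k(l)|\le Cl^{H-1/\alpha-k}$, equivalently $|h_k(l)|^{\alpha}\lesssim l^{-\beta}$ by \eqref{defbeta}, and the factor $g_{\eta_j}(t)=\exp(-|\eta_jt|^{\alpha})$ carries the decay in $t$ that makes the eventual $\sup_{t\ge0}$ harmless.

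It then remains to add up. Summing the Step~2 bound over $j$ (orthogonality from Step~1) controls $\E[R_i(t)^2]$, and then summing over $i$ and over the innovation blocks $m$, using Minkowski's inequality inside each $\Xi_m(t)$: since $\Xi_m(t)$ contains at most $n$ increments $\zeta_{i,i-m}(t)$ whose lags $j=i-m$ range over $\{j_0(m),\dots,n-m\}$, the bound reduces $\E[\Xi_m(t)^2]$ to squared partial sums of $\sum_l l^{-\gamma}$; splitting the $m$-sum according to whether $n-m$ is large or small, and summing, is designed to produce the power $n^{2(2-\beta)}$, which after the $n^{-2/\beta}$ normalization becomes $n^{2(2-\beta-1/\beta)}$. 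The main obstacle I anticipate is exactly this last bookkeeping, uniformly in $t\ge0$: the exponent in $\psi(j)$ sits at different summability thresholds depending on where $\beta\in(1,2)$ lies, so one has to organize the $j$- and $m$-sums carefully — and, at the borderline values of $\beta$, absorb a logarithmic factor into the arbitrary $\varepsilon$ — so that the power of $n$ comes out uniformly; this is the step that mirrors the argument around \cite[Eq.~(5.22)]{BassPowe}, while the single-term estimate of Step~2 is conceptually routine once the correct conditioning has been identified.
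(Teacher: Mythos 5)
Your plan follows essentially the same route as the paper, which establishes this proposition only by reference to the martingale--block argument around \cite[Eq.~(5.22)]{BassPowe} combined with Lemma~\ref{lem:bounds-Phi}: the orthogonality of the $\zeta_{i,j}(t)$, the re-indexing by innovation block into martingale differences $\Xi_m(t)$, the representation of $\zeta_{i,j}(t)$ as a mixed second difference of $\Phi_{t,\eta}$ handled by Lemma~\ref{lem:bounds-Phi}\ref{it:lem:bounds-Phi:4}, and the truncated-moment bound $\E[(1\wedge|cZ|)^2]\le C(1\wedge|c|^{\alpha})$ are exactly the ingredients the paper delegates to the citation. Carried out, your Step~2 gives $\E[\zeta_{i,j}(t)^2]\le C\,j^{1-2\beta}$ uniformly in $t$ (one factor $j^{-\beta}$ from the block scale $\int_{j-1}^{j}|h_k|^{\alpha}$, one factor $j^{1-\beta}$ from the tail scale $\int_{j}^{\infty}|h_k|^{\alpha}$, the polynomial $t$-factors being absorbed by $g_{\eta}(t)$ since the scale of the averaged-out recent part is bounded below for $j\ge 2$), which is the correct single-term estimate.

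One caveat on the final bookkeeping, which you rightly identify as the delicate step: with $\|\zeta_{i,j}(t)\|_{2}\le C j^{1/2-\beta}$ and Minkowski inside each block, the summation yields $\E[(\sum_{i}R_i(t))^2]\le C n^{((4-2\beta)\vee 1)+\varepsilon}$. This coincides with the target $n^{4-2\beta+\varepsilon}$ only when $\beta\le 3/2$; for $\beta>3/2$ the $j$-sum converges, each of the roughly $n$ blocks contributes $O(1)$, and the method produces $Cn$, which is \emph{larger} than $n^{4-2\beta}$. So your assertion that the bookkeeping "produces the power $n^{2(2-\beta)}$" cannot hold uniformly over $\beta\in(1,2)$ by this route. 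This is not a defect of your plan relative to the paper — the cited argument has the same feature, and the resulting exponent $1-2/\beta+\varepsilon<0$ still gives the negligibility that is the proposition's only use — but to match the statement as printed you would need the $\vee\,1$ correction in the regime $\beta>3/2$, or a genuinely sharper treatment of the blocks than Minkowski provides.
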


\noindent Using the inequality $2 - x - 1/x < 0$ for all $x > 1$ on
$\beta > 1$ it follows by picking $\varepsilon > 0$ small enough that
the first term in \eqref{eq:decomp-Sn} is asymptotically
negligible. Decomposing the second term and using arguments as in the
equations (5.30), (5.31) and~(5.38) in \cite{BassPowe} we obtain the
following result.
\begin{proposition}
\label{prop:R-moment2}
  For any $\varepsilon > 0$ there exist an $r > 1$, an
  $r' > \beta \vee r$ and a constant $C > 0$ such that for all $n$ in
  $\bbN$ 
  \begin{equation*}
    \sup_{t \geq 0} \E\Bigl[\Bigl| n^{-1/\beta} \sum_{i = k}^{n} Q_i(t) - \overline{S}_n(t) \Bigr|^r\Bigr]
    \leq C \bigl(n^{r(\varepsilon + 2 - \beta - 1/\beta)} + n^{\frac{r}{r'}(1 - r'/\beta)}\bigr)
  \end{equation*}
\end{proposition}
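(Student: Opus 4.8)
\noindent The plan is to expand $Q_i(t)=\sum_{j\ge1}\E[V_i(t)\mid\cG_{i-j}^1]$ termwise and to show that, after reindexing by the unit interval carrying the relevant increment of $L$, the double sum $\sum_{i=k}^n Q_i(t)$ reproduces $n^{1/\beta}\overline{S}_n(t)$ up to remainders that are negligible in $L^r$; this follows the route taken for the analogous quantity in \cite[eqs.~(5.30),~(5.31),~(5.38)]{BassPowe}. For fixed $i$ and $j\ge1$ decompose $\Delta_{i,k}X=A_{i,j}+B_{i,j}$, where $A_{i,j}=\int_{i-j}^{i-j+1}h_k(i-s)\Di L_s$ is $\cG_{i-j}^1$-measurable and $B_{i,j}$ is independent of $\cG_{i-j}^1$, S$\alpha$S with scale $\eta_{i,j}$ given by $\eta_{i,j}^\alpha=\sigma^\alpha\bigl(\|h_k\|_\alpha^\alpha-\int_{j-1}^{j}|h_k(u)|^\alpha\Di u\bigr)$. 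Conditioning and using the symmetry of $B_{i,j}$ gives the exact identity
\begin{equation*}
  \E[V_i(t)\mid\cG_{i-j}^1]=\Phi_{t,\eta_{i,j}}(A_{i,j})+\bigl(g_{\eta_{i,j}}(t)-\varphi_{\xi}(t)\bigr),
\end{equation*}
with $\Phi_{t,\eta}$ and $g_{\eta}$ as in \eqref{eq:Phit-representation} and Lemma~\ref{lem:bounds-Phi}. Summing over $i$ and $j$, the statistic then splits into: (i) a principal term $\sum_{i}\sum_{j}\Phi_t^2\bigl(h_k(j)(L_{i-j+1}-L_{i-j})\bigr)$ which, once the $j$- and $i$-ranges are matched to those in \eqref{Phibar}, equals $n^{1/\beta}\overline{S}_n(t)$ plus boundary blocks near $i=k$ and $i=n$; (ii) a ``smooth'' error coming from replacing $A_{i,j}$ by $h_k(j)(L_{i-j+1}-L_{i-j})$ and the varying scale $\eta_{i,j}$ by $\sigma\|h_k\|_\alpha$ (together with the deterministic correction $g_{\eta_{i,j}}(t)-\varphi_{\xi}(t)$), all controlled through the smoothness of $h_k$ and the bound $|\eta_{i,j}^\alpha-\sigma^\alpha\|h_k\|_\alpha^\alpha|\le C\int_{j-1}^{j}|h_k(u)|^\alpha\Di u$ combined with Lemma~\ref{lem:bounds-Phi}\ref{it:lem:bounds-Phi:1}--\ref{it:lem:bounds-Phi:4}; and (iii) the boundary blocks themselves.

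\noindent For the centred ``smooth'' pieces I would use a von Bahr--Esseen (Rosenthal) type inequality for finitely dependent, mean-zero summands, each of which is bounded by $g_{\eta}(t)\le1$ and small: summing the resulting per-term bounds over $i$ (order $n$) and over $j$ (absolutely convergent, since $\sum_j\int_{j-1}^{j}|h_k(u)|^\alpha\Di u\le\|h_k\|_\alpha^\alpha$ and the oscillation of $h_k$ over $[j-1,j]$ is summable in the relevant power) and dividing by $n^{1/\beta}$ yields a second-moment bound of order $n^{2(2-\beta-1/\beta)+\varepsilon}$, exactly as in Proposition~\ref{prop:R-moment}; raising it to the power $r/2$ (choosing $r\in(1,2)$) and renaming $\varepsilon$ produces the first term $n^{r(\varepsilon+2-\beta-1/\beta)}$. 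For the boundary blocks — which inherit the heavy $\beta$-stable tails of the increments $L_{i+1}-L_i$ and are therefore \emph{not} integrable to the power $r'$ term by term — I would follow \cite[eq.~(5.38)]{BassPowe}: truncate the summands at level $n^{1/\beta}$, estimate the truncated part with von Bahr--Esseen at the exponent $r'\in(\beta\vee r,2)$ (such an $r'$ exists because $1<\beta<2$ in the regime $k<H+1/\alpha$) and the excess part by its first moment, which gives $\E|\cdot|^{r'}\le Cn^{1-r'/\beta}$; Jensen's inequality $\E|\cdot|^{r}\le(\E|\cdot|^{r'})^{r/r'}$ then furnishes the second term $n^{\frac{r}{r'}(1-r'/\beta)}$. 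The supremum over $t\ge0$ is harmless throughout: every bound coming out of Lemma~\ref{lem:bounds-Phi} carries a factor $g_{\eta}(t)=\exp(-|\eta t|^\alpha)\le1$, at worst against powers $(1+t)^2$, and $\sup_{t\ge0}t^m g_{\eta}(t)<\infty$, so no dependence on $t$ survives in the constants.

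\noindent The main obstacle is step (iii). Matching the reindexed double sum $\sum_{i=k}^n\sum_{j\ge1}$ against the single sum of \eqref{Phibar} over $i=k,\ldots,n$ forces one to add and subtract genuinely heavy-tailed blocks near the two endpoints of the time range, and it is exactly these blocks — not the smooth errors — that account for the exponent $\frac{r}{r'}(1-r'/\beta)$ and force the constraint $r'>\beta$. The delicate point is to show that, once these blocks are weighed against $\overline{S}_n(t)$, what remains is a centred sum whose $L^{r'}$-norm can be controlled by the truncation argument above — rather than merely its $L^{r}$-norm for $r<\beta$ — and this is where one relies most heavily on the estimates (5.30), (5.31) and~(5.38) of \cite{BassPowe}.
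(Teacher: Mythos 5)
Your proposal is correct and follows essentially the same route as the paper, which itself only sketches this proposition by deferring to equations (5.30), (5.31) and (5.38) of \cite{BassPowe}: the same termwise expansion of $Q_i(t)$ via the conditional-expectation identity, the same replacement by $\Phi_t^2\bigl(h_k(j)(L_{i-j+1}-L_{i-j})\bigr)$ controlled through Lemma~\ref{lem:bounds-Phi}, and the same split into a smooth error bounded in $L^2$ (yielding the first term of the bound after Jensen) and a heavy-tailed remainder handled by truncation and von Bahr--Esseen at an exponent $r'>\beta$ (yielding the second term). There is no gap beyond the level of detail that the paper itself omits.
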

Using again the inequality $2 - x - 1/x < 0$ for all $x > 1$, it follows
immediately that the second term in \eqref{eq:decomp-Sn} is
asymptotically negligible. Hence, $\overline{S}_n(t)$ is
asymptotically equivalent to the statistic $S_n^2(t)$, and it suffices
to analyse its finite dimensional distribution.

Consider $t_1, \ldots, t_d \in \bbR_+$. We will now recall the
limiting distribution of the vector
$(\overline{S}_n(t_1), \ldots, \overline{S}_n(t_d))$. Observing the
definition \eqref{Phibar}, we deduce the uniform convergence
\begin{equation}
\label{eq:asympt-Phi}
\sup_{t \geq 0} \bigl||x|^{-\alpha/\beta} \overline{\Phi}_t(x) - \kappa_2(t) \bigr| \to 0
\qquad \text{as $x \to -\infty$,}
\end{equation}
where $\kappa_2$ has been introduced at \eqref{eq:defkappas}. Indeed
by substituting $u = (|x|/z)^{\alpha/\beta}$ we have that
\begin{align*}
  \MoveEqLeft \sup_{t \geq 0}  \bigl||x|^{-\alpha/\beta} \overline{\Phi}_t(x) - \kappa_2(t) \bigr|
  \\
  &= \sup_{t \geq 0} \Bigl| |x|^{-\alpha/\beta} \int_{0}^{\infty} \Phi_t^2(h_k(\lfloor u \rfloor + 1) |x|) \Di u
    - \int_{0}^{\infty} \Phi_t^2(q_{H, \alpha, k} z) z^{-1 - \alpha/\beta} \Di u \Bigr|
  \\
  &= \frac{\alpha}{\beta} \sup_{t \geq 0} \Bigl| \int_{0}^{\infty} \Phi_t^2(h_k(\lfloor (|x|/z)^{\alpha/\beta} \rfloor + 1) |x|) z^{-1 - \alpha/\beta} \Di u
    - \int_{0}^{\infty} \Phi_t^2(q_{H, \alpha, k} z) z^{-1 - \alpha/\beta} \Di u \Bigr|
  \\
  &\leq \frac{\alpha}{\beta} \int_{0}^{\infty} \sup_{t \geq 0} \bigl| \Phi_t^2(h_k(\lfloor (|x|/z)^{\alpha/\beta} \rfloor + 1) x) - \Phi_t^2(q_{H, \alpha, k} z) \bigr| z^{-1 - \alpha/\beta} \Di u.
\end{align*}
By Lemma~\ref{lem:bounds-Phi}\ref{it:lem:bounds-Phi:3} the integrand
vanishes pointwise in $z$ as $x \to -\infty$ due to the asymptotics
\begin{equation}
\label{eq:asymp-hk}
h_k(x) \sim q_{H, \alpha, k} x^{-\beta/\alpha} \qquad \text{as $x \to \infty$.}
\end{equation}
Due to
Lebesgue's dominated convergence theorem it is enough to bound the
integrand uniformly in $x < - 1$. By the triangle inequality it is
enough to treat each $\Phi_t^2$-term separately. For the first term
Lemma~\ref{lem:bounds-Phi}\ref{it:lem:bounds-Phi:2} implies that
\begin{equation*}
  \sup_{t \geq 0} |\Phi_t^2(h_k(\lfloor (|x|/z)^{\alpha/\beta} \rfloor + 1) |x|)| z^{-1 - \alpha/\beta}
  \leq C \bigl(1 \wedge |h_k(\lfloor (|x|/z)^{\alpha/\beta} \rfloor + 1) x|^2 \bigr) z^{-1 - \alpha/\beta}.
\end{equation*}
For large $z$, say $z > 1$, the latter is bounded by the integrable
function $z^{-1 - \alpha/\beta} \1_{\{z > 1\}}$. For $z \in (0, 1]$ we
deduce by \eqref{eq:asymp-hk}
\begin{equation*}
  |h_k(\lfloor (|x|/z)^{\alpha/\beta} \rfloor + 1) x|^2 z^{-1 - \alpha/\beta}
  \leq C x^2 (\lfloor (|x|/z)^{\alpha/\beta} \rfloor + 1)^{-2\beta/\alpha} z^{-1 - \alpha/\beta}
  \leq C z^{1 - \alpha/\beta},
\end{equation*}
where we used that
$(|x|/z)^{\alpha/\beta} \leq \lfloor (|x|/z)^{\alpha/\beta} \rfloor +
1$. Recalling \eqref{defbeta}, we deduce that $\alpha /\beta < 2$ and
an integrable bound is obtained. The second $\Phi_t^2$-term is treated
similarly. Hence, we have \eqref{eq:asympt-Phi}.

Define the map
$\tau_{t_1,\ldots, t_d} : \bbR \to (\bbR_-)^d$ as
\begin{equation*}
  \tau_{t_1,\ldots, t_d}(x) = |x|^{\alpha/\beta} (\kappa_2(t_1), \ldots, \kappa_2(t_d)).
\end{equation*}
Following \cite[Lemma~6.6]{BassOnLi} the limit of the vector  $(\overline{S}_n(t_1), \ldots, 
\overline{S}_n(t_d))$ is determined by the L\'evy measure  
\begin{equation*}
\nu_{t_1,\ldots, t_d}(A) \coloneqq  \nu(\tau_{t_1,\ldots, t_d}^{-1}(A)),
\end{equation*}
where $A \subseteq (\bbR_-)^d$ is a Borel set and $\nu$ is the L\'evy
measure of $L$. But $\nu_{t_1,\ldots, t_d}$ is also the L\'evy measure
of the vector $(\kappa_2(t_1), \ldots, \kappa_2(t_d))S$, where the
random variable $S$ has been introduced in
Theorem~\ref{thm:weak-limit}\ref{it:thm:weak-limit:2}. This implies
the desired result.

\subsection{Finite dimensional convergence and integral functionals}
\label{sec:dist-conv-integrals}

Let $(Y^n)_{n\geq 1}$ and $Y$ be stochastic processes index by $\bbR_+$
with paths in $\cL^1(\bbR_+)$. We will  give simple sufficient conditions
for when the implication
\begin{equation}
\label{eq:dist-conv-integrals}
Y^n \fidiTo Y \quad\implies\quad \int_{0}^{\infty} Y^n_u \Di u \distTo \int_{0}^{\infty} Y_u \Di u
\end{equation}
holds true. Such a result is obviously required to obtain
Theorem~\ref{thm:main-result} from Theorem~\ref{thm:weak-limit}.
Before we state these conditions we remark that the question has
already been studied in the literature. As an example Theorem~22 in
Appendix~I of \cite{IbraStat} gives two sufficient conditions for
\eqref{eq:dist-conv-integrals} to hold, but the second condition is a
Hölder type criteria, which is not easily verifiable in our
setting. Moreover, the theorem only deals with integration over
bounded sets. The article \cite{CremOnWe} studies this question in
general, but the conditions of e.g. Lemma~1 therein are too abstract
even though we are in the case of a finite measure (the one induced by
the weight function $w$). What can be deduced from \cite{CremOnWe} is
that some kind of uniform integrability (with respect to the product
measure) is sufficient for \eqref{eq:dist-conv-integrals}.

To formulate the lemma, define for each $n, m, l \in \bbN$ the
intermediate random variables
\begin{equation*}
  X_{n, m, l} = \int_{0}^{l} Y^n_{\floor{u m}/m} \Di u \qquad\text{and}\qquad
  X_{n, l} = \int_{0}^{l} Y^n_u \Di u.
\end{equation*}

\begin{proposition}
\label{prop:fidi-int-conv}
Suppose that $(Y^n)_{n \geq 1}$ and $Y$ are continuous stochastic
processes and assume that the following conditions hold:
\begin{equation}
  \label{eq:integral-conv-conds}
  \lim_{l \to \infty} \limsup_{n \to \infty} \int_{l}^{\infty} \E[|Y^n_u|] \Di u =  0, \quad
  \lim_{m \to \infty} \limsup_{n \to \infty} \p(|X_{n, m, l} - X_{n, l}| \geq \varepsilon) = 0,
\end{equation}
for all $l, \varepsilon > 0$. Then
\eqref{eq:dist-conv-integrals} holds.
\end{proposition}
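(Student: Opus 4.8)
The plan is to prove \eqref{eq:dist-conv-integrals} by a standard three-epsilon (approximation) argument, passing from the full integral $\int_0^\infty Y^n_u\,\di u$ to a discretized integral over a bounded window $[0,l]$, for which the conclusion is immediate from finite-dimensional convergence, and then letting the discretization mesh and the window grow. Concretely, I would write
\begin{equation*}
  \int_0^\infty Y^n_u\,\di u = X_{n,m,l} + (X_{n,l} - X_{n,m,l}) + \int_l^\infty Y^n_u\,\di u,
\end{equation*}
and similarly decompose the limit object $\int_0^\infty Y_u\,\di u$ using the analogous quantities $X_{m,l}$, $X_l$ built from $Y$ (here the same hypotheses \eqref{eq:integral-conv-conds} hold with $Y^n$ replaced by $Y$, since $Y$ is the f.d.d.\ limit; alternatively one verifies them directly for $Y$, or derives them by Fatou from the $\limsup_n$ statements together with a uniform integrability consequence).

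The first main step is that, for fixed $l$ and $m$, the discretized integral $X_{n,m,l} = \frac1m\sum_{j} Y^n_{\lfloor\cdot\rfloor/m}$-type Riemann sum is a \emph{fixed continuous functional of finitely many coordinates} $(Y^n_{j/m})_{0\le j\le ml}$; hence $Y^n\fidiTo Y$ and the continuous mapping theorem give $X_{n,m,l}\distTo X_{m,l}$ as $n\to\infty$, for each fixed $l,m$. The second step is the control of the two remainder terms: the tail term is handled by Markov's inequality together with the first condition in \eqref{eq:integral-conv-conds}, namely $\p(|\int_l^\infty Y^n_u\,\di u|\ge\varepsilon)\le \varepsilon^{-1}\int_l^\infty \E[|Y^n_u|]\,\di u$, whose $\limsup_n$ vanishes as $l\to\infty$; the discretization error is exactly the quantity controlled by the second condition in \eqref{eq:integral-conv-conds}. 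For the limit process one argues the same way, using continuity of $Y$ to get $X_{m,l}\to X_l$ a.s.\ (hence in probability, hence in distribution) as $m\to\infty$ by dominated convergence on the random Riemann sums, and $X_l\to\int_0^\infty Y_u\,\di u$ in probability as $l\to\infty$.

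Putting these together, I would conclude with the usual interchange-of-limits lemma for weak convergence (e.g.\ the triangular-array version of Slutsky / Billingsley's Theorem~3.2): if $X_{n,m,l}\distTo X_{m,l}$ as $n\to\infty$ for each $m,l$, if $X_{m,l}\distTo \int_0^\infty Y$ along a suitable iterated limit in $m$ then $l$, and if for every $\varepsilon>0$
\begin{equation*}
  \lim_{l\to\infty}\lim_{m\to\infty}\limsup_{n\to\infty}\p\bigl(|X_{n,m,l} - \textstyle\int_0^\infty Y^n_u\,\di u|\ge\varepsilon\bigr)=0,
\end{equation*}
then $\int_0^\infty Y^n_u\,\di u\distTo \int_0^\infty Y_u\,\di u$. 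The displayed error bound splits, by the triangle inequality, into the discretization error $\p(|X_{n,m,l}-X_{n,l}|\ge\varepsilon/2)$ and the tail error $\p(|\int_l^\infty Y^n_u\,\di u|\ge \varepsilon/2)$, each of which is killed by one of the two hypotheses in \eqref{eq:integral-conv-conds} in the stated order of limits.

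The step I expect to require the most care is not any single estimate but the bookkeeping of the \emph{order} of the limits $n\to\infty$, then $m\to\infty$, then $l\to\infty$, and ensuring the companion statements for the limit process $Y$ are legitimately available — in particular that the continuity of $Y$ plus a uniform-integrability byproduct of \eqref{eq:integral-conv-conds} is enough to guarantee $\int_0^\infty|Y_u|\,\di u<\infty$ a.s.\ and the convergences $X_{m,l}\to X_l\to\int_0^\infty Y_u\,\di u$. Everything else (the continuous mapping theorem for the finite Riemann sums, Markov's inequality for the tail) is routine.
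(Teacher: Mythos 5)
Your proposal is correct and follows essentially the same route as the paper: the identical decomposition $\int_0^\infty Y^n_u\,\di u = X_{n,m,l}+(X_{n,l}-X_{n,m,l})+\int_l^\infty Y^n_u\,\di u$, the continuous mapping theorem applied to the finitely many coordinates in the Riemann sum to get $X_{n,m,l}\distTo \int_0^l Y_{\floor{um}/m}\,\di u$, continuity of $Y$ for the $m\to\infty$ step, and the two hypotheses plus the standard interchange-of-limits lemma for the remainders. The only point you flag as delicate — integrability of the limit's paths — is already part of the section's standing assumption that $Y$ has paths in $\cL^1(\bbR_+)$, so no extra Fatou argument is needed.
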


\begin{proof}
  Note that for each $n,m,l \in \bbN$ we have the decomposition
\begin{equation*}
  \int_{0}^{\infty} Y_u^n \Di u = X_{n, m, l} + (X_{n, l} - X_{n, m, l}) + \int_{l}^{\infty} Y_u^n \Di u.
\end{equation*}
As $Y^n \fidiTo Y$ we deduce the weak convergence
\begin{equation*}
  X_{n, m, l} \distTo[n \to \infty] Y_{m, l} \coloneqq \int_{0}^{l} Y_{\floor{u m}/m} \Di u
  \qquad \text{for each $m \in \bbN$.}
\end{equation*}
The continuity of $Y$ implies immediately that
$Y_{m, l} \asTo \int_{0}^{l} Y_u \Di u$ as $m \to \infty$. The assumptions in
\eqref{eq:integral-conv-conds} then imply the convergence
$\int_{0}^{\infty} Y_u^n \Di u \distTo \int_{0}^{\infty} Y_u \Di u$.
\end{proof}

\subsection{Proof of
  Theorem~\ref{thm:main-result}\ref{it:thm:main-result:1}
  and~\ref{it:thm:main-result:2}}

The strong consistency result of
Theorem~\ref{thm:main-result}\ref{it:thm:main-result:1} is an
immediate consequence of \eqref{eq:estimator-H} and
$\|\varphi_n - \varphi_{\xi_0}\|_{w, 2} \asTo 0$, where the latter
follows from \eqref{eq:empirical-characteric-conv} and the dominated
convergence theorem. Hence, we are left to proving
Theorem~\ref{thm:main-result}\ref{it:thm:main-result:2}. 

Recall the definition of the function
$F : \cL_w^2(\bbR_+) \times (0, 1) \times \Theta_0 \to \bbR$ at \eqref{Fdef}, 
where $\Theta_0 \subseteq (0, \infty) \times (0, 2)$ is an open neighbourhood 
of $(\sigma_0, \alpha_0)$ bounded away from $(0, 0)$. Now, the minimal
contrast estimator at \eqref{eq:minimal-contrast-estimator} can be
obtained using the criteria
\begin{equation*}
\nabla_{\theta} F(\psi, H, \theta) = 0,
\end{equation*}
which is satisfied at $(\varphi_{\xi_0}, \xi_0)$. Denote by
$\zeta(\psi, H)$ an element of $(0, 1) \times \Theta_0$ such that
\begin{equation*}
  \nabla_{\theta} F(\psi, H, \zeta(\psi, H)) = 0.
\end{equation*}
To determine the derivative of $\zeta$ we will need the infinite dimensional  implicit
function theorem, which we briefly repeat.

Consider three Banach spaces $(E_i, \|\cdot \|_i)$, $i = 1, 2, 3$, and
open subsets $U_i \subseteq E_i$, $i = 1, 2$. Let
$f : U_1 \times U_2 \to E_3$ be a Fréchet differentiable map. For a
point $(e_1, e_2) \in U_1 \times U_2$ and a direction
$(h_1, h_2) \in E_1 \times E_2$ we denote by $D_{e_1, e_2}^k f(h_k)$,
$k = 1, 2$, the Fréchet derivative of $f$ at the point $(e_1, e_2)$ in
the direction $h_k \in E_k$. Assume that
$(e_1^0, e_2^0) \in U_1 \times U_2$ satisfies the equation
$f(e_1^0, e_2^0) = 0$ and that the map
$D_{\smash{e_1^0, e_2^0}}^2 f : E_2 \to E_3$ is continuous and
invertible. Then there exists open sets $V_1 \subseteq U_1$ and
$V_2 \subseteq U_2$ with $(e_1^0, e_2^0) \in V_1 \times V_2$ and a
bijective function $G : V_1 \to V_2$ such that
\begin{equation*}
f(e_1, e_2) = 0 \quad \iff \quad G(e_1) = e_2.
\end{equation*}
Moreover, $G$ is Fréchet differentiable with derivative
\begin{equation}
\label{eq:derivative-of-implicit}
D_{e_1} G (h) = - (D^2_{e_1, G(e_1)} f)^{-1} (D^1_{e_1, G(e_1)} f(h)).
\end{equation}
We will adapt this to our setup, which corresponds to
$U_1 = \cL_w^r(\bbR_+) \times (0, 1)$, for a $r > 1$,
$E_2 = \Theta_0$, $E_3 = \bbR$ and $f = \nabla_{\theta} F$. A
straightforward calculation shows that the map
$\theta \mapsto \nabla^2_\theta F(\varphi, H, \theta)$ is
differentiable with derivative at $(\varphi_{\xi_0}, \xi_0)$
represented by the Hessian
\begin{equation*}
  D^2_{\varphi_{\xi_0}, \xi_0} \nabla_{\theta} F
  = \nabla_{\theta}^2 F(\varphi_{\xi_0}, \xi_0)
  = 2 \Bigl(\int_{0}^{\infty} \partial_{\theta_i} \varphi_{\xi_0}(t) \partial_{\theta_j} \varphi_{\xi_0}(t) w(t) \Di t \Bigr)_{i,j = 1,2}.
\end{equation*}
The linear independence of the maps
$\partial_{\theta_1} \varphi_{\xi_0}$ and
$\partial_{\theta_2} \varphi_{\xi_0}$ immediately shows the
invertibility of the Hessian. Moreover, standard theory for
convergence in $\cL^r(\bbR_+)$, $r > 1$, shows that the map
$(\varphi, H) \mapsto \nabla_\theta^2 F(\varphi, H, \cdot)$ is
continuous, which is needed to assert that $\nabla_\theta F$ is $C^1$.

The determination of the remaining derivative
$D^1_{\smash{\varphi, \xi}} \nabla_\theta F$ for a point
$(\varphi, \xi) \in \cL^r_w(\bbR_+) \times (0, 1) \times \Theta_0$ is
slightly more involved. It is given by its two components
$D^1 = (D^{1, 1}, D^{1, 2})$ corresponding to the partial
derivatives. Indeed, $D^{1, 1}_{\smash{\varphi, \xi}} \nabla_\theta F$
is the derivative with respect to the functional coordinate
$\varphi \in \cL^r_w(\bbR_+)$ and
$D^{1, 2}_{\smash{\varphi, \xi}} \nabla_\theta F$ the derivative with
respect to the Hurst parameter $H \in (0, 1)$, where
$\xi = (H, \alpha, \sigma)$. It is easily seen that
\begin{equation*}
  D^{1, 1}_{\varphi, \xi} \nabla_{\theta} F(h) = D^{1, 1}_{\xi} \nabla_{\theta} F(h)
  = - 2\int_{0}^{\infty} h(t) \nabla_{\theta} \varphi_{\xi}(t) w(t) \Di t, \qquad h \in \cL^r_w(\bbR_+).
\end{equation*}
An application of Hölder's inequality proves the continuity of the
linear map $\xi \mapsto D^{1, 1}_\xi \nabla_{\theta} F$. The second
partial derivative at $(\varphi, \xi)$ is the linear map represented
by the two dimensional vector
\begin{equation*}
  D^{1, 2}_{\varphi, \xi} \nabla_{\theta} F
  = 2 \int_{0}^{\infty} \partial_H\varphi_{\xi}(t) \nabla_{\theta} \varphi_{\xi}(t) w(t) \Di t
  - 2 \int_{0}^{\infty} (\varphi(t) - \varphi_\xi(t)) \partial_{H} \nabla_{\theta} \varphi_{\xi} (t) w(t) \Di t.
\end{equation*}
Evaluated at the point
$(\varphi_{\xi_0}, \xi_0)= (\varphi_{\xi_0}, G(\varphi_{\xi_0}, H_0))$
yields the simpler expression:
\begin{equation*}
    D^{1, 2}_{\varphi_{\xi_0}, \xi_0} \nabla_{\theta} F
  = 2 \int_{0}^{\infty} \partial_H \varphi_{\xi_0}(t) \nabla_{\theta} \varphi_{\xi_0}(t) w(t) \Di t.
\end{equation*}
Suppose we are in the case $k > H + 1/\alpha$ then we may pick
$r = 2$ in the discussion above. By Fréchet
differentiability it follows that
\begin{equation}%
\label{estimdec}%
\begin{aligned}
  \sqrt{n} ( \xi_n - \xi_0) &= \sqrt{n} (G(\varphi_n, H_n) - G(\varphi_{\xi_0}, H_0))
  \\
                            &= D_{\varphi_{\xi_0}, \xi_0} G(\sqrt{n}(\varphi_n -
                              \varphi_{\xi_0}), \sqrt{n} (H_n - H_0))
  \\
                            &+ \sqrt{n} (\| \varphi_n - \varphi_{\xi_0} \|_{w, 2} + |H_n - H_0|)
                              R(\varphi_n - \varphi_{\xi_0}, H_n - H_0),
\end{aligned}
\end{equation}
where the remainder term $R$ satisfies that
$R(\varphi_n - \varphi_{\xi_0}, H_n - H_0) \asTo 0$ as
$\|\varphi_n - \varphi_{\xi_0}\|_{w, 2} + |H_n - H_0| \asTo 0$.
Recall now the derivative of $G$ at
\eqref{eq:derivative-of-implicit}. In order to show
Theorem~\ref{thm:main-result}\ref{it:thm:main-result:2} it suffices to
prove the convergences
\begin{equation}
  \label{eq:sufficient-conv}
  \begin{aligned}
    \sqrt{n} (\| \varphi_n - \varphi_{\xi_0} \|_{w, 2} + |H_n - H_0|) &\distTo \|W \|_{w, 2} + |M_1|,
    \\
   \sqrt{n} \int_{0}^{\infty} (\varphi_n(t) - \varphi_{\xi_0}(t)) \nabla_{\theta} \varphi_{\xi_0}(t) w(t) \Di t &\distTo \int_{0}^{\infty} W_t \nabla_{\theta} \varphi_{\xi_0}(t) w(t) \Di t,
\end{aligned}
\end{equation}
where $W = (W_t)_{t \geq 0}$ has been introduced in
Theorem~\ref{thm:weak-limit}\ref{it:thm:weak-limit:1}.

We will only consider the second convergence at
\eqref{eq:sufficient-conv} since the first is shown similarly (see
also \cite[page~14]{LjunPara}). For the
conditions~\eqref{eq:integral-conv-conds} it suffices to find a
constant $C > 0$ such that
\begin{equation}
  \label{eq:var-bound-W2}
  \sup_{n \in \bbN, t \geq 0} \var(W_n^2(t)) \leq C < \infty.
\end{equation}
The identity $\Delta_{i, k} X = \int_{\bbR} h_k(i - s) \Di L_s$
together with stationarity of the increments
$\{\Delta_{i, k} X \mid i \geq k \}$ shows that
\begin{equation}
\label{eq:covariance-W2}
  |\cov(W_n^2(s),W_n^2(t))| \leq \frac{1}{2} \sum_{l \in \bbZ} |U_{h_k, h_k(l + \cdot)}(s, t) + U_{h_k, -h_k(l + \cdot)}(s, t)|.
\end{equation}
Indeed, split the series at \eqref{eq:covariance-W2} into
three terms. For $l = 0$ it follows from \eqref{eq:dependence-measure}
that
\begin{equation}
  \label{eq:leq0}
  U_{h_k, h_k}(t, t) + U_{h_k, -h_k}(t, t)
  = 1 + 2 \exp(-|2t \sigma \|h_k\|_\alpha |^{\alpha}) - 2 \exp(-2|\sigma t \|h_k\|_\alpha|^{\alpha}),
\end{equation}
which is obviously uniformly bounded in $t \geq 0$. For $l \neq 0$ with
$|l| \leq k$ the first inequality of Lemma~\ref{lem:dependence-measure}
implies that
\begin{equation}
  \label{eq:lleqk}
\begin{aligned}
  \sum_{l \in \bbZ : |l| \leq k} | U_{h_k, h_k(l + \cdot)}(t, t) + U_{h_k, -h_k(l + \cdot)}(t, t) |
  &\leq 2 t^\alpha \sum_{l \in \bbZ : |l| \leq k} \rho_l \exp(-2t^\alpha(\|h_k\|_\alpha^\alpha - \rho_l))
  \\
  &\leq C t^\alpha \exp(-2 t^\alpha(\|h_k\|_\alpha^\alpha - \max_{|l| \leq k} \rho_l)).
\end{aligned}
\end{equation}
Now by Cauchy-Schwarz inequality $\rho_l < \|h_k\|_\alpha^\alpha$ for
all $l$, and we obtain a uniform bound in $t \geq 0$. By
Lemmas~\ref{lem:dependence-measure} and~\ref{lem:rho-bound} there
exist constants $C, K > 0$ such that
\begin{align}
  \MoveEqLeft\sum_{|l| > k} |U_{h_k, h_k(l + \cdot)}(t, t) + U_{h_k, -h_k(l + \cdot)}(t, t)|
  \nonumber
  \\
&\leq 2^\alpha t^\alpha \exp(-2 t^\alpha (\|h_k\|_\alpha^\alpha - \sup_{|l| > k} \rho_l)) \sum_{|l| > k} |l|^{(\alpha(H - k) - 1)/2}
  \nonumber
  \\
&\leq C t^\alpha \exp(-K t^\alpha),
  \label{eq:lgeqk}
\end{align}
where we used the assumption $k > H + 1/\alpha$ and that
$\rho_l \to 0$ for $|l| \to \infty$ by
Lemma~\ref{lem:rho-bound}. Combining \eqref{eq:leq0}, \eqref{eq:lleqk}
and~\eqref{eq:lgeqk} we can conclude \eqref{eq:var-bound-W2}. This
completes the proof of
Theorem~\ref{thm:main-result}\ref{it:thm:main-result:2}.

\subsection{Proof of
  Theorem~\ref{thm:main-result}\ref{it:thm:main-result:3}}
\label{sec:stable-case}

As in the proof of
Theorem~\ref{thm:main-result}\ref{it:thm:main-result:2} we obtain the
decomposition \eqref{estimdec}, where the convergence rate $\sqrt{n}$
is replaced by $n^{1-1/\beta}$. Furthermore, as in
\eqref{eq:sufficient-conv}, it suffices to prove that for some
$r \in (1, 2)$ then as $n \to \infty$
\begin{align*}
  n^{1-1/\beta} (\norm{\varphi_n - \varphi_{\xi_0}}_{w, r} + |H_n - H_0|)
  &\distTo \|\kappa_2 S \|_{w, r} + |M_2|,
  \\
  n^{1-1/\beta} \int_{0}^{\infty}  (\varphi_n(t) - \varphi_{\xi_0}(t)) \nabla_{\theta} \varphi_{\xi_0}(t) w(t) \Di t &\distTo S \int_{0}^{\infty} \kappa_2(t)  \nabla_{\theta} \varphi_{\xi_0}(t) w(t) \Di t.
\end{align*}
As before in the Gaussian case it is enough to provide uniform bounds
(in $n$ and $t$) on the moments in order to use
Proposition~\ref{prop:fidi-int-conv}. 

Recall that the dominating term in \eqref{eq:decomp-Sn} is given by
\begin{equation*}
  \overline{S}_n(t)
  = n^{-1/\beta} \sum_{i = k}^{n} (\overline{\Phi}_t(L_{i + 1} - L_i) - \E[\overline{\Phi}_t(L_{i + 1} - L_i)]).
\end{equation*}
Inspired by the classical case of i.i.d. random variables, each in the
domain of attraction of a stable distribution, we shall prove the
following result.

\begin{proposition}
  \label{finalprop}
For any $r \in (0, \beta)$ we have that 
\begin{equation*}
  \sup_{n \in \bbN, t \geq 0} \E[\abs{\overline{S}_n(t)}^r] < \infty.
\end{equation*}
\end{proposition}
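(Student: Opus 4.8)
The plan is to exploit that, by the independence and stationarity of the increments of $L$, the random variables $\zeta_i(t):=\overline{\Phi}_t(L_{i+1}-L_i)-\E[\overline{\Phi}_t(L_{i+1}-L_i)]$, $i=k,\dots,n$, are i.i.d.\ and centred, so that $\overline{S}_n(t)=n^{-1/\beta}\sum_{i=k}^n\zeta_i(t)$ is a normalised sum of i.i.d.\ terms lying in the domain of attraction of a $\beta$-stable law. The assertion then becomes the classical fact that such sums have bounded $L^r$-moments for $r<\beta$, which I would prove by truncating at level $n^{1/\beta}$; the only additional requirement is that every constant be tracked so as to be uniform in $t$.

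The decisive preliminary step is a $t$-uniform tail estimate for $\zeta_i(t)$. First I would establish $|\overline{\Phi}_t(x)|\le C\,(1+|x|^{\alpha/\beta})$ for all $x\in\bbR$ and $t\ge 0$: for $|x|$ large this follows from the uniform limit \eqref{eq:asympt-Phi}, the evenness of $\overline{\Phi}_t$ (which reduces $x\to+\infty$ to $x\to-\infty$) and the boundedness of $\kappa_2$ (clear from \eqref{eq:defkappas}, which gives $|\kappa_2(t)|=C\,t^{\alpha/\beta}\exp(-|\sigma\|h_k\|_\alpha t|^\alpha)$); for $|x|$ bounded it follows from $|\Phi^2_t(y)|\le g_\sigma(t)|ty|^2$ with $g_\sigma(t):=\exp(-|\sigma\|h_k\|_\alpha t|^\alpha)$ together with $\sum_{i\ge1}|h_k(i)|^2<\infty$ — a consequence of the asymptotics \eqref{eq:asymp-hk} and $\beta>\alpha/2$ — since then $|\overline{\Phi}_t(x)|\le g_\sigma(t)t^2x^2\sum_i|h_k(i)|^2$ and $\sup_{t\ge0}g_\sigma(t)t^2<\infty$. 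Since $\beta>1$ (because $k\ge1>H$), we have $\alpha/\beta<\alpha$, hence $\E[|L_{i+1}-L_i|^{\alpha/\beta}]<\infty$ and $\sup_{t\ge0}|\E[\overline{\Phi}_t(L_{i+1}-L_i)]|<\infty$. Feeding the S$\alpha$S tail $\p(|L_{i+1}-L_i|>z)\le Cz^{-\alpha}$ into the displayed bound and absorbing the (uniformly bounded) centring constant yields $\sup_{t\ge0}\p(|\zeta_i(t)|>y)\le Ky^{-\beta}$ for all $y>0$; this is the only probabilistic input used afterwards.

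With this in hand I would fix $n$, set $M=n^{1/\beta}$, write $\zeta_i=\zeta_i(t)$, $\mu_M:=\E[\zeta_i\1_{\{|\zeta_i|\le M\}}]=-\E[\zeta_i\1_{\{|\zeta_i|>M\}}]$, and decompose
\[
  \overline{S}_n(t)=n^{-1/\beta}\sum_{i=k}^n A_i\;+\;n^{-1/\beta}(n-k+1)\mu_M\;+\;n^{-1/\beta}\sum_{i=k}^n\zeta_i\1_{\{|\zeta_i|>M\}},\qquad A_i:=\zeta_i\1_{\{|\zeta_i|\le M\}}-\mu_M .
\]
By Minkowski's inequality (or subadditivity of $x\mapsto x^r$ when $r\le1$) it suffices to bound the $L^r$-norm of each term uniformly in $n$ and $t$. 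The tail bound gives $\E[\zeta_i^2\1_{\{|\zeta_i|\le M\}}]\le CM^{2-\beta}$ and $|\mu_M|\le CM^{1-\beta}$, so $\E[(n^{-1/\beta}\sum_{i}A_i)^2]\le n^{-2/\beta}\,n\,CM^{2-\beta}=C$ and $|n^{-1/\beta}(n-k+1)\mu_M|\le n^{1-1/\beta}CM^{1-\beta}=C$, which disposes of the first two terms for all $r\le2$.

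The heavy part is the crux. Since $|n^{-1/\beta}\sum_i\zeta_i\1_{\{|\zeta_i|>M\}}|\le n^{-1/\beta}\sum_i\xi_i$ with $\xi_i:=|\zeta_i|\1_{\{|\zeta_i|>M\}}\ge0$ and $\p(\xi_i>y)\le K\min(M^{-\beta},y^{-\beta})$, a short computation gives $\E[\xi_i^{s}]\le C\,n^{s/\beta-1}$ for $s\in\{1,r-1,r\}$. For $r\le1$, subadditivity of $x\mapsto x^r$ yields $\E[(\sum_i\xi_i)^r]\le n\,\E[\xi_i^r]\le Cn^{r/\beta}$ at once. For $1<r<\beta$ the naive subadditivity is false, and the point is to use instead $(\sum_i\xi_i)^{r-1}\le\sum_j\xi_j^{r-1}$ (valid because $r-1\in(0,1)$), so that
\[
  \Bigl(\sum_i\xi_i\Bigr)^{r}\le\sum_i\xi_i^{r}+\sum_{i\neq j}\xi_i\xi_j^{r-1},
\]
whence $\E[(\sum_i\xi_i)^r]\le n\,\E[\xi_i^r]+n^2\E[\xi_i]\E[\xi_i^{r-1}]\le Cn^{r/\beta}$ — the bookkeeping that encodes the fact that the sum is carried by a single large summand. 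Dividing by $n^{r/\beta}$ finishes the estimate, and combining the three terms proves the proposition. I expect this heavy-tail estimate to be the main obstacle: the $L^2$ method cannot reach it, and global $L^r$-moment inequalities (von Bahr--Esseen and the like) are too lossy under the normalisation $n^{1/\beta}$ rather than $n^{1/r}$; the required sharpening comes precisely from exploiting the maximal-term behaviour in the regime $1<r<\beta$.
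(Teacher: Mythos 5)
Your proof is correct, and its skeleton coincides with the paper's: both truncate $\zeta_i(t)=\overline{\Phi}_t(L_{i+1}-L_i)-\E[\overline{\Phi}_t(L_{i+1}-L_i)]$ at level $n^{1/\beta}$, control the truncated part through its second moment, and treat the heavy tail separately, with the growth exponent $\alpha/\beta$ of $\overline{\Phi}_t$ (via \eqref{eq:asympt-Phi} and the boundedness of $\kappa_2$) supplying the uniformity in $t$. The two implementations differ in instructive ways. First, you distil everything into a single $t$-uniform tail estimate $\p(|\zeta_i(t)|>y)\leq Ky^{-\beta}$ and then compute all truncated moments from it; the paper instead integrates $|\overline{\Phi}_t(x)-E_t|$ directly against the stable density bound $p_\alpha(x)\leq C(1+|x|)^{-1-\alpha}$ in each of the regions $x\in(x_0,0)$ and $x<x_0$. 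Your route is cleaner and isolates the only probabilistic input; the paper's is more hands-on but proves the same estimates. Second, for the heavy part your combinatorial expansion $(\sum_i\xi_i)^r\leq\sum_i\xi_i^r+\sum_{i\neq j}\xi_i\xi_j^{r-1}$ (using subadditivity of $x\mapsto x^{r-1}$ and independence) is a valid, elementary substitute for what the paper actually does, namely apply the von Bahr--Esseen inequality to the centred truncated-tail sum to get $\E[|n^{-1/\beta}T_{n,t,2}|^r]\leq C n^{1-r/\beta}\E[|Z_{n,t,k}|^r]\leq C$. Your parenthetical claim that von Bahr--Esseen is ``too lossy under the normalisation $n^{1/\beta}$'' is therefore mistaken: it is lossy only if applied to the untruncated sum, but after truncation at $n^{1/\beta}$ it yields exactly the right order $n^{r/\beta-1}$ for $\E[|Z_{n,t,k}|^r]$, which is how the paper concludes. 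Neither difference affects correctness; both proofs are complete.
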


\begin{proof}
  By Jensen's inequality it suffices to consider $r > 1$ (indeed
  $\beta \in (1,2)$). Recall the relation
  $\Phi_{t, \sigma \norm{h_k}_{\alpha}}(x) = \Phi_t^2(x) =
  \exp(-\abs{\sigma t \norm{h_k}_\alpha}^{\alpha})(\cos(t x) - 1)$
  together with
\begin{equation*}
\overline{\Phi}_t(x) = \sum_{i = 1}^{\infty} \Phi_t^2(h_k(i) x).
\end{equation*}
Note that for all $x$ in some bounded set by
Lemma~\ref{lem:bounds-Phi}\ref{it:lem:bounds-Phi:2}:
\begin{align*}
  \sup_{t \geq 0} \abs{\overline{\Phi}_t(x)}
  &\leq \sup_{t \geq 0} \exp(-\abs{\sigma \norm{h_k}_{\alpha} t}^{\alpha}) \sum_{i = 1}^{\infty} (1 \wedge (\abs{x t h_k(i)} )^2)
  \\
  &\leq \sup_{t \geq 0} \exp(-\abs{\sigma \norm{h_k}_{\alpha} t}^{\alpha}) (t + 1)^2 (\abs{x} + 1)^2 \sum_{i = 1}^{\infty} (1 \wedge \abs{h_k(i)}^2) \leq C < \infty.
\end{align*}
By \eqref{eq:asympt-Phi} there exists $x_0 < -1$ such that for all
$t > 0$
\begin{equation}
\label{eq:unif-close}
\abs[\bigg]{\frac{\abs{\overline{\Phi}_t(x)}}{\abs{x}^q} - \abs{\kappa_2(t)}} \leq 1 \qquad \text{for all $x < x_0$},
\end{equation}
where $q = \alpha/\beta$, $\beta = 1 + \alpha(k - H) \in (1, 2)$
and $\kappa_2(t) = Kt^q \exp(-\abs{t \norm{h_k}_\alpha \sigma}^\alpha)$
with $K<0$. For shorter notation we write
$D_i = L_{i + 1} - L_i$ to denote the $i$th increment of $L$. Since
$\E[\overline{\Phi}_t(D_1)]$ is bounded in $t$ we may
replace $\overline{\Phi}_t(x)$ with
$\overline{\Phi}_t(x) -
\E[\overline{\Phi}_t(D_1)]$ in \eqref{eq:unif-close} if $x_0$ is chosen large enough. 
Define
for each $t \geq 0$, $n \in \bbN$ and $i \in \{k, \ldots, n\}$
\begin{align*}
  Y_{n, t, i} &= (\overline{\Phi}_t(D_i) - \E[\overline{\Phi}_t(D_i)]) \1_{ \{\abs{ \overline{\Phi}_t(D_i) - \E[\overline{\Phi}_t(D_i)]} \leq n^{1/\beta}\} },
  \\
  Z_{n, t, i} &= (\overline{\Phi}_t(D_i) - \E[\overline{\Phi}_t(D_i)]) \1_{\{\abs{ \overline{\Phi}_t(D_i) - \E[\overline{\Phi}_t(D_i)]} > n^{1/\beta}\} }.
\end{align*}
We have the decomposition
\begin{align*}
  T_{n, t} &\coloneqq \sum_{i = k}^{n} (\overline{\Phi}_t(D_i) - \E[\overline{\Phi}_t(D_i)])
             = \sum_{i = k}^{n} (Y_{n, t, i} - \E[Y_{n, t, i}]) + \sum_{i = k}^{n} (Z_{n, t, i} - \E[Z_{n, t, i}])
  \\
           &\eqqcolon T_{n, t, 1} + T_{n, t, 2}.
\end{align*}
The proposition then asserts that
\begin{equation*}
  \sup_{n \in \bbN, t \geq 0} \E[\abs{n^{-1/\beta} T_{n, t} }^r] < \infty \qquad \text{for all $r \in (1, \beta)$.}
\end{equation*}
To prove this we observe that 
\begin{equation*}
  \E[\abs{n^{-1/\beta} T_{n, t}}^r] \leq C_r \bigl( \E[\abs{n^{-1/\beta} T_{n, t, 1}}^r] +  \E[\abs{n^{-1/\beta} T_{n, t, 2}}^r] \bigr).
\end{equation*}
For the first term we obtain the inequality
\begin{equation*}
  \E[\abs{n^{-1/\beta} T_{n, t, 1}}^r]
  \leq \E\bigl[ \abs{ n^{-1/\beta} T_{n, t, 1} }^2 \bigr]^{r/2}
  \leq C_r (n^{1 - 2/\beta} \E[ \abs{Y_{n, t, k}}^2 ])^{r/2}.
\end{equation*}
For short notation let $E_t = \E[\overline{\Phi}_t(D_1)]$, which is
uniformly bounded in $t \geq 0$. Additionally let $p_{\alpha}$ denote
the density of an S$\alpha$S distribution and recall that
$p_\alpha(x) \leq C (1 + \abs{x})^{-1 - \alpha}$ for all $x \in \bbR$,
cf. \cite[Theorem~1.1]{WataAsym}. We decompose
$\E[\abs{Y_{n, t, k}}^2]$ into two regions corresponding to
\eqref{eq:unif-close}:
\begin{align*}
  n^{1 - 2/\beta} \E[\abs{Y_{n, t, k}}^2]
  &= 2 n^{1 - 2/\beta} \int^{0}_{x_0} \abs{\overline{\Phi}_t(x) - E_t}^2 \1_{ \{\abs{ \overline{\Phi}_t(x) - E_t} \leq n^{1/\beta}\} } p_{\alpha}(x) \Di x
  \\
  &+ 2 n^{1 - 2/\beta} \int^{x_0}_{-\infty} \abs{\overline{\Phi}_t(x) - E_t}^2 \1_{ \{\abs{ \overline{\Phi}_t(x) - E_t} \leq n^{1/\beta\}} } p_{\alpha}(x)  \Di x.
\end{align*}
The first term vanishes as $n \to \infty$ since $2/\beta > 1$ and the
fact that $\abs{\overline{\Phi}_t(x) - E_t}$ is bounded 
for all $t$ and $x \in (x_0, 0)$. The second term is further  split into two terms:
\begin{align*}
\MoveEqLeft  n^{1 - 2/\beta} \int_{-\infty}^{x_0} \abs{ \overline{\Phi}_t(x) - E_t}^2 \1_{\{\abs{\overline{\Phi}_t(x) - E_t} \leq n^{1/\beta}\}} p_{\alpha}(x) \Di x
  \\
&= n^{1 - 2/\beta} \int_{-\infty}^{x_0} \abs{ \overline{\Phi}_t(x) - E_t}^2 \1_{\{\abs{x}^q < \abs{\overline{\Phi}_t(x) - E_t} \leq n^{1/\beta}\}} p_{\alpha}(x) \Di x
\\
&+ n^{1 - 2/\beta} \int_{-\infty}^{x_0} \abs{ \overline{\Phi}_t(x) - E_t}^2 \1_{\{\abs{\overline{\Phi}_t(x) - E_t} \leq n^{1/\beta} \wedge \abs{x}^q\}} p_{\alpha}(x) \Di x.
\end{align*}
Using \eqref{eq:unif-close} and the boundedness of $\kappa_2$ on the
first term we have that
\begin{align*}
  \MoveEqLeft n^{1 - 2/\beta} \int_{-\infty}^{x_0} \abs{ \overline{\Phi}_t(x) - E_t}^2 \1_{\{\abs{x}^q < \abs{\overline{\Phi}_t(x) - E_t} \leq n^{1/\beta}\}} p_{\alpha}(x) \Di x
  \\
&\leq n^{1 - 2/\beta} \int_{-x_0}^{\infty} (\abs{\kappa_2(t)} + 1)^2 x^{2 q} \1_{\{x^q \leq n^{1/\beta}\}} p_\alpha(x) \Di x
  \\
&\leq C_q n^{1 - 2/\beta} \int_{-x_0}^{n^{1/q\beta}} x^{2q - 1 - \alpha} \Di x
  = C_q n^{1- 2/\beta}(1 + n^{(2q - \alpha)/q\beta}) \leq C_q.
\end{align*}
The second term contains a similar consideration, indeed
\begin{align*}
  \MoveEqLeft  n^{1 - 2/\beta} \int_{-\infty}^{x_0} \abs{ \overline{\Phi}_t(x) - E_t}^2 \1_{\{\abs{\overline{\Phi}_t(x) - E_t} \leq n^{1/\beta} \wedge \abs{x}^q\}} p_{\alpha}(x) \Di x
  \leq \int_{-x_0}^{\infty} (n^{1/\beta} \wedge x^q)^2 p_\alpha(x) \Di x
  \\
&= n^{1 - 2/\beta} \int_{-x_0}^{n^{1/q\beta}} x^{2q} p_\alpha(x) \Di x
  + n^{1 - 2/\beta} \int_{n^{1/q\beta}}^{\infty} n^{2/\beta} p_\alpha(x) \Di x \leq C_q.
\end{align*}
In the next step we treat the term $T_{n, t, 2}$. Note first that
by the von Bahr-Esseen inequality we obtain
\begin{equation*}
  \E[\abs{n^{-1/\beta} T_{n, t, 2}}^r] \leq C_r n^{1 - r/\beta} \E[\abs{Z_{n, t, k}}^r].
\end{equation*}
Decomposing as above we have that
\begin{align*}
  n^{1 - r/\beta} \E[\abs{Z_{n, t, k}}^r]
  &= 2 n^{1 - r/\beta} \int^{0}_{x_0} \abs{\overline{\Phi}_t(x) - E_t}^{r} \1_{ \{\abs{ \overline{\Phi}_t(x) - E_t} > n^{1/\beta}\} } p_{\alpha}(x) \Di x
  \\
  &+ 2 n^{1 -r/\beta} \int_{-\infty}^{x_0} \abs{\overline{\Phi}_t(x) - E_t}^{r} \1_{ \{\abs{ \overline{\Phi}_t(x) - E_t} > n^{1/\beta}\} } p_{\alpha}(x)  \Di x.
\end{align*}
For the first term we recall that $\overline{\Phi}_t(x)$ is bounded
uniformly in $t$ when $x$ lies in a bounded set, hence
$n^{1/\beta} > \abs{\overline{\Phi}_t(x) - E_t}$ for all sufficiently
large $n$, independent of $x \in (0,x_0)$ and $t \geq 0$, so the first
term is zero for sufficiently large $n$.

The last term requires more computations. Due to \eqref{eq:unif-close}
and the fact that $\kappa_2$ is bounded it follows that
\begin{align*}
  \MoveEqLeft n^{1 - r/\beta} \int_{-\infty}^{x_0} \abs{\overline{\Phi}_t(x) - E_t}^{r} \1_{ \{\abs{ \overline{\Phi}_t(x) - E_t} > n^{1/\beta}\} } p_{\alpha}(x)  \Di x
  \\
&\leq C n^{1 - r/\beta} \int_{-\infty}^{x_0} (\abs{\kappa_2(t)} + 1)^r \abs{x}^{r q} \1_{ \{(\abs{\kappa_2(t)} + 1) \abs{x}^q > n^{1/\beta}\} } p_{\alpha}(x) \Di x
  \\
&\leq C n^{1 - r/\beta} \int_{-\infty}^{x_0} \abs{x}^{r q - 1 - \alpha} \1_{\{\abs{x} > n^{1/q \beta}/K \}} \Di x
  \\
&= C n^{1 - r/\beta} \int^{\infty}_{n^{1/\alpha}/K } x^{r q - 1 - \alpha} \Di x
  \leq C,
\end{align*}
where we used that $rq - \alpha < 0$ since $r < \beta$.
\end{proof}

\noindent
Combining Propositions~\ref{prop:R-moment}--\ref{finalprop} we finally
complete the proof of
Theorem~\ref{thm:main-result}\ref{it:thm:main-result:3}.

\clearpage

\section{Tables}
\label{sec:tables}

\begin{table}[htpb!]
  \begin{minipage}{0.45\linewidth}
    \caption{Absolute value of the bias based on $n = \num{1000}$,
      $k = 2$, $p = -0.4$, $\nu = 0.1$ and $\sigma_0 = 0.3$ for the
      minimal contrast estimator}
\label{tab:abs-bias-est-1000}
\centering%
\begin{tabular}{l l *{3}{S[group-separator={}]}}
  \toprule
  $H_0$ & $\alpha_0$ & \multicolumn{1}{c}{$\sigma_n$} & \multicolumn{1}{c}{$\alpha_n$} & \multicolumn{1}{c}{$H_n(p, k)$}
  \\
  \midrule
  0.2 & \textbf{0.4} & 6.11528766806447 & 0.40780935809849 & 0.201534114406659
  \\ 
        & 0.6 & 0.0997920838339353 & 0.00758888133430636 & 0.181844674118716
  \\ 
        & 0.8 & 0.0416576040294591 & 0.0018170528212031 & 0.134583706736434
  \\ 
        & 1 & 0.0539747747690746 & 1.95088288516879e-05 & 0.113670978295413
  \\ 
        & 1.2 & 0.0451027056347307 & 0.000768928702441876 & 0.0881080203851828
  \\ 
        & 1.4 & 0.0367100883073913 & 0.00446776349498455 & 0.0702200043064256
  \\ 
        & 1.6 & 0.0308579424298173 & 0.020686173093613 & 0.0705931712654161
  \\ 
        & 1.8 & 0.011747693750663 & 0.0535148407681887 & 0.0643482859989043
  \\ 
  \midrule
  0.4  & \textbf{0.4} & 5.4235985606574 & 0.328143625225551 & 0.10402554735686
  \\ 
        & \textbf{0.6} & 0.0773963498130176 & 0.00350651747936461 & 0.123674360784111
  \\ 
        & 0.8 & 0.0472012481879438 & 0.005120351956535 & 0.0747580079950212
  \\ 
        & 1 & 0.0322343116850873 & 0.000312616999742026 & 0.0500901597618562
  \\ 
        & 1.2 & 0.0275704144171758 & 0.000952014549291298 & 0.0351239934509173
  \\ 
        & 1.4 & 0.0114503702578297 & 0.00904576172483804 & 0.0188717550722743
  \\ 
        & 1.6 & 0.00718847774422977 & 0.0486757169944044 & 0.0261154496906544
  \\ 
        & 1.8 & 0.0496134192121276 & 0.122866212975795 & 0.0192878904957188
  \\ 
  \midrule
  0.6  & \textbf{0.4} & 3.61725666013326 & 0.200748676561363 & 0.0637540701337884
  \\ 
        & \textbf{0.6} & 0.0492867592316138 & 0.00583950463445159 & 0.0608112173117509
  \\ 
        & 0.8 & 0.0375493868444194 & 0.00358585899204208 & 0.0401964970469688
  \\ 
        & 1 & 0.0328143052391076 & 0.00198956792610263 & 0.023475714900871
  \\ 
        & 1.2 & 0.0115397222084812 & 0.0161369017308843 & 0.0186835893581606
  \\ 
        & 1.4 & 0.00668341346782603 & 0.0327298030064071 & 0.00652475618396034
  \\ 
        & 1.6 & 0.0366272511617723 & 0.0933433632587354 & 0.0111688320698823
  \\ 
        & 1.8 & 0.112945464591753 & 0.250109587269972 & 0.00893913432966442
  \\ 
  \midrule
  0.8  & \textbf{0.4} & 1.69064774181509 & 0.086617951280715 & 0.0203933408544415
  \\ 
        & \textbf{0.6} & 0.0483287534287389 & 0.0027305095109453 & 0.0403559465809294
  \\ 
        & \textbf{0.8} & 0.0513635928683994 & 0.00252464106972363 & 0.0302594769239117
  \\ 
        & 1 & 0.0345338608312343 & 0.00272201453336093 & 0.0167477861547928
  \\ 
        & 1.2 & 0.00531187592719367 & 0.0214864421598182 & 0.0106950113151211
  \\ 
        & 1.4 & 0.0160127341102578 & 0.0510512184462523 & 0.00565770101145122
  \\ 
        & 1.6 & 0.0701925307642103 & 0.142488294105705 & 0.00137226627006914
  \\ 
        & 1.8 & 0.172402661356385 & 0.391454035999514 & 0.0016089240533326
  \\ 
  \bottomrule
\end{tabular}
\end{minipage}%
\hfill%
\begin{minipage}{0.45\linewidth}
  \caption{Standard deviation based on $n = \num{1000}$, $k = 2$,
    $p = -0.4$, $\nu = 0.1$ and $\sigma_0 = 0.3$ for the
      minimal contrast estimator}
  \centering%
\label{tab:std-est-1000}
\begin{tabular}{l l *{3}{S[group-separator={}]}}
  \toprule
  $H_0$ & $\alpha_0$ & \multicolumn{1}{c}{$\sigma_n$} & \multicolumn{1}{c}{$\alpha_n$} & \multicolumn{1}{c}{$H_n(p, k)$}
  \\
  \midrule
  0.2  & \textbf{0.4} & 7.41042581291778 & 0.458541106519428 & 0.160884198224583
  \\ 
        & 0.6 & 0.428934926607785 & 0.0925333774972922 & 0.162209611752009
  \\ 
        & 0.8 & 0.244507102913203 & 0.0680756434262012 & 0.146843029332103
  \\ 
        & 1 & 0.204718694184439 & 0.0773307805746779 & 0.135523718270079
  \\ 
        & 1.2 & 0.183134585482617 & 0.0922811912530568 & 0.128295596084238
  \\ 
        & 1.4 & 0.173338410536434 & 0.111831033533225 & 0.124415483966601
  \\ 
        & 1.6 & 0.155148840922753 & 0.137649906210743 & 0.121636048052211
  \\ 
        & 1.8 & 0.141537127518805 & 0.146112494398493 & 0.120958267540089
  \\ 
  \midrule
  0.4  & \textbf{0.4} & 7.93134366583018 & 0.440657826266336 & 0.166351346459474
  \\ 
        & \textbf{0.6} & 0.361923302209863 & 0.0717212743312573 & 0.172282948595995
  \\ 
        & 0.8 & 0.229462187364255 & 0.0685701827560679 & 0.154645655537333
  \\ 
        & 1 & 0.186419240839882 & 0.0868704197066231 & 0.150323313183463
  \\ 
        & 1.2 & 0.16983688621395 & 0.115130365694339 & 0.144812405431106
  \\ 
        & 1.4 & 0.158955772743318 & 0.15834314691447 & 0.138390356017191
  \\ 
        & 1.6 & 0.152656202558975 & 0.196350214371991 & 0.144687703357566
  \\ 
        & 1.8 & 0.125485736691118 & 0.195873740196142 & 0.139719517396656
  \\ 
  \midrule
  0.6  & \textbf{0.4} & 7.35887288651317 & 0.34971687147606 & 0.192302067656162
  \\ 
        & \textbf{0.6} & 0.256143369417277 & 0.065190106057068 & 0.17289012832241
  \\ 
        & 0.8 & 0.194787549118206 & 0.0675876092806175 & 0.157946098741844
  \\ 
        & 1 & 0.181675194243656 & 0.102979949796495 & 0.153820659188558
  \\ 
        & 1.2 & 0.168310489505967 & 0.141098951159374 & 0.140589194727408
  \\ 
        & 1.4 & 0.165118148853579 & 0.197420527887819 & 0.149754582428995
  \\ 
        & 1.6 & 0.15594573786182 & 0.243111247705317 & 0.146508760490694
  \\ 
        & 1.8 & 0.117131627017047 & 0.227088240601706 & 0.140966084696083
  \\ 
  \midrule
  0.8  & \textbf{0.4} & 5.07789744674506 & 0.218332280411531 & 0.184848686844498
  \\ 
        & \textbf{0.6} & 0.253899072928444 & 0.0632487308235487 & 0.175825911507481
  \\ 
        & \textbf{0.8} & 0.20137084063814 & 0.0747552431698255 & 0.152321793871011
  \\ 
        & 1 & 0.186424012898282 & 0.113992241102561 & 0.1545778713509
  \\ 
        & 1.2 & 0.184337663418976 & 0.176534505928454 & 0.15334933371098
  \\ 
        & 1.4 & 0.174935935224409 & 0.235345358830952 & 0.142363213241829
  \\ 
        & 1.6 & 0.140245811603672 & 0.241335044774823 & 0.141669828892496
  \\ 
        & 1.8 & 0.115602947077765 & 0.234208127513804 & 0.136063606637985
  \\ 
  \bottomrule
\end{tabular}
\end{minipage}
\end{table}

\begin{table}
  \begin{minipage}{0.45\linewidth}
    \caption{Absolute value of the bias based on $n = \num{10000}$,
      $k = 2$, $p = -0.4$, $\nu = 0.1$ and $\sigma_0 = 0.3$ for the
      minimal contrast estimator}
    \label{tab:abs-bias-est}
    \centering%
    \begin{tabular}{l l *{3}{S[group-separator={}]}}
      \toprule%
      $H_0$ & $\alpha_0$ & \multicolumn{1}{c}{$\sigma_n$} & \multicolumn{1}{c}{$\alpha_n$} & \multicolumn{1}{c}{$H_n(p, k)$}
      \\
      \midrule
      0.2  & \textbf{0.4} & 2.27992311452631 & 0.141477363340558 & 0.256308555584469
      \\ 
            & 0.6 & 0.0105572793676621 & 0.00183317141348813 & 0.165167959596669
      \\ 
            & 0.8 & 0.00317130441757359 & 0.00248554732554923 & 0.113388301533688
      \\ 
            & 1 & 0.00461126334365977 & 0.00052279671821219 & 0.0847511035925393
      \\ 
            & 1.2 & 0.00780929665020615 & 0.00115628690355876 & 0.0635890368642597
      \\ 
            & 1.4 & 0.0110153061180726 & 0.00149755387138249 & 0.0516469324189002
      \\ 
            & 1.6 & 0.0107281585329711 & 0.00122686795671139 & 0.0412873292769312
      \\ 
            & 1.8 & 0.007671980667453 & 0.000960570071134368 & 0.0317861856208238
      \\ 
      \midrule
      0.4  & \textbf{0.4} & 0.662903752876743 & 0.0457881099183104 & 0.176593442098076
      \\ 
            & \textbf{0.6} & 0.00806127483429993 & 0.000295442282905916 & 0.102215388677313
      \\ 
            & 0.8 & 0.00500404957224098 & 0.00172704903753494 & 0.0600176172953674
      \\ 
            & 1 & 0.00178056176822937 & 0.000989156823092539 & 0.0365556998380076
      \\ 
            & 1.2 & 0.00519293019094857 & 0.00180450367176329 & 0.0233776670663156
      \\ 
            & 1.4 & 0.00678114922642133 & 0.00189983829137586 & 0.0169551078016329
      \\ 
            & 1.6 & 0.00974261380766353 & 0.0065433384748777 & 0.0103432936067711
      \\ 
            & 1.8 & 0.0123981278768924 & 0.0145058863658259 & 0.00408513019335098
      \\ 
      \midrule
      0.6  & \textbf{0.4} & 0.151721156306062 & 0.0171174208048925 & 0.112956923359921
      \\ 
            & \textbf{0.6} & 0.00212393345808329 & 0.00101432896754304 & 0.0588116779169856
      \\ 
            & 0.8 & 0.00670397392200299 & 0.00145853457712767 & 0.0313711643729745
      \\ 
            & 1 & 0.00567731649170062 & 0.00109199660291098 & 0.0155763867382522
      \\ 
            & 1.2 & 0.00254937865865262 & 0.00078291413296926 & 0.00698787774263924
      \\ 
            & 1.4 & 0.00317992876454468 & 0.000975428453787155 & 0.0034149867214798
      \\ 
            & 1.6 & 0.00812426410328695 & 0.00499159238105709 & 0.000780802174706853
      \\ 
            & 1.8 & 0.0142895955241071 & 0.0125853027782491 & 0.00170889568254926
      \\ 
      \midrule
      0.8  & \textbf{0.4} & 0.0574826907292774 & 0.00596824344193844 & 0.079291016352223
      \\ 
            & \textbf{0.6} & 0.005283617044454 & 0.00129175010761531 & 0.0335310924589099
      \\ 
            & \textbf{0.8} & 0.00333206176419313 & 0.000185366666490064 & 0.013050927405611
      \\ 
            & 1 & 0.00488108154474771 & 0.000373064649762268 & 0.0065139690602887
      \\ 
            & 1.2 & 2.60005989485496e-05 & 0.000583475507690175 & 0.00142778292335807
      \\ 
            & 1.4 & 0.00301624116535638 & 0.00139996525751885 & 0.000424273152560514
      \\ 
            & 1.6 & 0.0094828037569356 & 0.00445940090248733 & 0.00160177689724888
      \\ 
            & 1.8 & 0.00387916558309527 & 0.00354865497469016 & 0.000159946285588918
      \\ 
      \bottomrule
\end{tabular}
\end{minipage}%
\hfill%
\begin{minipage}{0.45\linewidth}
  \caption{Standard deviation based on $n = \num{10000}$, $k = 2$,
    $p = -0.4$, $\nu = 0.1$ and $\sigma_0 = 0.3$ for the
      minimal contrast estimator}
  \centering%
\label{tab:std-est}
\begin{tabular}{l l *{3}{S[group-separator={}]}}
  \toprule
  $H_0$ & $\alpha_0$ & \multicolumn{1}{c}{$\sigma_n$} & \multicolumn{1}{c}{$\alpha_n$} & \multicolumn{1}{c}{$H_n(p, k)$}
  \\
  \midrule
  0.2 & \textbf{0.4} & 4.46797947671098 & 0.275947548266881 & 0.0693320825070079
  \\ 
        & 0.6 & 0.0983358032785961 & 0.0291980560647043 & 0.0559629374113014
  \\ 
        & 0.8 & 0.106163290012163 & 0.031320459313742 & 0.0509529732068706
  \\ 
        & 1 & 0.064223544552912 & 0.0236168575429189 & 0.0498575722360116
  \\ 
        & 1.2 & 0.06841379216024 & 0.0354330877365283 & 0.0484144031589578
  \\ 
        & 1.4 & 0.0699040094740681 & 0.0506095522787539 & 0.0489079093603632
  \\ 
        & 1.6 & 0.0711233438988914 & 0.0665727649509567 & 0.0478850607168149
  \\ 
        & 1.8 & 0.0653885121831966 & 0.0678379662793237 & 0.0475714410663651
  \\ 
  \midrule
  0.4  & \textbf{0.4} & 2.23704945227845 & 0.145846014863326 & 0.0704061444268706
  \\ 
        & \textbf{0.6} & 0.082583235940031 & 0.0230401353761834 & 0.0628203182831885
  \\ 
        & 0.8 & 0.0933285570692582 & 0.0305367129337971 & 0.0508814709042337
  \\ 
        & 1 & 0.0576835912301131 & 0.0309469542041297 & 0.0504273798534364
  \\ 
        & 1.2 & 0.0574360812636387 & 0.0393416409282213 & 0.0493747514718563
  \\ 
        & 1.4 & 0.0647448363188212 & 0.0619849049891934 & 0.0465175578471279
  \\ 
        & 1.6 & 0.081236573124305 & 0.0958767116513674 & 0.0483952940107769
  \\ 
        & 1.8 & 0.0824867427964494 & 0.116808098809584 & 0.0479100228621468
  \\ 
  \midrule
  0.6  & \textbf{0.4} & 0.695366733566539 & 0.0689471076134367 & 0.0713122955094812
  \\ 
        & \textbf{0.6} & 0.101373344245252 & 0.0225925446197606 & 0.0576209966275316
  \\ 
        & 0.8 & 0.0755640900055119 & 0.0289988575832692 & 0.050610715577751
  \\ 
        & 1 & 0.0589368264797922 & 0.0327368039799709 & 0.0480606835686756
  \\ 
        & 1.2 & 0.0545605075596676 & 0.0445599063563097 & 0.0480099126523805
  \\ 
        & 1.4 & 0.0710158344236858 & 0.0748553940914841 & 0.0479244728781267
  \\ 
        & 1.6 & 0.0955363340999473 & 0.123813056530805 & 0.0461690775291001
  \\ 
        & 1.8 & 0.0956950661837132 & 0.146903933061165 & 0.0469013703058911
  \\ 
  \midrule
  0.8 & \textbf{0.4} & 0.35706860656807 & 0.050955972228937 & 0.0748942564194485
  \\ 
        & \textbf{0.6} & 0.129032363877391 & 0.0256380001422129 & 0.0596259711432178
  \\ 
        & \textbf{0.8} & 0.0524018510469036 & 0.0244656219392922 & 0.0514832475564527
  \\ 
        & 1 & 0.0645263710298586 & 0.0388483533246653 & 0.0496843781920851
  \\ 
        & 1.2 & 0.0669693866641396 & 0.0592620758007229 & 0.0474774714961547
  \\ 
        & 1.4 & 0.0845184424365827 & 0.0950586615371772 & 0.0469918991675485
  \\ 
        & 1.6 & 0.111875247870183 & 0.151207466903356 & 0.0481624620634028
  \\ 
        & 1.8 & 0.0987359819888986 & 0.162050484618398 & 0.0508444751267591
  \\ 
  \bottomrule
  \end{tabular}
\end{minipage}
\end{table}

\begin{table}[htpb!]
  \begin{minipage}{0.45\linewidth}
    \caption{Absolute value of bias based on $n = \num{10000}$,
      $k = 1$, $p = -0.4$, $\nu = 0.1$ and $\sigma_0 = 0.3$}
    \label{tab:abs-bias-est-1}
    \begin{tabular}{l l *{3}{S[group-separator={}]}}
      \toprule
      $H_0$ & $\alpha_0$ & \multicolumn{1}{c}{$\sigma_n$} & \multicolumn{1}{c}{$\alpha_n$} & \multicolumn{1}{c}{$H_n(p, k)$}
      \\
      \midrule
      0.2  & \textbf{0.4} & 0.020159582477903 & 0.0176570880403363 & 0.187253280294767
      \\ 
            & \textbf{0.6} & 0.0671346849184927 & 0.0001448626648684 & 0.132858880135923
      \\ 
            & \textbf{0.8} & 0.0256298998370676 & 0.00148057499652728 & 0.0966038278908169
      \\ 
            & \textbf{1} & 0.00436145371956465 & 0.00281406841474508 & 0.0745335743715258
      \\ 
            & \textbf{1.2} & 0.00256130877306552 & 0.00177956499346869 & 0.0589043056921438
      \\ 
            & 1.4 & 0.0096720772946569 & 0.00653126597799948 & 0.0465190791043708
      \\ 
            & 1.6 & 0.01608757625496 & 0.0134982875370081 & 0.0375678690938378
      \\ 
            & 1.8 & 0.0215894718172951 & 0.0269865302518468 & 0.0285928493137589
      \\ 
      \midrule
      0.4  & \textbf{0.4} & 0.0655631102365738 & 0.00896219764010321 & 0.0823890306285371
      \\ 
            & \textbf{0.6} & 0.0623201428993455 & 0.00166327224374958 & 0.0563757169821656
      \\ 
            & \textbf{0.8} & 0.0201151404801131 & 0.00201927820024862 & 0.0393528006598102
      \\ 
            & \textbf{1} & 0.00165823578739779 & 0.00370795039678708 & 0.0277768836667843
      \\ 
            & \textbf{1.2} & 0.00759232951034614 & 0.00521893519988822 & 0.0201639444819741
      \\ 
            & \textbf{1.4} & 0.0108918359495488 & 0.0088724372687711 & 0.012832476633058
      \\ 
            & \textbf{1.6} & 0.0229389048531876 & 0.0270602128008938 & 0.0066033223099742
      \\ 
            & 1.8 & 0.0177485444820319 & 0.0288428378930323 & 0.00484121020058942
      \\ 
      \midrule
      0.6  & \textbf{0.4} & 0.0658795671597247 & 0.0109547853093714 & 0.0108922690016445
      \\ 
            & \textbf{0.6} & 0.0723948465696135 & 0.0018343839087213 & 0.0114431506547425
      \\ 
            & \textbf{0.8} & 0.0253962159112109 & 0.00216319234692184 & 0.00246032404862595
      \\ 
            & \textbf{1} & 0.00499022788420563 & 0.0100998294678863 & 0.000933549215541333
      \\ 
            & \textbf{1.2} & 0.0131316935030757 & 0.0126862018808553 & 0.00018371423047418
      \\ 
            & \textbf{1.4} & 0.0188901893883948 & 0.0193540381488496 & 0.00270530038529816
      \\ 
            & \textbf{1.6} & 0.00826733223178444 & 0.0166739373915706 & 0.000427747214895326
      \\ 
            & \textbf{1.8} & 0.0241377537987666 & 0.0456223364196561 & 0.00138660912769006
      \\ 
      \midrule
      0.8  & \textbf{0.4} & 0.0812802652577354 & 0.0115908006162218 & 0.118350108356732
      \\ 
            & \textbf{0.6} & 0.0936543851014804 & 0.0038912244733965 & 0.0825446122227345
      \\ 
            & \textbf{0.8} & 0.0343047745421472 & 0.0117303933858765 & 0.0498897580510652
      \\ 
            & \textbf{1} & 0.00328150394616429 & 0.0194138722459878 & 0.0258017783497017
      \\ 
            & \textbf{1.2} & 0.00351005208081104 & 0.0120071465407031 & 0.000489806100466626
      \\ 
            & \textbf{1.4} & 0.00117348526915177 & 0.0196916371855729 & 0.00337185448877478
      \\ 
            & \textbf{1.6} & 0.0161876697205415 & 0.0438620319326012 & 0.00544310736208477
      \\ 
            & \textbf{1.8} & 0.0213795273901419 & 0.0570517299392745 & 0.00344456071322175
      \\ 
      \bottomrule
    \end{tabular}
  \end{minipage}\hfill
  \begin{minipage}{0.45\linewidth}
    \caption{Standard deviation based on $n = \num{10000}$, $k = 1$,
      $p = -0.4$, $\nu = 0.1$ and $\sigma_0 = 0.3$}
    \label{tab:std-est-1}
    \begin{tabular}{l l *{3}{S[group-separator={}]}}
      \toprule
      $H_0$ & $\alpha_0$ & \multicolumn{1}{c}{$\sigma_n$} & \multicolumn{1}{c}{$\alpha_n$} & \multicolumn{1}{c}{$H_n(p, k)$}
      \\
      \midrule
      0.2  & \textbf{0.4} & 0.653775926984573 & 0.0803104910567001 & 0.0655384837968519
      \\ 
            & \textbf{0.6} & 0.0681277727513606 & 0.0246536314906187 & 0.0548321245922
      \\ 
            & \textbf{0.8} & 0.0598855372372643 & 0.0239131340141106 & 0.0500948074026087
      \\ 
            & \textbf{1} & 0.069342093244775 & 0.0324140852293537 & 0.047351365394555
      \\ 
            & \textbf{1.2} & 0.0633379501629091 & 0.0422173451256746 & 0.045881040994034
      \\ 
            & 1.4 & 0.0710613483066818 & 0.0598126174067686 & 0.0461283878433596
      \\ 
            & 1.6 & 0.0835445724536796 & 0.0901470849414464 & 0.0457575218504134
      \\ 
            & 1.8 & 0.0840508364618504 & 0.107132667669429 & 0.0437093894253527
      \\ 
      \midrule
      0.4  & \textbf{0.4} & 0.354716861662041 & 0.0625262966020462 & 0.0637753873123791
      \\ 
            & \textbf{0.6} & 0.0768657642797561 & 0.0270656189382835 & 0.0561907111298767
      \\ 
            & \textbf{0.8} & 0.0754050009351268 & 0.0354601592725627 & 0.049351017473884
      \\ 
            & \textbf{1} & 0.0549397642628535 & 0.0408602818513291 & 0.0472775310204837
      \\ 
            & \textbf{1.2} & 0.0636205300651236 & 0.0560306863210051 & 0.0478422627095918
      \\ 
            & \textbf{1.4} & 0.0750702805208725 & 0.0777360608738432 & 0.0451746985688501
      \\ 
            & \textbf{1.6} & 0.100287273105692 & 0.124204675827076 & 0.0458969945114568
      \\ 
            & 1.8 & 0.0685064137135058 & 0.109550814294112 & 0.0440815336381742
      \\ 
      \midrule
      0.6 & \textbf{0.4} & 0.800491828268879 & 0.0619465215938737 & 0.0636553073999221
      \\ 
            & \textbf{0.6} & 0.0667743991168754 & 0.034160643583785 & 0.05437193981272
      \\ 
            & \textbf{0.8} & 0.0722449333224281 & 0.0512287996106751 & 0.0521374206430518
      \\ 
            & \textbf{1} & 0.0695254014094585 & 0.0574537374774565 & 0.0494234639506415
      \\ 
            & \textbf{1.2} & 0.077244885126627 & 0.0716826916201574 & 0.0453388015650444
      \\ 
            & \textbf{1.4} & 0.106467306103928 & 0.119366913192232 & 0.0458464346850768
      \\ 
            & \textbf{1.6} & 0.0776639191067291 & 0.113778645998865 & 0.0440559470371416
      \\ 
            & \textbf{1.8} & 0.0647963589236112 & 0.123782587423012 & 0.0449250667407215
      \\ 
      \midrule
      0.8  & \textbf{0.4} & 0.739830787257985 & 0.0648432244252129 & 0.0659598005341707
      \\ 
            & \textbf{0.6} & 0.104227137707133 & 0.0448364681072762 & 0.0550410992546863
      \\ 
            & \textbf{0.8} & 0.11822951559463 & 0.072810664897135 & 0.0505830808694971
      \\ 
            & \textbf{1} & 0.0845131030028341 & 0.0829641827677943 & 0.0459593221715464
      \\ 
            & \textbf{1.2} & 0.0763548684280778 & 0.0746539852826236 & 0.0443743446875042
      \\ 
            & \textbf{1.4} & 0.0629890760333985 & 0.104248826334257 & 0.0464077638633903
      \\ 
            & \textbf{1.6} & 0.0699625183540019 & 0.138950875156497 & 0.0451905497148409
      \\ 
            & \textbf{1.8} & 0.0601597768227649 & 0.141220427335772 & 0.0419122584020285
      \\ 
      \bottomrule
    \end{tabular}
  \end{minipage}
\end{table}

\begin{table}
  \begin{minipage}[b]{0.45\linewidth}
    \caption{Absolute value of bias for
      $(\widetilde{\sigma}_{\textrm{low}},
      \widetilde{\alpha}_{\textrm{low}})$ for $n = \num{10000}$,
      $p = -0.4$ and $\sigma_0 = 0.3$}
    \label{tab:abs-bias-est-old}
\begin{tabular}{l l *{2}{S[group-separator={}]}}
  \toprule
  $H_0$ & $\alpha_0$ & \multicolumn{1}{c}{$\widetilde{\sigma}_{\textrm{low}}$} & \multicolumn{1}{c}{$\widetilde{\alpha}_{\textrm{low}}$}
  \\
  \midrule
  0.2 & 0.4 & 0.287217572980045 & 0.0371086575027604
  \\ 
        & 0.6 & 0.28602154323148 & 0.173655850248536
  \\ 
        & 0.8 & 0.272458137984446 & 0.381736739034421
  \\ 
        & 1 & 0.239024112849802 & 0.488926440400127
  \\ 
        & 1.2 & 0.0887863529704307 & 0.181839225689885
  \\ 
        & 1.4 & 0.00645961473487409 & 0.00442894355110161
  \\ 
        & 1.6 & 0.00769664776985585 & 0.00802027904385211
  \\ 
        & 1.8 & 0.00471072436754421 & 0.00134944316173926
  \\ 
  \midrule
  0.4 & 0.4 & 0.280459436488355 & 0.036375700397655
  \\ 
        & 0.6 & 0.269284076982186 & 0.160561673103358
  \\ 
        & 0.8 & 0.205309911959336 & 0.293809841059945
  \\ 
        & 1 & 0.0543295733417487 & 0.137449563844791
  \\ 
        & 1.2 & 0.00524579626506824 & 0.00425926440252045
  \\ 
        & 1.4 & 0.00120516557545762 & 0.0010721468718431
  \\ 
        & 1.6 & 0.000481734470633227 & 0.000969352787131544
  \\ 
        & 1.8 & 0.000585327869237347 & 0.000701262135562804
  \\
  \midrule
  0.6 & 0.4 & 0.276431562230691 & 0.0459988795571353
  \\ 
        & 0.6 & 0.228459184667614 & 0.13617557380459
  \\ 
        & 0.8 & 0.175164278393567 & 0.258269810814984
  \\ 
        & 1 & 0.00772639891209914 & 0.0135337995028149
  \\ 
        & 1.2 & 0.00196789091939961 & 0.00227902511157954
  \\ 
        & 1.4 & 0.000134221025545529 & 0.000107850657605655
  \\ 
        & 1.6 & 0.000243251031788892 & 0.000514788419206582
  \\ 
        & 1.8 & 0.000636808368975185 & 1.64377920099549e-05
  \\ 
  \midrule
  0.8  & 0.4 & 0.26879689263687 & 0.0713213318294936
  \\ 
        & 0.6 & 0.138858777991656 & 0.0917349986960825
  \\ 
        & 0.8 & 0.00962234016457732 & 0.028599822115716
  \\ 
        & 1 & 0.00615043644160236 & 0.00734675368202823
  \\ 
        & 1.2 & 0.00127996385176149 & 4.36103423703114e-05
  \\ 
        & 1.4 & 0.000275426312166121 & 0.00232368926213752
  \\ 
        & 1.6 & 0.000100523237334137 & 0.000981428369866796
  \\ 
        & 1.8 & 4.5311136075401e-05 & 0.00199717797742566
  \\ 
  \bottomrule
    \end{tabular}
  \end{minipage}%
  \hfill%
  \begin{minipage}[b]{0.45\linewidth}
    \caption{Standard deviation of
      $(\widetilde{\sigma}_{\textrm{low}},
      \widetilde{\alpha}_{\textrm{low}})$ for $n = \num{10000}$, $p = -0.4$
      and $\sigma_0 = 0.3$}
    \label{tab:std-est-old}
\begin{tabular}{l l *{2}{S[group-separator={}]}}
  \toprule
  $H_0$ & $\alpha_0$ & \multicolumn{1}{c}{$\widetilde{\sigma}_{\textrm{low}}$} & \multicolumn{1}{c}{$\widetilde{\alpha}_{\textrm{low}}$}
  \\
  \midrule
  0.2  & 0.4 & 0.0514493641263876 & 0.224914101004984
  \\ 
        & 0.6 & 0.0420188025000937 & 0.219536634184391
  \\ 
        & 0.8 & 0.0676397059695992 & 0.218078984201489
  \\ 
        & 1 & 0.103586858524759 & 0.254877281466671
  \\ 
        & 1.2 & 0.132183941727461 & 0.263207721998055
  \\ 
        & 1.4 & 0.0999547935785991 & 0.230117226738914
  \\ 
        & 1.6 & 0.0510482066036124 & 0.136158804728185
  \\ 
        & 1.8 & 0.0264360584125776 & 0.0765195902850774
  \\ 
  \midrule
  0.4  & 0.4 & 0.0679797669342322 & 0.212668392981231
  \\ 
        & 0.6 & 0.0741427667333707 & 0.219761011341943
  \\ 
        & 0.8 & 0.152544460131115 & 0.245944190765084
  \\ 
        & 1 & 0.203866870481647 & 0.363293986470934
  \\ 
        & 1.2 & 0.0527577012335206 & 0.0875290925641975
  \\ 
        & 1.4 & 0.0235884027431762 & 0.0450578242897994
  \\ 
        & 1.6 & 0.015141837911502 & 0.0307654522636953
  \\ 
        & 1.8 & 0.0112121970829042 & 0.022233242048084
  \\ 
  \midrule
  0.6 & 0.4 & 0.0734342245948659 & 0.210842136970737
  \\ 
        & 0.6 & 0.152216520788966 & 0.214174891581134
  \\ 
        & 0.8 & 0.178989796636568 & 0.261797746218754
  \\ 
        & 1 & 0.124486608576801 & 0.189574378626174
  \\ 
        & 1.2 & 0.0259325349814443 & 0.037051055660017
  \\ 
        & 1.4 & 0.0148637220482232 & 0.0282569151499648
  \\ 
        & 1.6 & 0.0106408008745295 & 0.0252498973506683
  \\ 
        & 1.8 & 0.00782173194464961 & 0.0203545382747977
  \\ 
  \midrule
  0.8  & 0.4 & 0.0852770396327795 & 0.204132605421412
  \\ 
        & 0.6 & 0.285563236286601 & 0.232943615269778
  \\ 
        & 0.8 & 0.239870506454797 & 0.266382185164036
  \\ 
        & 1 & 0.0659177342284909 & 0.0793312719446012
  \\ 
        & 1.2 & 0.0201079626048648 & 0.0306230833362863
  \\ 
        & 1.4 & 0.0118226860359186 & 0.0280918016281635
  \\ 
        & 1.6 & 0.00842885161694 & 0.0254647018225799
  \\ 
        & 1.8 & 0.006386068798862 & 0.0208089201594853
  \\
  \bottomrule
\end{tabular}
  \end{minipage}
\end{table}

\begin{table}[htpb!]
  \centering
  \caption{Failure rates for $n = \num{10000}$, $p = -0.4$, $k = 2$,
    $\nu = 0.1$ and $\sigma_0 = 0.3$}
  \label{tab:failure-rate-old}
  \begin{tabular}{l l *{2}{S[table-format=2.2, round-precision = 2]}}
    \toprule
    & & \multicolumn{2}{c}{Failure rate (\%)}
    \\
    \cmidrule{3-4}
    $H_0$ & $\alpha_0$ & \multicolumn{1}{c}{$(\widetilde{\sigma}_{\textrm{low}}, \widetilde{\alpha}_{\textrm{low}}, \widetilde{H}_{\textrm{low}})$} & \multicolumn{1}{c}{$\xi_n$}
    \\
    \midrule
    0.2 & 0.4 & 88.17 & 14.83\\
    & 0.6 & 84.08 & 0\\
    & 0.8 & 69.67 & 0\\
    & 1 & 38.58 & 0\\
    & 1.2 & 0.42 & 0\\
    & 1.4 & 3.25 & 0.17\\
    & 1.6 & 3.83 & 1\\
    & 1.8 & 1.58 & 2.25\\
    \midrule
    0.4 & 0.4 & 86.08 & 20.17\\
    & 0.6 & 76.75 & 0.17\\
    & 0.8 & 62.58 & 0\\
    & 1 & 19.17 & 0.18\\
    & 1.2 & 0 & 0\\
    & 1.4 & 0 & 0.33\\
    & 1.6 & 0 & 1.42\\
    & 1.8 & 0 & 7.17\\
    \midrule
    0.6 & 0.4 & 87 & 28.67\\
    & 0.6 & 72.67 & 0\\
    & 0.8 & 41.75 & 0\\
    & 1 & 0.08 & 0\\
    & 1.2 & 0 & 0\\
    & 1.4 & 0 & 0.08\\
    & 1.6 & 0 & 2.42\\
    & 1.8 & 0 & 10.25\\
    \midrule
    0.8 & 0.4 & 89.5 & 26.33\\
    & 0.6 & 69.33 & 0.33\\
    & 0.8 & 2.17 & 0\\
    & 1 & 0.08 & 0.25\\
    & 1.2 & 0 & 0\\
    & 1.4 & 0 & 0.17\\
    & 1.6 & 0 & 3.92\\
    & 1.8 & 0 & 11\\
    \bottomrule
  \end{tabular}
\end{table}

\clearpage

\bibliographystyle{plain}
\phantomsection
\addcontentsline{toc}{section}{\refname}
\bibliography{References}

\end{document}